\theoremstyle{thmstyleone}%
\newtheorem{theorem}{Theorem}[section]
\newtheorem{lemma}{Lemma}[section]%
\newtheorem{corollary}{Corollary}[section]%
\theoremstyle{thmstyletwo}%
\theoremstyle{thmstylethree}%
\newtheorem{definition}{Definition}[section]%
\numberwithin{equation}{section}
\begin{document}

\title[Article Title]{Transposed Poisson structures on the $q$-analog  Virasoro-like algebras and $q$-Quantum Torus Lie algebras
}


\author*[1]{\fnm{Jie} \sur{Lin}}\email{linj022@126.com}

\author[2]{\fnm{Chengyu} \sur{Liu}}\email{2563253950@qq.com}

\author[2]{\fnm{Jingjing} \sur{Jiang}}\email{745876258@qq.com}

\affil*[1]{\orgdiv{Sino-European Institute of Aviation Engineering }, \orgname{Civil Aviation University of China},  \city{Tianjin}, \postcode{300300},  \country{China}}

\affil[2]{\orgdiv{College of Science}, \orgname{Civil Aviation University of China}, \city{Tianjin}, \postcode{300300},  \country{China}}



\abstract{We investigate the transposed Poisson structures on both the $q$-analog Virasoro-like algebra and $q$-quantum torus Lie algebra considering the cases where $q$ is generic and where $q$ is a primitive root of unity, respectively. We establish the following results: When $q$ is generic, there are no non-trivial $\frac{1}{2}$-derivations and consequently, no non-trivial transposed Poisson algebra structures exist on the $q$-analog Virasoro-like algebra. Meanwhile, the $q$-quantum torus Lie algebra does possess non-trivial $\frac{1}{2}$-derivations but lacks of a non-trivial transposed Poisson structure. When $q$ is a primitive root of unity, both the $q$-analog Virasoro-like algebra and the $q$-quantum torus Lie algebra possess non-trivial  $\frac{1}{2}$-derivations. We present the non-trivial transposed Poisson algebra structure for the $q$-analog Virasoro-like algebra. However, the $q$-quantum torus Lie algebra lacks of a non-trivial transposed Poisson structure.  }

\keywords{Transposed Poisson structure, $\frac{1}{2}$-derivation, $q$-analog of Virasoro-like algebra, $q$-Quantum Torus Lie algebra}



\maketitle

\section{Introduction}\label{sec1}
Poisson algebras play an important role in numerous mathematical and physical domains, including Poisson manifolds, algebraic geometry, operads, quantization theory, quantum groups, classical mechanics, and quantum mechanics. 
 The study of all possible Poisson structures with a certain Lie or associative part is an important problem in the theory of Poisson algebras. Recently, a dual notion of the Poisson algebra (transposed Poisson algebra) by exchanging the roles of the two binary operations in the Leibniz rule defining the Poisson algebra has been introduced by Bai, etc in \cite{bib3}. 
Immediately, Ferreira, Kaygorodov and Lopatkin (see \cite{bib6}) established a connection between $\frac{1}{2}$-derivations of Lie algebras and transposed Poisson algebras, offering a method to identify all transposed Poisson structures associated with a specific Lie algebra. Utilizing this methodology, they investigated transposed Poisson structures across various Lie algebras, including the Witt algebra (see \cite{bib13}), Virasoro algebra, generalized Witt Lie algebra, Block Lie algebras and superalgebras (see \cite{bib14}), solvable Lie algebra, Lie algebra of upper triangular matrices and others (see \cite{bib12}\cite{bib16}\cite{bib15}). In \cite{bib25}, Zeng, Sun and Zhang  studied transposed Poisson structures on Virasoro-type (super)algebras. Furthermore, a series of open questions in the future study of transposed Poisson structures are listed in \cite{bib5}.

 The $q$-analog Virasoro like algebra can be regarded as a $q$-deformation of the Virasoro-like algebra introduced and studied by Arnold etc (see\cite{bib1})  when they try to generalize the Virasoro algebra . There are some papers devoted to the study of structure and representations of the $q$-analog Virasoro algebra. Jiang and Meng studied its derivation Lie algebra and the automorphism group of its derivation Lie algebra (see \cite{bib9}). Zhao and Rao constructed a class of highest weight irreducible $\mathbb Z$-graded modules, and gave a sufficient and necessary condition for such a module with finite dimensional homogeneous subspaces (see \cite{bib29}).  Zhang and Zhao (see \cite{bib28}) studied the representations of the Virasoro-like algebra and its $q$-analog. Gao constructed a class of principal vertex representations for the extended affine Lie algebras coordinatized by certain quantum tori by using the representation of $q$-analog Virasoro-like algebra in \cite{bib8}. When $q$ is generic, another $q$-analog Virasoro-like algebra is introduced by Kirkman etc in \cite{bib10}. It can be realized as the universal central extension of the inner derivation Lie algebra of the quantum torus $\mathbb{C}_{q}\left[x^{\pm 1},y^{\pm1}\right]$ (see {\cite{bib2}).
  ~The quantum torus is one of the main objects in noncommutative geometry, and plays an important role in the classification of extended affine Lie algebras.  In \cite{bib27}, Zheng and Tan studied a Lie algebra obtained by semi-direct product of $q$-analog Virasoro-like algebra and quantum torus $\mathbb{C}_{q}\left[x^{\pm 1},y^{\pm1}\right]$, which called quantum toroidal Lie algebra.
  ~  Zeng  studied the automorphism groups, derivative algebras and universal coverage of $q$-quantum torus Lie algebra with $q$ a root of unity (see \cite{bib30}) which is the derived Lie subalgebra studied by Zheng and Tan.

The structure of the present artical is as follows. In Sec.\ref{sec2}, we recall some definitions and known results for studying transposed Poisson structures. In Sec.\ref{sec3}, we  prove that  when $q$ is generic, the $q$-analog Virasoro-like algebra  admits neither nontrivial $\frac{1}{2}$-derivations nor nontrivial transposed Poisson algebra structures; When $q$ is a primitive root of unity,  we provide a comprehensive discriptions of transposed Poisson algebra structures on the $q$-analog Virasoro-like algebra. In Sec.\ref{sec4}, we proved that the $q$-quantum torus Lie algebra possess nontrivial $\frac{1}{2}$-derivations, but lacks of nontrivial transposed Poisson algebra structures regardless of whether $q$ is a root of unity or not.

\section{Preliminaries}\label{sec2}

 Throughout this paper, all algebras and vector spaces are considered over the complex field.
\begin{definition} \label{df1}\cite{bib1}
Let $L$ be a vector space equipped with two nonzero bilinear operations $\cdot$ and $\left[\cdot,\cdot\right]$. The triple $\left(L,\cdot ,\left[\cdot,\cdot\right]\right)$ is
called a transposed Poisson algebra if $\left (L,\cdot\right)$ is a commutative associative
algebra and $\left(L,\left[\cdot,\cdot\right]\right)$ is a Lie algebra that satisfies the following compatibility condition
\begin{equation}\label{eq: 2.1}
2z\cdot\left [x,y\right ]=\left[ z\cdot x,y\right ]+\left[x,z\cdot y\right],~\forall~x,y,z\in L.
\end{equation}
\end{definition}

\begin{definition} \label{df2}\cite{bib5}
Let $\left(L,\left[\cdot,\cdot\right]\right)$ be a Lie algebra. A transposed Poisson structure on $\left(L,\left[\cdot,\cdot\right]\right)$ is a commutative associative multiplication $\cdot$ on $L$ which makes $\left(L,\cdot ,\left[\cdot,\cdot\right]\right)$ a transposed Poisson algebra.
\end{definition}

The study of $\delta$-derivations of Lie algebras was initiated in the papers \cite{bib7,bib11,bib23}. Now we recall the definition of $\frac{1}{2}$-derivation of a Lie algebra.

\begin{definition} \label{A} \cite{bib6}
Let $\left(L,\left[\cdot,\cdot\right]\right)$ be a Lie algebra, $\varphi$ : $L\longrightarrow L$ be a linear map. $\varphi$  is called a $\frac{1}{2}$-derivation
if it satisfies
$$\varphi\left(\left[x,y\right]\right)=\frac{1}{2}\left(\left[\varphi\left(x\right ),y\right]+\left[x,\varphi\left(y\right)\right]\right),~\forall~x,y\in L.$$
\end{definition}

\begin{lemma}\cite{bib3} \label{Q}
Let $\left(L,\left[\cdot,\cdot\right]\right)$ be a Lie algebra and $\cdot$ a new binary (bilinear) operation on L. Then $\left(L,\cdot ,\left[\cdot,\cdot\right]\right)$ is a transposed Poisson algebra if and only if $\cdot$ is commutative and associative and for every $z\in L$ the multiplication by $z$ in $\left(L,\cdot\right)$ is a $\frac{1}{2}$-derivation of $\left(L,\left[\cdot,\cdot\right]\right)$.
\end{lemma}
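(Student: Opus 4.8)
The plan is to prove the equivalence by directly translating Definition~\ref{df1} and Definition~\ref{A} into one another, using a reformulation of the compatibility identity \eqref{eq: 2.1} in terms of left-multiplication operators. For a fixed $z\in L$, I would introduce the map $\varphi_z\colon L\to L$, $\varphi_z(x):=z\cdot x$; this is linear precisely because $\cdot$ is bilinear in its second argument. The key observation is that, for this fixed $z$, the identity \eqref{eq: 2.1} reads
\[
2\varphi_z\bigl([x,y]\bigr)=\bigl[\varphi_z(x),y\bigr]+\bigl[x,\varphi_z(y)\bigr],\qquad \forall\,x,y\in L,
\]
which, after dividing by $2$, is exactly the statement that $\varphi_z$ is a $\tfrac12$-derivation of $(L,[\cdot,\cdot])$ in the sense of Definition~\ref{A}. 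Everything else is bookkeeping around this identification.

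For the forward direction, I would assume $(L,\cdot,[\cdot,\cdot])$ is a transposed Poisson algebra. Then by Definition~\ref{df1}, $(L,\cdot)$ is commutative and associative and \eqref{eq: 2.1} holds for all $x,y,z\in L$. Fixing an arbitrary $z$ and invoking the displayed reformulation shows that $\varphi_z$ is a $\tfrac12$-derivation; since $z$ was arbitrary, multiplication by every $z$ is a $\tfrac12$-derivation.

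For the converse, I would suppose that $\cdot$ is commutative and associative and that $\varphi_z$ is a $\tfrac12$-derivation for every $z\in L$. Commutativity and associativity of $(L,\cdot)$ are part of the hypothesis, and $(L,[\cdot,\cdot])$ is a Lie algebra by assumption, so only the compatibility condition \eqref{eq: 2.1} remains to be checked. But for each fixed $z$ the $\tfrac12$-derivation identity for $\varphi_z$, multiplied by $2$, is exactly $2z\cdot[x,y]=[z\cdot x,y]+[x,z\cdot y]$; letting $z$ range over all of $L$ yields \eqref{eq: 2.1} in full, so $(L,\cdot,[\cdot,\cdot])$ is a transposed Poisson algebra.

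I do not expect a genuine obstacle here: the argument is a faithful unwinding of definitions, and the only point demanding a little care is the matching of quantifiers — one must verify that ``$\varphi_z$ is a $\tfrac12$-derivation for \emph{every} $z$'' corresponds to the ``$\forall\,z$'' appearing in \eqref{eq: 2.1}, not to a single distinguished $z$. (If one wishes to honour the non-degeneracy clause of Definition~\ref{df1}, one may simply note that the identically zero multiplication is excluded on both sides of the equivalence, so the statement is unaffected.)
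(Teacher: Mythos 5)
Your argument is correct: the compatibility identity \eqref{eq: 2.1} for a fixed $z$ is literally the $\tfrac12$-derivation condition for the left-multiplication map $\varphi_z$, and the paper (which cites this lemma from \cite{bib3} without reproving it) relies on exactly this definitional unwinding. Your remark on the quantifier over $z$ and on the nonzero-operation clause of Definition~\ref{df1} covers the only points needing care, so nothing is missing.
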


The basic example of a $\frac{1}{2}$-derivation is $\alpha{\rm id},$ where $\alpha$ is a field element. Such $\frac{1}{2}$-derivations are termed trivial.

\begin{lemma} \label{C} \cite{bib4}
Let $L$ be a Lie algebra without non-trivial $\frac{1}{2}$-derivations. Then every transposed Poisson structure defined on $L$ is trivial.
\end{lemma}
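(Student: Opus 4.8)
The plan is to invoke Lemma \ref{Q} to convert the hypothesis about $\tfrac12$-derivations into a statement about the associative product, and then use commutativity of that product to force it to vanish.

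First, let $\cdot$ be an arbitrary transposed Poisson structure on $\left(L,\left[\cdot,\cdot\right]\right)$. By Lemma \ref{Q}, for each fixed $z\in L$ the left-multiplication operator $\mathfrak{m}_z\colon L\to L$, $\mathfrak{m}_z(x)=z\cdot x$, is a $\tfrac12$-derivation of $\left(L,\left[\cdot,\cdot\right]\right)$. Since by hypothesis $L$ has no non-trivial $\tfrac12$-derivations, there is a scalar $\lambda(z)\in\mathbb{C}$ with $\mathfrak{m}_z=\lambda(z)\,\mathrm{id}$; equivalently $z\cdot x=\lambda(z)\,x$ for all $x\in L$. (Bilinearity of $\cdot$ makes $\lambda$ linear, but the argument will not need this.)

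Next I would exploit commutativity of $\cdot$: for all $x,z\in L$,
$$\lambda(z)\,x = z\cdot x = x\cdot z = \lambda(x)\,z.$$
When $\dim L\ge 2$, given any $z$ one may pick $x\in L$ linearly independent from $z$, and the relation $\lambda(z)\,x=\lambda(x)\,z$ forces $\lambda(z)=\lambda(x)=0$; as $z$ was arbitrary, $\lambda\equiv 0$, so $z\cdot x=0$ for all $x,z$ and the transposed Poisson structure is trivial. (The only excluded case, $\dim L=1$, is irrelevant to the infinite-dimensional Lie algebras treated here; there the bracket is zero and any commutative associative product yields a trivial transposed Poisson algebra regardless.) Note that associativity of $\cdot$ — and the compatibility condition \eqref{eq: 2.1} — play no role beyond being automatically satisfied once $\cdot=0$.

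The argument is short, and I do not anticipate a genuine obstacle. The one point that requires care, and the single idea that drives the proof, is that reducing each left-multiplication $\mathfrak{m}_z$ to a scalar is \emph{not} by itself enough to make the product vanish: one must feed the resulting identity $\lambda(z)x=\lambda(x)z$ back through commutativity and a dimension count to kill all the scalars simultaneously. Handling (or explicitly dismissing) the one-dimensional degenerate case is the only other bookkeeping needed.
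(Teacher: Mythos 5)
Your proof is correct. The paper gives no proof of Lemma \ref{C} at all --- it is quoted from \cite{bib4} --- and your argument is exactly the standard one behind that citation: by Lemma \ref{Q} every multiplication operator $\mathfrak{m}_z$ is a $\frac{1}{2}$-derivation, hence by hypothesis a scalar multiple of the identity, and feeding $z\cdot x=\lambda(z)x$ back through commutativity to get $\lambda(z)x=\lambda(x)z$ for linearly independent $x,z$ kills all the scalars, so the product vanishes identically. The only imprecise spot is the parenthetical on $\dim L=1$: there a product such as $e\cdot e=e$ would \emph{not} be trivial in the sense of a zero multiplication, so the justification ``any commutative associative product yields a trivial transposed Poisson algebra'' is not quite right; the case is nonetheless harmless, both because Definition \ref{df1} requires the bracket to be nonzero (which fails in dimension one, making the case vacuous) and because it is irrelevant to the infinite-dimensional algebras considered in this paper.
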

Let $G$ be an abelian group, $L=\underset{g \in G}{\oplus} L_{g}$ be a $G$-graded Lie algebra. We say that a $\frac{1}{2}$-derivation $\varphi$ has degree $g$ $(\operatorname{deg}(\varphi)=g)$ if $\varphi\left(L_{h}\right) \subseteq L_{g+h}$. Let $\Delta(L)$ denote the space of $\frac{1}{2}$-derivations of $L$ and
write $\Delta_{g}(L)=\{\varphi \in \Delta(L) \mid \operatorname{deg}(\varphi)=g\}$ for $g\in G$. The following  lemma is useful in our work.

\begin{lemma} \label{B}
Let $G$ be an abelian group, $L=\underset{g \in G}{\oplus} L_{g}$ be a finitely generated G-graded Lie algebra. Then $\Delta(L)=\underset{g \in G}{\oplus}\Delta_{g}(L).$
\end{lemma}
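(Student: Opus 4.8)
The plan is to decompose an arbitrary $\frac{1}{2}$-derivation into homogeneous pieces and then use finite generation to show that only finitely many pieces survive. First I would observe that for $\varphi\in\Delta(L)$ and a homogeneous $x\in L_h$, the element $\varphi(x)$ lies in $L=\bigoplus_{g\in G}L_g$, so it has a unique expansion $\varphi(x)=\sum_{g\in G}\varphi_g(x)$ with $\varphi_g(x)\in L_{g+h}$ and only finitely many nonzero terms. Extending each $\varphi_g$ linearly to all of $L$ produces linear maps $\varphi_g:L\to L$ with $\varphi_g(L_h)\subseteq L_{g+h}$, and $\varphi=\sum_g\varphi_g$ holds pointwise (a locally finite sum).

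The next step is to verify that each $\varphi_g$ is itself a $\frac{1}{2}$-derivation. For homogeneous $x\in L_a$, $y\in L_b$ one compares the $L_{g+a+b}$-components on both sides of $\varphi([x,y])=\tfrac12\left([\varphi(x),y]+[x,\varphi(y)]\right)$: since $[\varphi_g(x),y],[x,\varphi_g(y)]\in L_{g+a+b}$ and translation by $a+b$ permutes $G$, this gives $\varphi_g([x,y])=\tfrac12\left([\varphi_g(x),y]+[x,\varphi_g(y)]\right)$, which extends to all $x,y\in L$ by bilinearity; hence $\varphi_g\in\Delta_g(L)$. The same component-separation argument shows the sum $\sum_g\Delta_g(L)$ is direct: if $\sum_g\psi_g=0$ with $\psi_g\in\Delta_g(L)$, then evaluating on a homogeneous element puts the $\psi_g(x)$ into pairwise distinct homogeneous components, forcing every $\psi_g=0$.

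It remains to show $\varphi_g=0$ for all but finitely many $g$, which is where finite generation is used. A finite Lie-algebra generating set of $L$ may be replaced by the (still finite) collection of homogeneous components of its members, so fix homogeneous generators $u_1,\dots,u_n$. The key auxiliary fact is that a $\frac{1}{2}$-derivation annihilating $u_1,\dots,u_n$ annihilates all of $L$: indeed $L$ is spanned by iterated brackets of the $u_i$, and if $\psi(u_i)=0$ for every $i$, then an induction on bracket length via $\psi([w,u_i])=\tfrac12\left([\psi(w),u_i]+[w,\psi(u_i)]\right)=\tfrac12[\psi(w),u_i]$ propagates the vanishing. Now set $S=\{g\in G:\varphi_g(u_i)\neq 0\text{ for some }i\}$; since each $\varphi(u_i)$ has only finitely many nonzero homogeneous components, $S$ is finite, and for $g\notin S$ the $\frac{1}{2}$-derivation $\varphi_g$ kills every $u_i$, hence $\varphi_g=0$. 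Therefore $\varphi=\sum_{g\in S}\varphi_g\in\bigoplus_{g\in G}\Delta_g(L)$, giving $\Delta(L)\subseteq\bigoplus_{g\in G}\Delta_g(L)$; the reverse inclusion is immediate.

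I expect the main obstacle to be the auxiliary vanishing lemma: one must be careful that iterated brackets of the homogeneous generators actually span $L$ and that the inductive step is formulated correctly, and one must also justify cleanly the passage from an arbitrary finite generating set to a finite set of homogeneous generators. Everything else is bookkeeping with graded components.
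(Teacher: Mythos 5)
Your proposal is correct and follows essentially the same route as the paper: decompose $\varphi$ into graded components (your $\varphi_g$ is exactly the paper's $\sum_{h}\pi_{g+h}\circ\varphi\circ\pi_h$), verify each component is a $\frac{1}{2}$-derivation by comparing homogeneous parts, and use finite generation to reduce to a finite sum. The only cosmetic difference is that you homogenize the generating set and kill each $\varphi_g$ outside a finite set via the explicit ``vanishes on generators $\Rightarrow$ vanishes everywhere'' lemma, while the paper keeps the original finite generating set $S$ and shows $\varphi$ coincides with the finite subsum $\sum_{q\in T}\varphi_q$ on $S$, invoking the same determination-by-generators principle implicitly.
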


\begin{proof}
For each element $g\in G$, we denote by $\pi_{g}:L\to L_{g}$ the canonical projection.  Let $\varphi:L\to L$ be a $\frac{1}{2}$ derivation.
For $g\in G$, define $\varphi _{g}:=\underset{h\in G}{\Sigma} \pi _{g+h}\circ\varphi \circ \pi _{h}$. Let $h,k\in G,$ since for
$x _{h}\in L_{h}$ and $x _{k}\in L_{k}$, we have

$$
\begin{aligned}
\varphi _{g}\left(\left[x_{h}, x_{k}\right]\right)
&=\underset{h\in G}{\Sigma} \pi _{g+h}\circ\varphi \circ \pi _{h} \left(\left[x_{h}, x_{k}\right]\right) \\
&=\pi _{g+h+k}\circ\varphi \left(\left[x_{h}, x_{k}\right]\right) \\
&=\frac{1}{2} \pi _{g+h+k} \left(\left[\varphi \left ( x_{h} \right ) , x_{k}\right]\right)+
\frac{1}{2} \pi _{g+h+k}\left(\left[x_{h},\varphi \left ( x_{k} \right ) \right]\right)\\
&=\frac{1}{2} \left[\pi _{g+h}\left(\varphi\left(x_{h}\right)\right), x_{k}\right]+
\frac{1}{2} \left[x_{h},\pi _{g+k}\left(\varphi\left(x_{k}\right)\right)\right]\\
&=\frac{1}{2} \left[\varphi _{g} \left(x_{h}\right), x_{k}\right]+
\frac{1}{2} \left[x_{h},\varphi _{g}\left(x_{k}\right)\right],
\end{aligned}
$$
consequently, $\varphi _{g}\in\Delta(L)$.

Since $L$ is finitely generated, there is a finite subset
$S\subset L$ generating $L$. Then there exist finite sets $Q,R\subset G$ such that
$S\subset \underset{g\in Q}{\Sigma} L_{g}$ and $\varphi\left (S\right)\subset\underset{g\in R}{\Sigma} L_{g}$.
Let $T=\left \{ g-h\mid h\in Q,g\in R \right \}$. Then $T$ is finite and we obtain, for $y\in S$
$$
\begin{aligned}
\varphi \left ( y \right ) &=\underset{g\in R}{\Sigma}\pi_{g} \circ\varphi  \left ( y \right ) \\
&=\underset{g\in R}{\Sigma} \pi _{g} \circ\varphi
\circ\left(\underset{h\in Q}{\Sigma}\pi_{h}\left(y\right)\right) \\
&=\underset{g\in R}{\Sigma} \underset{h\in Q}{\Sigma}\pi _{g} \circ\varphi
\circ \pi_{h}\left ( y \right )  \\
&=\underset{h\in Q}  {\Sigma} \left ( \underset {g\in R}{\Sigma}\pi _{g-h+h} \circ\varphi
\circ \pi_{h}\left ( y \right ) \right ) \\
&=\underset{h\in Q}{\Sigma} \left ( \underset{q\in T}{\Sigma}\pi _{q+h} \circ\varphi
\circ \pi_{h}\left ( y \right ) \right ) \\
&= \underset{q\in T}{\Sigma}\underset{h\in Q}{\Sigma} \pi _{q+h} \circ\varphi
\circ \pi_{h}\left ( y \right ) \\
&= \underset{q\in T}{\Sigma}\underset{h\in G}{\Sigma} \pi _{q+h} \circ\varphi
\circ \pi_{h}\left ( y \right ) \\
&=\underset{q\in T}{\Sigma}\varphi _{q}\left ( y \right ).
\end{aligned}
$$
This shows that the $\frac{1}{2}$ derivations $\varphi$ and $\underset{q\in T}{\Sigma}\varphi_{q}$ coincide on $S$. As $S$ generates $L$, we obtain $\varphi=\underset{q\in T}{\Sigma}\varphi_{q}$. This proves the assertion.
\end{proof}

Throughout this paper, we denote by $\mathbb{C}$, $\mathbb{Z}$, $\mathbb{N}$ the sets of all complex numbers, all integers and all positive integers, respectively,
and $\mathbb{C}^{*}=\mathbb{C}\setminus{\left \{ 0\right \} }$, $\mathbb{Z}^{*}=\mathbb{Z}\setminus{\left \{ 0\right \} }$, $\mathbb{N}^{*}=\mathbb{N}\setminus{\left \{ 0\right \} }$. We note that $\textbf{e}_{1}=\left(1,0\right)$ and $\textbf{e}_{2}=\left(0,1\right)\in \mathbb{Z} ^{2}$. Consequently, $\mathbb{Z}^{2}=\mathbb{Z}\textbf{e}_{1}\oplus \mathbb{Z}\textbf{e}_{2}$. Unless otherwise specified, we use $\textbf{m} = \left(m_{1}, m_{2}\right)$ to denote an element in $\mathbb{Z}^{2}$.

From now on we assume  all summations of the form $\sum\limits_{{\bf i}\in \mathbb Z^2}\alpha^{\bf i}L_{\bf m+i}$ are finite.
\section{Transposed Poisson structures on $q$-Virasoro-like algebras}\label{sec3}
Let $1\neq q\in \mathbb{C}^{*}$. The $q$-analog  Virasoro-like algebra is the complex Lie algebra (see\cite{bib10}) with basis $\{L(\textbf{m})\mid\textbf{m}\in\mathbb{Z}^{2}\setminus\left\{{\bf 0}\right\}\}$ and subject to the Lie bracket defined by
\begin{equation}\label{eq:44.1}
\left[L _{\textbf{m}},L _{\textbf{n}}\right]=\lambda\left(\textbf{m},\textbf{n}\right)L_{\textbf{m}+\textbf{n}},
~\forall~\textbf{m},\textbf{n}\in\mathbb{Z}^{2}\setminus\left\{{\bf 0}\right\},
\end{equation}
where $\lambda\left(\textbf{m},\textbf{n}\right)=q^{m_{2}n_{1}}-q^{m_{1}n_{2}}$. For convenience, we conventionally set $L_{\bf 0}=0$. For brevity we will call this algebra $q$-Virasoro-like algebra. In this section, we will study the transposed Poisson structures on the $q$-Virasoro-like algebra in two cases: when $q$ is not a root of unity and when $q$ is a primitive root of unity of degree $t\in\mathbb{N}^{*}\setminus{\left \{ 1,2 \right \} }$. 

\subsection{The case where $q$ is not a root of unity for $q$-Virasoro-like algebra}\label{subsec2}
In this subsection, we suppose $1\neq q\in \mathbb{C}^{*}$ is not a root of unity, i.e. $q^{t}\ne 1$ for all $t\in \mathbb{N}^*$, and denote the $q$-Virasoro-like algebra by $V_{q}$. Observe that $V_{q}= \underset{\textbf{m}\in \mathbb{Z}^{2} }{\oplus} \left ( V_{q} \right ) _{\textbf{m}}$ is a $\mathbb{Z}^{2}$-grading, where
$\left ( V_{q} \right ) _{\bf{m}}=\mathbb{C} L_{\textbf{m}},$ for $\textbf{m}\in\mathbb{Z}^{2}$. By Lemma 4.1 in \cite{bib28}, one can see that $V_{q}$ can be generated by the finite set $\left \{  L_{(\pm 1,0)}, L_{(0,\pm 1)} \right \}$.

\begin{theorem} \label{E}
$\Delta(V_{q})=span_{\mathbb{C} } \left \{ {\rm id} \right \}$.
\end{theorem}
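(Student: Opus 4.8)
The strategy is to use the $\mathbb{Z}^2$-grading together with Lemma~\ref{B}: since $V_q$ is finitely generated (by $\{L_{(\pm 1,0)},L_{(0,\pm 1)}\}$) and $\mathbb{Z}^2$-graded, every $\frac{1}{2}$-derivation decomposes as a sum of homogeneous ones, so it suffices to determine $\Delta_{\mathbf s}(V_q)$ for each $\mathbf s\in\mathbb{Z}^2$. A homogeneous $\frac{1}{2}$-derivation $\varphi$ of degree $\mathbf s$ acts by $\varphi(L_{\mathbf m})=f(\mathbf m)L_{\mathbf m+\mathbf s}$ for scalars $f(\mathbf m)\in\mathbb{C}$ (with the convention that $f(\mathbf m)=0$ whenever $\mathbf m+\mathbf s=\mathbf 0$, and $f(\mathbf 0)$ irrelevant). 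Plugging into the defining identity $\varphi([L_{\mathbf m},L_{\mathbf n}])=\tfrac12([\varphi(L_{\mathbf m}),L_{\mathbf n}]+[L_{\mathbf m},\varphi(L_{\mathbf n})])$ and using \eqref{eq:44.1} yields the scalar functional equation
\begin{equation*}
2\,\lambda(\mathbf m,\mathbf n)\,f(\mathbf m+\mathbf n)=\lambda(\mathbf m+\mathbf s,\mathbf n)\,f(\mathbf m)+\lambda(\mathbf m,\mathbf n+\mathbf s)\,f(\mathbf n),\qquad \forall\,\mathbf m,\mathbf n.
\end{equation*}
I would then exploit special choices of $\mathbf m,\mathbf n$ to pin down $f$.

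The key computational steps: First, handle the nonzero-degree case $\mathbf s\neq\mathbf 0$. The plan is to substitute pairs $\mathbf m,\mathbf n$ lying on a common line (so that $\lambda(\mathbf m,\mathbf n)=0$) to force relations among the $f$-values, then use genericity of $q$ — which guarantees that $\lambda(\mathbf a,\mathbf b)=q^{a_2b_1}-q^{a_1b_2}$ vanishes only when the integer exponents $a_2b_1$ and $a_1b_2$ coincide, i.e. when $\mathbf a,\mathbf b$ are parallel — to conclude that the coefficients appearing cannot accidentally cancel. A careful bookkeeping over enough such substitutions (e.g. comparing the equation for $(\mathbf m,\mathbf n)$ with that for $(\mathbf m,\mathbf n')$ where $\mathbf n'$ is chosen parallel to $\mathbf n$ or to $\mathbf m+\mathbf s$) should show $f\equiv 0$, so $\Delta_{\mathbf s}(V_q)=0$ for $\mathbf s\neq\mathbf 0$. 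Second, in degree $\mathbf s=\mathbf 0$, $\varphi(L_{\mathbf m})=f(\mathbf m)L_{\mathbf m}$ and the equation becomes $2f(\mathbf m+\mathbf n)=f(\mathbf m)+f(\mathbf n)$ whenever $\lambda(\mathbf m,\mathbf n)\neq 0$; one shows $f$ is constant on $\mathbb{Z}^2\setminus\{\mathbf 0\}$ by connecting any two indices through a chain of non-parallel pairs (using that $\mathbb{Z}^2\setminus\{\mathbf 0\}$ is "connected" in this sense for $q$ generic), giving $\varphi=\alpha\,\mathrm{id}$.

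Assembling: $\Delta(V_q)=\bigoplus_{\mathbf s}\Delta_{\mathbf s}(V_q)=\Delta_{\mathbf 0}(V_q)=\mathbb{C}\,\mathrm{id}=\operatorname{span}_{\mathbb C}\{\mathrm{id}\}$.

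**Main obstacle.** The delicate part is the nonzero-degree case: showing that no nontrivial homogeneous $\frac{1}{2}$-derivation of degree $\mathbf s\neq\mathbf 0$ exists. Because $\lambda(\mathbf m+\mathbf s,\mathbf n)$ and $\lambda(\mathbf m,\mathbf n+\mathbf s)$ have shifted arguments, the functional equation is genuinely inhomogeneous in $\mathbf m$, and one must choose the substitution pairs cleverly — probably splitting into subcases according to whether $\mathbf s$ is parallel to the chosen test vectors — and repeatedly use that for generic $q$ a ratio $\lambda(\mathbf a,\mathbf b)/\lambda(\mathbf c,\mathbf d)$ is not a root-of-unity coincidence, so equalities of such ratios across infinitely many $\mathbf m$ force the $f$-values to vanish. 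I expect the bulk of the proof's length to be the case analysis needed to cover all directions of $\mathbf s$ and to rule out the exceptional configurations where several $\lambda$-factors simultaneously vanish.
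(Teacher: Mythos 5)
Your setup is exactly the paper's: decompose $\varphi$ via Lemma \ref{B} into homogeneous components, write $\varphi_{\mathbf s}(L_{\mathbf m})=f(\mathbf m)L_{\mathbf m+\mathbf s}$, and reduce to the scalar identity which is Eq.\eqref{eq:5.1}. But from that point on the proposal does not prove the theorem: for $\mathbf s\neq\mathbf 0$ you only state that "a careful bookkeeping over enough such substitutions should show $f\equiv 0$" and explicitly defer the case analysis. That deferred part \emph{is} the theorem — it is where all the work lies, and it is not routine. The paper handles it by first solving the recursions obtained from $\mathbf n=\textbf{e}_1,\textbf{e}_2$ as geometric sequences with ratio $\tfrac12$ (Eqs.\eqref{eq:7.2}--\eqref{eq:7.6} and \eqref{eq:5.2}--\eqref{eq:2.4}), which expresses every $f(\mathbf m)$ in terms of $f(\textbf{e}_1),f(\textbf{e}_2)$, and then feeding carefully chosen pairs back into Eq.\eqref{eq:5.1} (e.g.\ $\mathbf m=(2,1),\mathbf n=(-1,0)$; $\mathbf m=(-1,0),\mathbf n=(0,1)$; $\mathbf m=(1,1),\mathbf n=(-1,1)$ when $\mathbf s$ lies on an axis, and $\mathbf m=(2i_1,-2i_2),\mathbf n=(-i_1,i_2)$, $\mathbf m=(-2i_1,-i_2),\mathbf n=(i_1,-i_2)$ otherwise) to obtain an overdetermined linear system forcing $f(\textbf{e}_1)=f(\textbf{e}_2)=0$, with a separate subcase ($i_1=-1$, resp.\ a final pass along $m_2=-1$) to recover the coefficients on the axes. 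None of this, nor any substitute for it, appears in your proposal, so the claim $\Delta_{\mathbf s}(V_q)=0$ for $\mathbf s\neq\mathbf 0$ is unsupported. Note also that your proposed tactic of choosing $\mathbf m,\mathbf n$ parallel is fruitless as stated: if $\lambda(\mathbf m,\mathbf n)=0$ both sides involve shifted $\lambda$'s and you get one linear relation per pair, but the productive choices in the paper are non-parallel pairs combined with the closed-form solution of the recursions.

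The degree-zero case also needs more than you give it. From $2f(\mathbf m+\mathbf n)=f(\mathbf m)+f(\mathbf n)$ for non-parallel $\mathbf m,\mathbf n$, "connecting any two indices through a chain of non-parallel pairs" does not by itself yield constancy, because the relation links $f(\mathbf m+\mathbf n)$ to an \emph{average} of $f(\mathbf m),f(\mathbf n)$, not $f(\mathbf m)$ to $f(\mathbf n)$. A genuine argument is required: either the paper's route (solve the two geometric recursions to get Eq.\eqref{eq:9.18}, then substitute $\mathbf m=(2,1),\mathbf n=(1,1)$ to force $f(\textbf{e}_1)=f(\textbf{e}_2)$), or a direct trick such as computing $f(2\mathbf a+2\mathbf b)$ in two ways, via $(2\mathbf a+\mathbf b)+\mathbf b$ and $(\mathbf a+2\mathbf b)+\mathbf a$, which gives $f(\mathbf a)=f(\mathbf b)$ for all non-parallel $\mathbf a,\mathbf b$ and hence constancy. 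As written, both halves of your argument stop at the point where the actual proof begins.
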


\begin{proof}Let $\varphi $ be a $\frac{1}{2}$-derivation of $V_{q}$ and $\textbf{i}\in \mathbb{Z}^{2}$, then by Lemma \ref{B}, the $\mathbb{Z}^{2}$-grading of $V_{q}$ induces the decomposition $\varphi =\underset{\textbf{i}\in\mathbb{Z}^{2} }{\Sigma} \varphi_{\textbf{i}}$, where $\varphi_{\textbf{i}}$ is also a $\frac{1}{2}$ derivation of $V_{q}$. So we write
\begin{equation}\label{eq:3.2}
\varphi _{\textbf{i}}\left(L_{\textbf{m}}\right)=\alpha_{\textbf{m}} L_{\textbf{m+i}},~\forall~\textbf{m}\in\mathbb{Z}^{2}\setminus\left\{{\bf 0}\right\}.
\end{equation}
Based on Definition \ref{A}, we obtain
\begin{equation*}
2\varphi_{\textbf{i}}\left(\left[L _{\textbf{m}},L _{\textbf{n}}\right]\right)=\left[\varphi_{\textbf{i}}\left(L_{\textbf{m}}\right),L_{\textbf{n}}\right]+\left[L _{\textbf{m}},\varphi_{\textbf{i}}\left(L_{\textbf{n}}\right)\right],~\forall~\textbf{m},\textbf{n}\in\mathbb{Z}^{2}\setminus\left\{{\bf 0}\right\}.
\end{equation*}
Then by Eq.\eqref{eq:44.1} and Eq.\eqref{eq:3.2}, we have
\begin{equation}\label{eq:5.1}
2\lambda\left(\textbf{m},\textbf{n}\right)\alpha_{\textbf{m}+\textbf{n}}=
\alpha_{\textbf{m}}\lambda\left(\textbf{m}+\textbf{i},\textbf{n}\right)+
\alpha_{\textbf{n}}\lambda\left(\textbf{m},\textbf{n}+\textbf{i}\right),~\forall~\textbf{m},\textbf{n}\in\mathbb{Z}^{2}\setminus\left\{{\bf 0}\right\}.\end{equation}
To determine the coefficients, we need to consider the following cases.

\textbf{Case 1.} $\textbf{i}={\bf 0}$. 

From Eq.\eqref{eq:5.1}, it can be inferred that
\begin{equation}\label{eq:2.2}
\lambda\left(\textbf{m},\textbf{n}\right)\left(2\alpha_{\textbf{m}+\textbf{n}}-\alpha_{\textbf{m}}-\alpha_{\textbf{n}}\right)=0,~\forall~ \textbf{m},\textbf{n}\in\mathbb{Z}^{2}\setminus\left\{{\bf 0}\right\}.
\end{equation}
Particularly, taking $\textbf{n}= \textbf{e}_{1}$  and $\textbf{e}_{2}$ in Eq.\eqref{eq:2.2}, respectively, we have
\begin{equation}2\alpha_{\textbf{m}+\textbf{e}_{1}}-\alpha_{\textbf{m}}-\alpha_{\textbf{e}_{1}}=0,~\forall  ~\textbf{m}\in\mathbb{Z}\times\mathbb{Z}^{*}.\label{eq:9.11}\end{equation}
\begin{equation}2\alpha_{\textbf{m}+\textbf{e}_{2}}-\alpha_{\textbf{m}}-\alpha_{\textbf{e}_{2}}=0,~\forall  ~\textbf{m}\in\mathbb{Z}^{*}\times\mathbb{Z}.\label{eq:9.12}\end{equation}
Fix $m_{2}\in\mathbb{Z}^{*}$ and treat $\left( \alpha_{\textbf{m}}-\alpha_{\textbf{e}_{1}}\right)_{m_{1}\in\mathbb{Z}}$ as a geometric sequence, then by Eq.\eqref{eq:9.11}, we have
\begin{equation} \label{eq:9.13}
\alpha_{\textbf{m}}=\left( \alpha_{(1,m_{2})}-\alpha_{\textbf{e}_{1}}\right)\left(\frac{1}{2}\right)^{m_{1}-1}+\alpha_{\textbf{e}_{1}},~\forall~\bf m\in\mathbb{Z}\times \mathbb Z^*. 
\end{equation}
Fix $m_{1}\in\mathbb{Z}^{*}$ and treat $\left(\alpha_{\textbf{m}}-\alpha_{\textbf{e}_{2}}\right)_{m_{2}\in\mathbb{Z}}$ as a geometric sequence, then by Eq.\eqref{eq:9.12}, we have
\begin{equation}\label{eq:9.14}
\alpha_{\textbf{m}}=\left( \alpha_{(m_{1},0)}-\alpha_{\textbf{e}_{2}}\right)\left(\frac{1}{2}\right)^{m_{2}}+\alpha_{\textbf{e}_{2}},~\forall~\bf m\in\mathbb Z^*\times\mathbb{Z}. 
\end{equation}
Taking $m_{1}=1$ in Eq.\eqref{eq:9.14}, we have
\begin{equation} \alpha_{(1,m_{2})}=\left(\alpha_{\textbf{e}_{1}}-\alpha_{\textbf{e}_{2}}\right)\left(\frac{1}{2}\right)^{m_{2}}+\alpha_{\textbf{e}_{2}},~\forall  ~m_{2}\in\mathbb{Z}.\label{eq:9.15} \end{equation}
Substituting Eq.\eqref{eq:9.15} into Eq.\eqref{eq:9.13}, we obtain 
\begin{equation}\label{eq:9.18}
\alpha_{\textbf{m}}=\left(\alpha_{\textbf{e}_{1}}-\alpha_{\textbf{e}_{2}}\right)\left ( \left(\frac{1}{2}\right)^{m_{2}}-1 \right )\left ( \frac{1}{2}\right)^{m_{1}-1} +\alpha_{\textbf{e}_{1}},~\forall~\textbf{m}\in \mathbb{Z}\times \mathbb{Z}^{*}.
\end{equation}
Substituting Eq.\eqref{eq:9.18} into Eq.\eqref{eq:2.2}, then for those $\bf m\in\mathbb Z\times\mathbb{Z}^{*}$ and $\bf n\in\mathbb Z\times\mathbb{Z}^{*}$ such that $\bf m+\bf n\in\mathbb Z\times\mathbb{Z}^{*}$, we have 
\begin{eqnarray*}
&&\lambda\left(\textbf{m},\textbf{n}\right)\left(\alpha_{\textbf{e}_{1}}-\alpha_{\textbf{e}_{2}}\right)
(\left(\left(\frac{1}{2}\right)^{m_{2}+n_{2}}-1\right)\left(\frac{1}{2}\right)^{m_{1}+n_{1}-2}-
\left(\left(\frac{1}{2}\right)^{m_{2}}-1\right)\left(\frac{1}{2}\right)^{m_{1}-1}\\
&&-\left(\left(\frac{1}{2}\right)^{n_{2}}-1\right)\left(\frac{1}{2}\right)^{n_{1}-1})=0.
\end{eqnarray*}
By taking $\textbf{m}=\left(2,1\right) $, $\textbf{n}=\left(1,1\right) $ in the above equation, we get
\begin{equation*}\alpha_{\textbf{e}_{1}}=\alpha_{\textbf{e}_{2}}.\label{eq:9.17}\end{equation*}
Thus by Eq.\eqref{eq:9.18}, we get
\begin{equation*}
\alpha_{\textbf{m}}=\alpha_{\textbf{e}_{1}}=\alpha_{\textbf{e}_{2}},~\forall~\textbf{m}\in \mathbb{Z}\times \mathbb{Z}^{*}.
\end{equation*}
Particularly,
$$\alpha_{(m_{1},-1)}=\alpha_{\textbf{e}_{2}},\forall m_{1}\in \mathbb{Z}.$$
By setting $m_{2}=-1$ in Eq.\eqref{eq:9.12} and using the above equation, it follows that,
\begin{equation*}\alpha_{(m_{1},0)}=\alpha_{\textbf{e}_{2}}=\alpha_{\textbf{e}_{1}},~\forall~ m_{1}\in \mathbb{Z}^{*}.\end{equation*}
Thus we proved that for all $\textbf{m}\in\mathbb{Z}^{2}\setminus\left\{{\bf 0}\right\}$, $\alpha_{\textbf{m}}$ equals to a constant, by denoting this constant as $\alpha$, we get
$$\varphi _{\bf 0}=\alpha {\rm id}.$$

\textbf{Case 2.} $\textbf{i}=(i_{1},i_{2})\in \mathbb{Z}^{*}\times \left\{0\right\} $ or $\left\{0\right\} \times \mathbb{Z}^{*}$.
 
 Without loss of generality, we suppose $\textbf{i} \in \mathbb{Z}^{*}\times\left\{0\right\}$. By Eq.\eqref{eq:5.1}, it can be inferred that $~\forall~\textbf{m},\textbf{n}\in\mathbb{Z}^{2}\setminus\left\{\left(0,0\right)\right\},$
\begin{equation}\label{eq:7.1}
2\left(q^{m_{2}n_{1}}-q^{m_{1}n_{2}}\right) \alpha_{\textbf{m}+\textbf{n}}=
\alpha_{\textbf{m}}\left(q^{m_{2}n_{1}}-q^{(m_{1}+i_{1})n_{2}}\right)+\alpha_{\textbf{n}}\left(q^{m_{2}(n_{1}+i_{1})}-q^{m_{1}n_{2}}\right).
\end{equation}
By taking $\textbf{n}= \textbf{e}_{1}$  and $\textbf{e}_{2}$ in Eq.\eqref{eq:7.1}, respectively, we have

\begin{equation}\label{eq:7.2}
 2\left(1-q^{m_{2}}\right) \alpha_{\textbf{m}+\textbf{e}_{1}}=
\alpha_{\textbf{m}}\left(1-q^{m_{2}}\right)+\alpha_{\textbf{e}_{1}}\left(1-q^{m_{2}(1+i_{1})}\right),~\forall~\textbf{m}\in\mathbb{Z}^{2}\setminus\left\{{\bf 0}\right\},
\end{equation}

\begin{equation} \label{eq:7.3}
2\left(q^{m_{1}}-1\right) \alpha_{\textbf{m}+\textbf{e}_{2}}=
\alpha_{\textbf{m}}\left(q^{m_{1}+i_{1}}-1\right)+\alpha_{\textbf{e}_{2}}\left(q^{m_{1}}-q^{m_{2}i_{1}}\right),~\forall~\textbf{m}\in\mathbb{Z}^{2}\setminus\left\{{\bf 0}\right\}.\end{equation}
For a fixed $m_{2}\in\mathbb{Z}^{*}$,  treat $\left(\alpha_{\textbf{m}}+
\frac{\alpha_{\textbf{e}_{1}}\left(1-q^{m_{2}(1+i_{1})}\right)}{q^{m_{2}}-1}\right)_{m_{1}\in\mathbb{Z}}$ as a geometric sequence, then by Eq.\eqref{eq:7.2}, we have
\begin{equation}\label{eq:7.4}
\alpha_{\textbf{m}}=\left(\alpha_{(0, m_{2})}-
\frac{\alpha_{\textbf{e}_{1}}\left(1-q^{m_{2}(1+i_{1})}\right)}{1-q^{m_{2}}}\right)\left(\frac{1}{2}\right)^{m_{1}}+
\frac{\alpha_{\textbf{e}_{1}}\left(1-q^{m_{2}(1+i_{1})}\right)}{1-q^{m_{2}}},~\forall~\textbf{m}\in\mathbb{Z}\times\mathbb{Z}^{*}.\end{equation}
By taking $m_{1} = 0$ in Eq.\eqref{eq:7.3}, we get
\begin{equation*}
\alpha_{(0, m_{2})}=\frac{\alpha_{\textbf{e}_{2}}\left(1-q^{m_{2} i_{1}}\right)}{1-q^{i_{1}}},~\forall~m_{2}\in\mathbb{Z}^{*} .     
\end{equation*}
By substituting the above equation into Eq.\eqref{eq:7.4}, we have $\forall~\textbf{m}=\left ( m_{1},m_{2} \right ) \in\mathbb{Z}\times \mathbb{Z}^{*},$
\begin{equation}\label{eq:7.6}
\alpha_{\textbf{m}}=\frac{\alpha_{\textbf{e}_{2}}\left(1-q^{m_{2} i_{1}}\right)}{1-q^{i_{1}}}
\left(\frac{1}{2}\right)^{m_{1}}-
\frac{\alpha_{\textbf{e}_{1}}\left(1-q^{m_{2}(1+i_{1})}\right)}{1-q^{m_{2}}}\left(\left(\frac{1}{2}\right)^{m_{1}}-1\right).
\end{equation}
On the one hand, by taking $\textbf{m}=\left ( 1,1 \right )$ in Eq.\eqref{eq:7.6}, we get
\begin{equation}\alpha_{(1, 1)}=\frac{1}{2}\alpha_{\textbf{e}_{2}}+
\alpha_{\textbf{e}_{1}}\frac{\left(q^{(1+i_{1})}-1\right)}{2\left(q-1\right)}.\label{eq:6.1}\end{equation}
On the other hand, by taking $\textbf{m}= (2,1)$  and $\textbf{n}= (-1,0)$ in Eq.\eqref{eq:7.1}, we have
\begin{equation}2\left(q^{-1}-1\right)\alpha_{(1, 1)}=\left(q^{-1}-1\right)\alpha_{(2, 1)}+\left(q^{i_{1}-1}-1\right)\alpha_{(-1, 0)}.\label{eq:191}\end{equation}
By taking $\textbf{m}= (2,1)$ and $\textbf{m}= (-1,1)$ in Eq.\eqref{eq:7.6}, respectively, we have
\begin{equation}\alpha_{(2, 1)}=\frac{1}{4}\alpha_{\textbf{e}_{2}}+
\frac{3}{4}\frac{\left(1-q^{(1+i_{1})}\right)}{\left(1-q\right)}\alpha_{\textbf{e}_{1}}.\label{eq:192}\end{equation}
and
\begin{equation}\alpha_{(-1, 1)}=2\alpha_{\textbf{e}_{2}}-
\frac{\left(1-q^{(1+i_{1})}\right)}{\left(1-q\right)}\alpha_{\textbf{e}_{1}}.\label{eq:193}\end{equation}
By taking $\textbf{m}= (-1,0)$  and $\textbf{n}= (0,1)$ in Eq.\eqref{eq:7.1}, we have
$$\alpha_{(-1, 1)}=\frac{1}{2}\alpha_{\textbf{e}_{2}}+
\frac{\left(1-q^{(i_{1}-1)}\right)}{2\left(1-q^{-1}\right)}\alpha_{(-1,0)}.$$
By comparing the above equation with Eq.\eqref{eq:193}, we get
\begin{equation}\alpha_{(-1, 0)}=\frac{3\left(1-q^{-1}\right)}{\left(1-q^{(i_{1}-1)}\right)}\alpha_{\textbf{e}_{2}}+
\frac{2\left(q^{(1+i_{1})}-1\right) \left(1-q^{-1}\right)   }{\left(1-q\right)\left(1-q^{(i_{1}-1)}\right)}\alpha_{\textbf{e}_{1}}.\label{eq:194}\end{equation}
By substituting Eq.\eqref{eq:192} and Eq.\eqref{eq:194} into Eq.\eqref{eq:191}, we get
$$\alpha_{(1, 1)}=\frac{13}{8}\alpha_{\textbf{e}_{2}}-
\frac{5}{8}\frac{\left(1-q^{(1+i_{1})}\right)}{\left(1-q\right)}\alpha_{\textbf{e}_{1}}.$$
By comparing the above equation with Eq.\eqref{eq:6.1}, we get
\begin{equation}\alpha_{\textbf{e}_{2}}=\frac{\left(1-q^{(1+i_{1})}\right)}{\left(1-q\right)}\alpha_{\textbf{e}_{1}}.\label{eq:195}\end{equation}
Taking $\textbf{m}= (1,1)$  and $\textbf{n}= (-1,1)$ in Eq.\eqref{eq:7.1}, we have
$$2\left(q^{-1}-q\right)\alpha_{(0, 2)}=\left(q^{-1}-q^{(1+i_{1})}\right)\alpha_{(1, 1)}+\left(q^{(i_{1}-1)}-q\right)\alpha_{(-1, 1)}.$$
By substituting Eq.\eqref{eq:6.1}Eq.\eqref{eq:193} into the above equation, we get
\begin{equation}\alpha_{(0, 2)}=\frac{q^{-1}-q^{(1+i_{1})} +4\left(q^{(i_{1}-1)}-q\right) }{4(q^{-1}-q)}\alpha_{\textbf{e}_{2}}+
\frac{\left(q^{(1+i_{1})}-1\right) \left( q^{-1}-q^{(1+i_{1})} -2q^{(i_{1}-1)}+2q \right) }{4\left(q-1\right)(q^{-1}-q)}\alpha_{\textbf{e}_{1}}.\label{eq:196}\end{equation}
By taking $\textbf{m}= (0,2)$ in Eq.\eqref{eq:7.6}, we have
$$\alpha_{(0, 2)}=\left(1+q^{i_{1}}\right)\alpha_{\textbf{e}_{2}}.$$
By comparing the above equation with Eq.\eqref{eq:196}, we get\\
$\left(\frac{q^{-1}-q^{(1+i_{1})} +4\left(q^{(i_{1}-1)}-q\right) }{4(q^{-1}-q)}-1-q^{i_{1}}\right)\alpha_{\textbf{e}_{2}}+\frac{\left(q^{(1+i_{1})}-1\right) \left( q^{-1}-q^{(1+i_{1})} -2q^{(i_{1}-1)}+2q \right) }{4\left(q-1\right)(q^{-1}-q)}\alpha_{\textbf{e}_{1}}=0.$
By substituting Eq.\eqref{eq:195} into the above equation, we get
$$\left(1-q^{i_{1}+1}\right)\left(1+q^{i_{1}}\right)\left(q^{2}-1\right)\alpha_{\textbf{e}_{1}}=0.$$
Now we need consider the two subcases: $i_{1}=-1$ and $i_{1}\not=-1$

\textbf{Subcase 1.} $i_{1}\not=-1$. Since $q$ is not a root of unity, so $1-q^{i_{1}+1}\not=0$, $1+q^{i_{1}}\not=0$, $q^{2}-1\not=0$, by the above equation, we get
$$\alpha_{\textbf{e}_{1}}=0.$$
By substituting $\alpha_{\textbf{e}_{1}}=0$ into Eq.\eqref{eq:195}, we get
$$\alpha_{\textbf{e}_{2}}=0.$$
By substituting $\alpha_{\textbf{e}_{1}}=\alpha_{\textbf{e}_{2}}=0$ into Eq.\eqref{eq:7.6}, we get
\begin{equation*}\alpha_{\textbf{m}}=0,~\forall~\textbf{m}=\left ( m_{1},m_{2} \right ) \in\mathbb{Z}\times \mathbb{Z}^{*}.\end{equation*}

\textbf{Subcase 2.} $i_{1}=-1$. By Eq.\eqref{eq:195}, we get
$$\alpha_{\textbf{e}_{2}}=0.$$
By substituting $i_{1}=-1$ and $\alpha_{\textbf{e}_{2}}=0$ into Eq.\eqref{eq:7.6}, we get
$$\alpha_{\textbf{m}}=0,~\forall~\textbf{m}=\left ( m_{1},m_{2} \right ) \in\mathbb{Z}\times \mathbb{Z}^{*}.$$
In summay, in any case, we have
\begin{equation}\alpha_{\textbf{m}}=0,~\forall~\textbf{m}=\left ( m_{1},m_{2} \right ) \in\mathbb{Z}\times \mathbb{Z}^{*}\label{eq:198}.\end{equation}
By taking $m_{2}=-1$ in Eq.\eqref{eq:7.3} and  substituting $\alpha_{\textbf{e}_{2}}=0$ and Eq.\eqref{eq:198} into Eq.\eqref{eq:7.3}, we get
$$2\left(q^{m_{1}}-1\right)\alpha_{(m_{1},0)}=0,~\forall~m_1 \in \mathbb{Z}^{*}.$$
Then
$$\alpha_{\textbf{m}}=0,~\forall~\textbf{m}=\left(m_{1},m_{2}\right)\in\mathbb{Z}^{*}\times\left \{0\right\} .$$

In summary, for $\textbf{i}\in \mathbb{Z}^{*}\times\left \{ 0 \right \} $ or  $\left \{ 0 \right \} \times \mathbb{Z}^{*}$, we have for all $\textbf{m}\in\mathbb{Z}^{2}\setminus\left\{\left(0,0\right)\right\}$, $\varphi_{\textbf{i}} \left(L_{\textbf{m}}\right)=0$.

\textbf{Case 3.} $\textbf{i}=(i_{1},i_{2})\in\mathbb{Z}^{*}\times\mathbb{Z}^{*}$.

By taking $\textbf{n}= \textbf{e}_{1}$  and $\textbf{e}_{2}$ in Eq.\eqref{eq:5.1}, respectively, we have
\begin{equation}\label{eq:5.2}
2\left(1-q^{m_{2}}\right) \alpha_{\textbf{m}+\textbf{e}_{1}}=
\alpha_{\textbf{m}}\left(1-q^{(m_{2}+i_{2})}\right)+
\alpha_{\textbf{e}_{1}}\left(q^{m_{1}i_{2}}-q^{m_{2}(1+i_{1})}\right),~\forall ~\textbf{m}\in\mathbb{Z}^{2}\setminus\left\{{\bf 0}\right\},\end{equation}
\begin{equation}\label{eq:5.3}
2\left(q^{m_{1}}-1\right) \alpha_{\textbf{m}+\textbf{e}_{2}}=
\alpha_{\textbf{m}}\left(q^{\left(m_{1}+i_{1}\right)}-1\right)+
\alpha_{\textbf{e}_{2}}\left(q^{m_{1}(1+i_{2})}-q^{m_{2}i_{1}}\right),~\forall~ \textbf{m}\in\mathbb{Z}^{2}\setminus\left\{{\bf 0}\right\}.\end{equation}
By taking $m_{2}= 0$ in Eq.\eqref{eq:5.2}, we have
\begin{equation}\alpha_{(m_{1}, 0)}=\frac{\alpha_{\textbf{e}_{1}}\left(1-q^{m_{1} i_{2}}\right)}{1-q^{i_{2}}},~\forall~ m_{1}\in\mathbb{Z}^{*}.\label{eq:2.3}\end{equation}
By taking $m_{1} = 0$ in Eq.\eqref{eq:5.3}, we have
\begin{equation}\alpha_{(0, m_{2})}=\frac{\alpha_{\textbf{e}_{2}}\left(1-q^{m_{2} i_{1}}\right)}{1-q^{i_{1}}},~\forall~ m_{2}\in\mathbb{Z}^{*}.\label{eq:2.4}\end{equation}
By taking  $n_{1} = 0$, $m_{2} = 0$ in Eq\eqref{eq:5.1}, we get
\begin{equation*}2\left(1-q^{m_{1}n_{2}}\right)\alpha_{(m_{1},n_{2})}=\left(1-q^{n_{2}\left(m_{1}+i_{1}\right)}\right)\alpha_{(m_{1},0)}+\left(1-q^{m_{1} \left(n_{2}+i_{2} \right)}\right)\alpha_{(0,n_{2})},~\forall  ~m_{1},n_{2}\in\mathbb{Z}^{*}.\end{equation*}
By substituting Eq.\eqref{eq:2.3} and Eq.\eqref{eq:2.4} into the above equation, we have $\forall  ~m_{1},n_{2}\in\mathbb{Z}^{*}$
\begin{eqnarray}\label{eq:99.35}
&&2\left(1-q^{m_{1}n_{2}}\right)\alpha_{(m_{1},n_{2})}\\
&=&\left(1-q^{n_{2}\left(m_{1}+i_{1}\right)}\right)\frac{\left(1-q^{m_{1} i_{2}}\right)}{1-q^{i_{2}}}\alpha_{\textbf{e}_{1}}+\left(1-q^{m_{1} \left(n_{2}+i_{2} \right)}\right)\frac{\left(1-q^{n_{2} i_{1}}\right)}{1-q^{i_{1}}}\alpha_{\textbf{e}_{2}}.\nonumber
\end{eqnarray}
 For those $\textbf{m},\textbf{n}\in\mathbb{Z}^{*}\times\mathbb{Z}^{*}$ such that $\bf m+\bf n\in\mathbb{Z}^{*}\times\mathbb{Z}^{*}$, by substituting the above equation into Eq.\eqref{eq:5.1}, we have
\begin{eqnarray*}
&&2\lambda\left(\textbf{m},\textbf{n}\right)
\big(\frac{\left(1-q^{(m_{2}+n_{2})\left(m_{1}+n_{1}+i_{1}\right)}\right)\left(1-q^{(m_{1}+n_{1}) i_{2}}\right)}{(1-q^{i_{2}})\left(1-q^{(m_{1}+n_{1})(m_{2}+n_{2})}\right)}\alpha_{\textbf{e}_{1}}\\
&&+\frac{\left(1-q^{(m_{1}+n_{1}) \left(m_{2}+n_{2}+i_{2} \right)}\right)\left(1-q^{(m_{2}+n_{2}) i_{1}}\right)}{(1-q^{i_{1})}\left(1-q^{(m_{1}+n_{1})(m_{2}+n_{2})}\right)}\alpha_{\textbf{e}_{2}}\big)\\
&=&\lambda\left(\textbf{m}+\textbf{i},\textbf{n}\right)
\left(\frac{\left(1-q^{m_{2}\left(m_{1}+i_{1}\right)}\right)\left(1-q^{m_{1}i_{2}}\right)}{(1-q^{i_{2}})\left(1-q^{m_{1}m_{2}}\right)}\alpha_{\textbf{e}_{1}}+\frac{\left(1-q^{m_{1}\left(m_{2}+i_{2} \right)}\right)\left(1-q^{m_{2} i_{1}}\right)}{(1-q^{i_{1})}\left(1-q^{m_{1}m_{2}}\right)}\alpha_{\textbf{e}_{2}}\right)
\\&&+\lambda\left(\textbf{m},\textbf{n}+\textbf{i}\right)
\left(\frac{\left(1-q^{n_{2}\left(n_{1}+i_{1}\right)}\right)\left(1-q^{n_{1}i_{2}}\right)}{(1-q^{i_{2}})\left(1-q^{n_{1}n_{2}}\right)}\alpha_{\textbf{e}_{1}}+\frac{\left(1-q^{n_{1}\left(n_{2}+i_{2} \right)}\right)\left(1-q^{n_{2} i_{1}}\right)}{(1-q^{i_{1})}\left(1-q^{n_{1}n_{2}}\right)}\alpha_{\textbf{e}_{2}}\right).
\end{eqnarray*}
Particularly, by setting $\textbf{m}=(2i_{1},-2i_{2})$, $\textbf{n}=(-i_{1},i_{2})$ and $\textbf{m}=(-2i_{1},-i_{2})$, $\textbf{n}=(i_{1},-i_{2})$ respectively
into the above equation, we obtain
$$\left\{\begin{matrix}
(1-q^{i_{1}})\alpha_{\textbf{e}_{1}}+(1-q^{i_{2}})\alpha_{\textbf{e}_{2}}=0, \\
(1-q^{i_{1}})\alpha_{\textbf{e}_{1}}+2(1-q^{i_{2}})\alpha_{\textbf{e}_{2}}=0.
\end{matrix}\right.$$
Solving the system of equations above yields
$$\alpha_{\textbf{e}_{1}}=\alpha_{\textbf{e}_{2}}=0.$$
By substituting $\alpha_{\textbf{e}_{1}}=0$ and $\alpha_{\textbf{e}_{2}}=0$ into Eq\eqref{eq:99.35}, we get
$$\alpha_{(m_{1},n_{2})}=0,~\forall ~(m_{1},n_{2})\in\mathbb{Z}^{*}\times \mathbb{Z}^{*}.$$
i.e.
\begin{equation*}\alpha_{\textbf{m}}=0,~\forall~ \textbf{m}\in\mathbb{Z}^{*}\times \mathbb{Z}^{*}.\end{equation*}
By substituting $\alpha_{\textbf{e}_{1}}=0$ into Eq.\eqref{eq:2.3}, we get
\begin{equation*}\alpha_{(m_{1},0)}=0,~\forall~ m_{1}\in\mathbb{Z}^{*}.\end{equation*}
By substituting $\alpha_{\textbf{e}_{2}}=0$ into Eq.\eqref{eq:2.4}, we get
\begin{equation*}\alpha_{(0,m_{2})}=0,~\forall~ m_{2}\in\mathbb{Z}^{*}.\end{equation*}

In summary, for $\textbf{i}\in \mathbb{Z}^{*}\times\mathbb{Z}^{*} $, we have for all $\textbf{m}\in\mathbb{Z}^{2}\setminus\left\{{\bf 0}\right\}$, $\varphi_{\textbf{i}} \left(L_{\textbf{m}}\right)=0$.

Hence, combining the analysis of the three cases above, we obtain the desired result.
\end{proof}

Based on Lemma \ref{C} and Theorem \ref{E}, the following corollary can be derived.

\begin{corollary} \label{coro1}
There are no non-trivial transposed Poisson algebra structures defined on $V_{q}$.
\end{corollary}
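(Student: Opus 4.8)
The plan is to deduce Corollary \ref{coro1} directly from the two results already in hand, with essentially no further computation. By Theorem \ref{E}, the space of $\frac{1}{2}$-derivations of $V_{q}$ is exactly $\mathrm{span}_{\mathbb{C}}\{\mathrm{id}\}$; since a $\frac{1}{2}$-derivation of the form $\alpha\,\mathrm{id}$ with $\alpha\in\mathbb{C}$ is trivial by definition, $V_{q}$ admits no non-trivial $\frac{1}{2}$-derivations. Lemma \ref{C} then applies verbatim: a Lie algebra with no non-trivial $\frac{1}{2}$-derivations carries only trivial transposed Poisson structures. This yields the corollary immediately.

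If one prefers a self-contained argument that bypasses Lemma \ref{C}, one can run it through Lemma \ref{Q} instead. Given a transposed Poisson structure $\cdot$ on $V_{q}$, for each $z\in V_{q}$ the operator $L_{z}\colon x\mapsto z\cdot x$ is a $\frac{1}{2}$-derivation, so by Theorem \ref{E} there is a scalar $\beta(z)\in\mathbb{C}$ with $L_{z}=\beta(z)\,\mathrm{id}$, and $\beta$ is linear in $z$ because $\cdot$ is bilinear. Hence $z\cdot x=\beta(z)x$ for all $x,z$, and commutativity $z\cdot x=x\cdot z$ forces $\beta(z)x=\beta(x)z$. Since $\dim V_{q}\ge 2$, choosing $x,z$ linearly independent gives $\beta\equiv 0$, so $\cdot$ is the zero multiplication; that is, the structure is trivial.

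I do not expect any genuine obstacle at this stage: the entire content of the statement is concentrated in Theorem \ref{E}, whose proof is the long case analysis (over the degree $\mathbf{i}\in\mathbb{Z}^{2}$ of a homogeneous $\frac{1}{2}$-derivation, split into $\mathbf{i}=\mathbf{0}$, $\mathbf{i}$ on a coordinate axis, and $\mathbf{i}$ off both axes) together with the grading decomposition from Lemma \ref{B} and the finite generation of $V_{q}$. Once that structural result is available, Corollary \ref{coro1} is a one-line consequence of the correspondence between $\frac{1}{2}$-derivations and transposed Poisson structures, and the write-up need only cite Theorem \ref{E} and Lemma \ref{C}.
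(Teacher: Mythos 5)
Your proposal is correct and follows exactly the paper's route: the corollary is obtained by combining Theorem \ref{E} (all $\frac{1}{2}$-derivations of $V_{q}$ are scalar multiples of the identity) with Lemma \ref{C}. The alternative argument via Lemma \ref{Q} is a valid, slightly more explicit variant, but it is not needed and does not differ in substance.
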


\subsection{The case where $q$ is a root of unity for $q$-Virasoro-like algebra }\label{subsec3.2}
In this subsection and Section \ref{sec4}, we use the following notations. Let $t>2$ be a positive integer and $q$ be a primitive root of unity of degree $t$. Set
$t\mathbb{Z}=\left \{ tn\mid n\in \mathbb{Z} \right \}$,
$\left(t\mathbb{Z}\right)^{2}=\left(t\mathbb{Z}\right)\textbf{e}_{1}\oplus\left(t\mathbb{Z}\right)\textbf{e}_{2}$, $\Gamma_{1}=\left(t\mathbb{Z}\right)^{2}\setminus\left\{{\bf 0}\right\}$,
$\Gamma_{2}=\mathbb{Z}^{2}\setminus\left(t\mathbb{Z}\right)^{2}$, and $\Gamma=\Gamma_{1}\cup\Gamma_{2}$.

In this subsection, we denote the $q$-Virasoro-like algebra defined by Eq.\eqref{eq:44.1} as $ \tilde{V} _{q}$. We remark that for ${\bf m, n}\in \Gamma,$ $\lambda\left(\textbf{m},\textbf{n}\right)=0$
if and only if $q^{m_{2}n_{1}}-q^{m_{1}n_{2}}=1$, that is $t\mid\left(m_{2}n_{1}-m_{1}n_{2}\right)$. This implies that if $\textbf{m}+\textbf{n}\in \Gamma_{1}$ for some $\textbf{m}\in \Gamma_{2}$ and $\textbf{n}\in \Gamma_{2}$, then one has $\lambda\left(\textbf{m},\textbf{n}\right )=0$. By the Lemma 2.1 in \cite{bib29} , we can see the center of $\tilde{V} _{q}$ is the
subalgebra $\mathbb{Z} = \underset{\textbf{i}\in \Gamma_{1}}{\oplus} \mathbb{C} L_{\textbf{i}}$. Observe that $\tilde{V} _{q}= \underset{\textbf{m}\in \mathbb{Z}^{2} }{\oplus} \left ( \tilde{V} _{q}\right ) _{\textbf{m}}$ is a $\mathbb{Z}^{2}$-grading, where $\left ( \tilde{V} _{q} \right ) _{\textbf{m}}= \mathbb{C} L_{\textbf{m}}, \textbf{m}\in\mathbb{Z}^{2}$. One can easily see that for all $\textbf{i}\in \Gamma_{1}$, $L_{\textbf{i}}$ can not be generated by the set $\left \{ L(\textbf{m}) \mid \textbf{m}\in\Gamma\setminus\{{\bf i}\} \right \} $, so $\tilde{V} _{q}$ is not finitely generated.

\begin{theorem} \label{F}
Let $\varphi$ be a $\frac{1}{2}$-derivation of $ \tilde{V} _{q}$. Then
$$\forall {\bf m}\in \Gamma, \varphi(L_{\bf{m}})=\left\{\begin{matrix}
 \underset{{\bf i}\in \Gamma_{1}}{\Sigma}\alpha^{\bf i}L_{{\bf m}+{\bf i}}, & {\bf m}\in\Gamma_{2},\\
  \underset{{\bf i}\in \Gamma_{1}}{\Sigma}\alpha^{{\bf i}}_{\bf m} L_{{\bf m}+{\bf i}},&{\bf m}\in\Gamma_{1}.
\end{matrix}\right.$$
\end{theorem}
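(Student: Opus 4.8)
The plan is to analyze $\varphi$ through its structure constants directly: because $\tilde{V}_{q}$ is not finitely generated, Lemma \ref{B} is unavailable, so one cannot first reduce $\varphi$ to homogeneous pieces. First I would dispose of the central source indices. If $\mathbf{m}\in\Gamma_{1}$ then $L_{\mathbf{m}}$ is central, so $[L_{\mathbf{m}},L_{\mathbf{n}}]=0$ for every $\mathbf{n}\in\Gamma$, and the $\frac{1}{2}$-derivation identity degenerates to $[\varphi(L_{\mathbf{m}}),L_{\mathbf{n}}]=-[L_{\mathbf{m}},\varphi(L_{\mathbf{n}})]=0$ for all $\mathbf{n}$; hence $\varphi(L_{\mathbf{m}})$ lies in the center $\bigoplus_{\mathbf{i}\in\Gamma_{1}}\mathbb{C}L_{\mathbf{i}}$ of $\tilde{V}_{q}$, which is exactly the second line of the claimed formula. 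This also shows $\varphi$ preserves the center, so it descends to a $\frac{1}{2}$-derivation of the quotient $\tilde{V}_{q}\big/\bigoplus_{\mathbf{i}\in\Gamma_{1}}\mathbb{C}L_{\mathbf{i}}$, which—in contrast to $\tilde{V}_{q}$—is finitely generated; this could be used to organize the bookkeeping, though it does not by itself rule out central components of $\varphi(L_{\mathbf{m}})$ for $\mathbf{m}\in\Gamma_{2}$.

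Next, for $\mathbf{m}\in\Gamma_{2}$ I would write $\varphi(L_{\mathbf{m}})=\sum_{\mathbf{k}}c^{\mathbf{k}}_{\mathbf{m}}L_{\mathbf{m}+\mathbf{k}}$ (a finite sum, $L_{\mathbf{0}}:=0$), substitute into the $\frac{1}{2}$-derivation identity, expand brackets via \eqref{eq:44.1}, and compare the coefficient of $L_{\mathbf{m}+\mathbf{n}+\mathbf{k}}$ for each fixed $\mathbf{k}$ to obtain the master relation
\begin{equation*}
2\lambda(\mathbf{m},\mathbf{n})\,c^{\mathbf{k}}_{\mathbf{m}+\mathbf{n}}=\lambda(\mathbf{m}+\mathbf{k},\mathbf{n})\,c^{\mathbf{k}}_{\mathbf{m}}+\lambda(\mathbf{m},\mathbf{n}+\mathbf{k})\,c^{\mathbf{k}}_{\mathbf{n}}
\end{equation*}
for all $\mathbf{m},\mathbf{n}$. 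The pivotal observation is that $q^{k_{2}n_{1}}=q^{k_{1}n_{2}}=1$ for every $\mathbf{n}$ precisely when $\mathbf{k}\in(t\mathbb{Z})^{2}$. So for $\mathbf{k}\in(t\mathbb{Z})^{2}$ both shifted factors equal $\lambda(\mathbf{m},\mathbf{n})$ and the relation collapses to $\lambda(\mathbf{m},\mathbf{n})(2c^{\mathbf{k}}_{\mathbf{m}+\mathbf{n}}-c^{\mathbf{k}}_{\mathbf{m}}-c^{\mathbf{k}}_{\mathbf{n}})=0$, the exact analogue of \eqref{eq:2.2}; specializing $\mathbf{n}=\mathbf{e}_{1}$ (where $\lambda(\mathbf{m},\mathbf{e}_{1})=q^{m_{2}}-1\ne0$ iff $t\nmid m_{2}$) and $\mathbf{n}=\mathbf{e}_{2}$ (where $\lambda(\mathbf{m},\mathbf{e}_{2})=1-q^{m_{1}}\ne0$ iff $t\nmid m_{1}$), one runs the geometric-sequence argument of the proof of Theorem \ref{E}, Case~1, verbatim, with ``$m_{j}\ne0$'' replaced throughout by ``$t\nmid m_{j}$''—all indices occurring stay in $\Gamma_{2}$, and the evaluation points needed at the end (e.g. $(2,1),(1,1),(3,2)$) lie in $\Gamma_{2}$ with $\lambda\ne0$ since $t>2$. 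This forces $c^{\mathbf{k}}_{\mathbf{e}_{1}}=c^{\mathbf{k}}_{\mathbf{e}_{2}}$ and then $c^{\mathbf{k}}_{\mathbf{m}}=c^{\mathbf{k}}_{\mathbf{e}_{1}}=:\alpha^{\mathbf{k}}$ for all $\mathbf{m}\in\Gamma_{2}$, independent of $\mathbf{m}$; note $\mathbf{m}+\mathbf{k}\in\Gamma_{2}$ throughout, so no central component is produced.

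The remaining step—and the main obstacle—is to show $c^{\mathbf{k}}_{\mathbf{m}}=0$ for all $\mathbf{m}\in\Gamma_{2}$ when $\mathbf{k}\notin(t\mathbb{Z})^{2}$; this simultaneously kills the central components of $\varphi(L_{\mathbf{m}})$, since those correspond to shifts $\mathbf{k}\in-\mathbf{m}+(t\mathbb{Z})^{2}$, a coset disjoint from $(t\mathbb{Z})^{2}$ because $\mathbf{m}\notin(t\mathbb{Z})^{2}$. Specializing the master relation to $\mathbf{n}=\mathbf{e}_{1}$ and $\mathbf{n}=\mathbf{e}_{2}$ gives two recursions of exactly the form \eqref{eq:5.2}--\eqref{eq:5.3} with the shift $\mathbf{i}$ there replaced by $\mathbf{k}$; solving them expresses every $c^{\mathbf{k}}_{\mathbf{m}}$ ($\mathbf{m}\in\Gamma_{2}$) as an explicit rational-in-$q$ combination of $c^{\mathbf{k}}_{\mathbf{e}_{1}},c^{\mathbf{k}}_{\mathbf{e}_{2}}$, and substituting back at a few well-chosen pairs $(\mathbf{m},\mathbf{n})$ produces a linear system whose only solution is $c^{\mathbf{k}}_{\mathbf{e}_{1}}=c^{\mathbf{k}}_{\mathbf{e}_{2}}=0$. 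The difficulty—why this is not a verbatim copy of the proof of Theorem \ref{E}—is that, since $q^{t}=1$, the quantities $1-q^{k_{1}},1-q^{k_{2}},1-q^{k_{1}\pm1},\dots$ appearing as denominators in the generic argument can vanish; so one must split into subcases according to the residues of $k_{1},k_{2}$ modulo $t$ (the subcase where exactly one of $k_{1},k_{2}$ is divisible by $t$ paralleling Case~2 of Theorem \ref{E}, with a further split as in its Subcases~1--2, and the subcase $t\nmid k_{1},t\nmid k_{2}$ paralleling Case~3), and in each choose evaluation pairs that lie in $\Gamma_{2}$ and whose $\lambda$-coefficients are provably nonzero. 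Assembling the three parts gives the stated description of $\varphi$.
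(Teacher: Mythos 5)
Your overall architecture is the same as the paper's: write $\varphi(L_{\bf m})=\sum_{\bf k}c^{\bf k}_{\bf m}L_{\bf m+k}$, compare coefficients to get the master relation (this is exactly Eq.~\eqref{eq:4.2}), split according to whether the shift lies in $(t\mathbb{Z})^2$, run the Case~1 argument of Theorem~\ref{E} (adapted by replacing $m_j\neq 0$ with $t\nmid m_j$) to get a constant coefficient for shifts in $(t\mathbb{Z})^2$, and kill the remaining shifts. Your treatment of ${\bf m}\in\Gamma_1$ via centrality ($[\varphi(L_{\bf m}),L_{\bf n}]=0$ for all ${\bf n}$, so $\varphi(L_{\bf m})$ lies in the center) is a genuinely cleaner shortcut than the paper, which only gets this by the coefficient computations of its Cases~2 and~3; your remark that Lemma~\ref{B} is unavailable and that one must work coefficientwise is also apt (the paper does the same in substance). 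Note also that, like the paper's own proof (whose Case~1 treats ${\bf i}\in\Gamma_1\cup\{{\bf 0}\}$), your conclusion includes the shift ${\bf k}={\bf 0}$; it is the displayed statement of Theorem~\ref{F}, summing only over $\Gamma_1$, that drops the trivial component, not you.

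The gap is in the step you yourself flag as the main obstacle: proving $c^{\bf k}_{\bf m}=0$ for ${\bf m}\in\Gamma_2$ when ${\bf k}\notin(t\mathbb{Z})^2$. You assert that solving the ${\bf e}_1$- and ${\bf e}_2$-recursions expresses everything in $c^{\bf k}_{{\bf e}_1},c^{\bf k}_{{\bf e}_2}$ and that ``a few well-chosen pairs'' then force both to vanish, with subcase splits on the residues of $k_1,k_2$ modulo $t$; but no such pairs are exhibited, and at a root of unity this is precisely where the generic argument breaks (in Theorem~\ref{E}, Case~2 the conclusion rests on the nonvanishing of $(1-q^{i_1+1})(1+q^{i_1})(q^2-1)$, which can vanish when $q^t=1$). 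The paper does not merely re-choose evaluation points: for ${\bf i}\in(\mathbb{Z}\setminus t\mathbb{Z})\times t\mathbb{Z}$ it exploits relations that only exist at a root of unity, namely Eq.~\eqref{eq:333} (the ${\bf e}_2$-recursion at $m_1\in t\mathbb{Z}\setminus\{0\}$, where $q^{m_1}=1$), which together with the geometric solution \eqref{eq:4} evaluated at $m_1=0,t,2t$ (Eqs.~\eqref{eq:8.11}--\eqref{eq:8.14}) forces the homogeneous $(\tfrac12)^{m_1}$ part to die, and then the specialization $m_1=i_1$ in \eqref{eq:8.15}, plus the existence (using $t>2$) of $m_2$ with $q^{(m_2+1)i_1}\neq 1$, to get $\alpha^{\bf i}_{{\bf e}_2}=0$; the row ${\bf m}\in\mathbb{Z}\times t\mathbb{Z}$ is then handled by a separate pass through \eqref{eq:3}. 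None of this mechanism appears in your sketch, and without it the claim that the resulting linear system has only the zero solution in every residue subcase is unsubstantiated; the same caution applies to the mixed subcase $t\nmid k_1,\,t\nmid k_2$, where the validity of evaluation pairs such as $(2i_1,-2i_2)$ requires checking conditions like $t\nmid 2i_1$ that can fail. So the decisive computation of the theorem is missing rather than merely compressed.
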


\begin{proof}Let $\varphi$ be a $\frac{1}{2}$-derivation of $ \tilde{V} _{q}$. Then the $\mathbb{Z}^{2}$-grading of $ \tilde{V} _{q}$ induces the decomposition $\varphi=\underset{\textbf{i}\in \mathbb{Z}^{2}}{\Sigma}\varphi_{\textbf{i}}$, where $\varphi_{\textbf{i}}$ is a linear map $\tilde{V} _{q}\to\tilde{V} _{q}$ such that
$\varphi_{\textbf{i}} \left(L_{\textbf{m}}\right)\subseteq L_{\textbf{m}+\textbf{i}}$ for all $\textbf{m}\in\Gamma$. Since
$\varphi$ is a $\frac{1}{2}$-derivation of $ \tilde{V} _{q}$, then $\varphi_{\textbf{i}}$ is also a $\frac{1}{2}$-derivation of $ \tilde{V} _{q}$ for all $\textbf{i}\in\mathbb{Z}^{2}$.
We write $\varphi\left(L_{\textbf{m}}\right)=\underset{\textbf{i}\in \mathbb{Z}^{2}}{\Sigma}\alpha_{\textbf{m}}^{\textbf{i}}L_{\textbf{m}+\textbf{i}}$. Based on Definition \ref{A}, 
by applying $\varphi$ to Eq.\eqref{eq:44.1},  we obtain
$$2\varphi(\left[L _{\textbf{m}},L _{\textbf{n}}\right])=\left[\varphi\left(L_{\textbf{m}}\right),L_{\textbf{n}}\right]+\left[L _{\textbf{m}},\varphi\left(L_{\textbf{n}}\right)\right],\forall~\textbf{m},\textbf{n}\in\Gamma.$$
Then we have
\begin{equation}2\lambda\left(\textbf{m},\textbf{n}\right) \alpha_{\textbf{m}+\textbf{n}}^{\textbf{i}}=
\alpha_{\textbf{m}}^{\textbf{i}}\lambda\left(\textbf{m}+\textbf{i},\textbf{n}\right)+
\alpha_{\textbf{n}}^{\textbf{i}}\lambda\left(\textbf{m},\textbf{n}+\textbf{i}\right),~\forall~\textbf{m},\textbf{n}\in\Gamma.\label{eq:4.2}\end{equation}
To determine the coefficients, we need to consider the following cases.

\textbf{Case 1.} $\textbf{i}\in \Gamma_{1}\cup\{\bf 0\}$. 

From Eq.\eqref{eq:4.2}, it can be inferred that

\begin{equation}\lambda\left(\textbf{m},\textbf{n}\right)\left(2\alpha_{\textbf{m}+ \textbf{n}}^{\textbf{i}}-\alpha_{\textbf{m}}^{\textbf{i}}-\alpha_{\textbf{n}}^{\textbf{i}}\right)=0,\forall~\textbf{m},\textbf{n}\in\Gamma.\label{eq:4.3}\end{equation}
By a similar argument as $\alpha_{\bf m}$ for ${\bf m}\in \mathbb Z^2\setminus\{{\bf 0}\}$ in Case 1 of Theorem\ref{E}, we can prove $\alpha^{\bf i}_{\bf m}$ is a constant for all ${\bf m}\in \Gamma.$

Thus, for $\textbf{i}\in \Gamma_{1}\cup\{\bf 0\}$, $\exists$ $\alpha_{}^{\textbf{i}}\in \mathbb{C} $ such that $\forall ~\textbf{m}\in \Gamma_{2} $, $\varphi_{\textbf{i}} \left(L_{\textbf{m}}\right)=\alpha^{\textbf{i}}L_{\textbf{m}+\textbf{i}}.$

\textbf{Case 2.} $\textbf{i}=(i_{1},i_{2})\in (\mathbb{Z}\setminus t\mathbb{Z})\times t\mathbb{Z}$ or $t\mathbb{Z}\times (\mathbb{Z}\setminus t\mathbb{Z})$.
 
 Without loss of generality, we suppose $\textbf{i}\in(\mathbb{Z}\setminus t\mathbb{Z})\times t\mathbb{Z}$. 
 From Eq.\eqref{eq:4.2}, it can be inferred that $,
\forall~\textbf{m},\textbf{n}\in\Gamma$,
\begin{equation}\label{eq:1}
2\left(q^{m_{2}n_{1}}-q^{m_{1}n_{2}}\right) \alpha^{\bf i}_{\textbf{m}+\textbf{n}}=
\alpha^{\bf i}_{\textbf{m}}\left(q^{m_{2}n_{1}}-q^{(m_{1}+i_{1})n_{2}}\right)+\alpha^{\bf i}_{\textbf{n}}\left(q^{m_{2}(n_{1}+i_{1})}-q^{m_{1}n_{2}}\right).\end{equation}
Particularly, taking $\textbf{n}= \textbf{e}_{1}$  and $\textbf{e}_{2}$ in Eq.\eqref{eq:1}, respectively, we have
\begin{equation} 2\left(1-q^{m_{2}}\right) \alpha_{\textbf{m}+\textbf{e}_{1}}^{\textbf{i}}=
\alpha_{\textbf{m}}^{\textbf{i}}\left(1-q^{m_{2}}\right)+\alpha_{\textbf{e}_{1}}^{\textbf{i}}\left(1-q^{m_{2}(1+i_{1})}\right),~\forall~\textbf{m}\in\Gamma,\label{eq:2}\end{equation}
\begin{equation} 2\left(q^{m_{1}}-1\right) \alpha_{\textbf{m}+\textbf{e}_{2}}^{\textbf{i}}=
\alpha_{\textbf{m}}^{\textbf{i}}\left(q^{m_{1}+i_{1}}-1\right)+\alpha_{\textbf{e}_{2}}^{\textbf{i}}\left(q^{m_{1}}-q^{m_{2}i_{1}}\right),~\forall~\textbf{m}\in\Gamma.\label{eq:3}\end{equation}
Fix $m_{2}\in \mathbb{Z}\setminus t\mathbb{Z}$ and treat $\left (\alpha_{\textbf{m}}^{\textbf{i}}-
\frac{\alpha_{\textbf{e}_{1}}^{\textbf{i}}\left(1-q^{m_{2}(1+i_{1})}\right)}{1-q^{m_{2}}} \right ) _{m_{1}\in \mathbb{Z}}$ as a geometric sequence, then by Eq.\eqref{eq:2}, we get $\forall~\textbf{m}\in \mathbb{Z}\times(\mathbb{Z}\setminus t\mathbb{Z})$,
\begin{equation}\label{eq:4}
\alpha_{\textbf{m}}^{\textbf{i}}=\left(\alpha_{(0, m_{2})}^{\textbf{i}}-
\frac{\alpha_{\textbf{e}_{1}}^{\textbf{i}}\left(1-q^{m_{2}(1+i_{1})}\right)}{1-q^{m_{2}}}\right)\left(\frac{1}{2}\right)^{m_{1}}+
\frac{\alpha_{\textbf{e}_{1}}^{\textbf{i}}\left(1-q^{m_{2}(1+i_{1})}\right)}{1-q^{m_{2}}}.\end{equation}
By taking $m_{1} = 0$ and $m_{1}\in t\mathbb{Z}\setminus\left \{ 0 \right \} $ in Eq.\eqref{eq:3}, respectively, we get
\begin{equation}\label{eq:5}
\alpha_{(0, m_{2})}^{\textbf{i}}=\frac{\alpha_{\textbf{e}_{2}}^{\textbf{i}}\left(1-q^{m_{2}i_{1}}\right)}{1-q^{i_{1}}},~\forall~m_{2}\in \mathbb{Z}^{*},\end{equation}
\begin{equation}\alpha_{\textbf{m}}^{\textbf{i}}=\frac{\alpha_{\textbf{e}_{2}}^{\textbf{i}}\left(1-q^{m_{2} i_{1}}\right)}{1-q^{i_{1}}},~\forall~ \textbf{m}\in (t\mathbb{Z})^* \times\mathbb{Z}.\label{eq:333}\end{equation}
Through the above two equations, we know
\begin{equation}\alpha_{(0, m_{2})}^{\textbf{i}}=\alpha_{(t, m_{2})}^{\textbf{i}}=\alpha_{(2t, m_{2})}^{\textbf{i}}\cdots,~\forall~m_{2}\in \mathbb{Z}^{*}.\label{eq:8.11}\end{equation}
Taking $m_{1}=t$ and $m_{1}=2t$, respectively, by Eq.\eqref{eq:4}, we have $\forall~m_{2}\in \mathbb{Z}\setminus t\mathbb{Z},$
\begin{equation}\label{eq:8.12}
\alpha_{(t, m_{2})}^{\textbf{i}}=\left(\alpha_{(0, m_{2})}^{\textbf{i}}-
\frac{\alpha_{\textbf{e}_{1}}^{\textbf{i}}\left(1-q^{m_{2}(1+i_{1})}\right)}{1-q^{m_{2}}}\right)\left(\frac{1}{2}\right)^{t}+
\frac{\alpha_{\textbf{e}_{1}}^{\textbf{i}}\left(1-q^{m_{2}(1+i_{1})}\right)}{1-q^{m_{2}}},
\end{equation}
\begin{equation}\label{eq:8.13}
\alpha_{(2t, m_{2})}^{\textbf{i}}=\left(\alpha_{(0, m_{2})}^{\textbf{i}}-
\frac{\alpha_{\textbf{e}_{1}}^{\textbf{i}}\left(1-q^{m_{2}(1+i_{1})}\right)}{1-q^{m_{2}}}\right)\left(\frac{1}{2}\right)^{2t}+
\frac{\alpha_{\textbf{e}_{1}}^{\textbf{i}}\left(1-q^{m_{2}(1+i_{1})}\right)}{1-q^{m_{2}}}.
\end{equation}
By Eq.\eqref{eq:8.11}, Eq.\eqref{eq:8.12} and Eq.\eqref{eq:8.13}, we get
\begin{equation}\alpha_{(0, m_{2})}^{\textbf{i}}=\frac{\alpha_{\textbf{e}_{1}}^{\textbf{i}}\left(1-q^{m_{2}(1+i_{1})}\right)}{1-q^{m_{2}}},~\forall~m_{2}\in \mathbb{Z}\setminus t\mathbb{Z}.\label{eq:8.14}\end{equation}
By substituting  Eq.\eqref{eq:8.14} into Eq.\eqref{eq:4}, we have
\begin{equation}\alpha_{\textbf{m}}^{\textbf{i}}=\frac{\alpha_{\textbf{e}_{1}}^{\textbf{i}}\left(1-q^{m_{2}(1+i_{1})}\right)}{1-q^{m_{2}}},~\forall~\textbf{m}\in \mathbb{Z}\times(\mathbb{Z}\setminus t\mathbb{Z}).\label{eq:8.20}\end{equation}
Combining Eq.\eqref{eq:5}, Eq.\eqref{eq:8.14} and Eq.\eqref{eq:8.20}, we have
\begin{equation}\label{eq:8.30}
\alpha_{\textbf{m}}^{\textbf{i}}=\frac{\alpha_{\textbf{e}_{2}}^{\textbf{i}}\left(1-q^{m_{2} i_{1}}\right)}{1-q^{i_{1}}},~\forall~\textbf{m}\in \mathbb{Z}\times(\mathbb{Z}\setminus t\mathbb{Z}).
\end{equation}
By substituting Eq.\eqref{eq:8.30} into Eq.\eqref{eq:3}, we have
for those $\textbf{m}\in \mathbb{Z}\times(\mathbb{Z}\setminus t\mathbb{Z})$ such that ${\bf m}+{\bf e_2}\in \mathbb Z\times (\mathbb Z\setminus t\mathbb{Z})$(since $t\neq 2,$ such  ${\bf  m}$'s exist),
\begin{equation}\label{eq:8.15}
2\left(q^{m_{1}}-1\right) \frac{\alpha_{\textbf{e}_{2}}^{\textbf{i}}\left(1-q^{\left(m_{2}+1\right) i_{1}}\right)}{1-q^{i_{1}}}=
\frac{\alpha_{\textbf{e}_{2}}^{\textbf{i}}\left(1-q^{m_{2} i_{1}}\right)}{1-q^{i_{1}}}\left(q^{m_{1}+i_{1}}-1\right)+
\alpha_{\textbf{e}_{2}}^{\textbf{i}}\left(q^{m_{1}}-q^{m_{2}i_{1}}\right).\end{equation}
Particularly, by taking $m_{1}=i_{1}$ in Eq.\eqref{eq:8.15}, we know
\begin{equation*}\left(q^{(m_{2}+1)i_{1}}-1\right)\alpha_{\textbf{e}_{2}}^{\textbf{i}}=0,~\forall~m_{2}\in \mathbb{Z}\setminus t\mathbb{Z}.\label{eq:8.16}\end{equation*}
Since $q^{i_{1}}\not=1$, for $t\neq2$, there exists an $m_{2}\in \mathbb{Z}\setminus t\mathbb{Z}$ such that $q^{(m_{2}+1)i_{1}}-1\not=0$, we have
\begin{equation*}\alpha_{\textbf{e}_{2}}^{\textbf{i}}=0.\label{eq:8.42}\end{equation*}
By substituting $\alpha_{\textbf{e}_{2}}^{\textbf{i}}=0$ into Eq.\eqref{eq:8.30}, we have
\begin{equation}\label{eq:3.49}
\alpha_{\textbf{m}}^{\textbf{i}}=0,~\forall~ \textbf{m}\in \mathbb{Z}\times(\mathbb{Z}\setminus t\mathbb{Z}).
\end{equation}
And by Eq.\eqref{eq:5} and Eq.\eqref{eq:333}, we have
\begin{equation*}
\alpha_{\textbf{m}}^{\textbf{i}}=0,~\forall~\textbf{m}\in (t\mathbb{Z}\times\mathbb{Z})\setminus\left\{{\bf 0}\right\}.
\end{equation*}
For all ${\bf m}\in (\mathbb Z\times t\mathbb Z)\setminus\{(0,0)\}, {\bf m}-{\bf e_2}\in \mathbb Z\times (\mathbb Z\setminus t\mathbb Z),$ then by substituting $\alpha_{\textbf{e}_{2}}^{\textbf{i}}=0$ and Eq.\eqref{eq:3.49} into Eq.\eqref{eq:3},  we get
$$2\left(q^{m_{1}}-1\right)\alpha_{\bf m}^{\textbf{i}}=0,~\forall~{\bf m}\in(\mathbb Z\times t\mathbb Z)\setminus\{{\bf 0}\}.$$
then
$$\alpha_{\bf m}^{\textbf{i}}=0,~\forall~{\bf m}\in\left(\mathbb{Z}\setminus t\mathbb{Z}\right)\times t\mathbb{Z}.$$

In summary, for $\textbf{i}\in (\mathbb{Z}\setminus t\mathbb{Z})\times t\mathbb{Z}$ or $t\mathbb{Z}\times (\mathbb{Z}\setminus t\mathbb{Z})$, $\forall \textbf{m}\in\mathbb{Z}^{2}\setminus\left\{{\bf 0}\right\}$, $\alpha_{\textbf{m}}^{\textbf{i}}=0$,  and $\varphi_{\textbf{i}} \left(L_{\textbf{m}}\right)=0.$

\textbf{Case 3.} $\textbf{i}=(i_{1},i_{2})\in(\mathbb{Z}\setminus t\mathbb{Z} )\times(\mathbb{Z}\setminus t\mathbb{Z})$. 

By taking $\textbf{n}= \textbf{e}_{1}$  and $\textbf{e}_{2}$ in Eq.\eqref{eq:4.2}, respectively, we have
\begin{equation} \label{eq:0.1}
2\left(1-q^{m_{2}}\right) \alpha_{\textbf{m}+\textbf{e}_{1}}^{\textbf{i}}=
\alpha_{\textbf{m}}^{\textbf{i}}\left(1-q^{m_{2}+i_{2}}\right)+\alpha_{\textbf{e}_{1}}^{\textbf{i}}\left(q^{m_{1} i_{2}}-q^{m_{2}(1+i_{1})}\right),~\forall~\textbf{m}\in\Gamma,
\end{equation}
\begin{equation}\label{eq:0.2}
 2\left(q^{m_{1}}-1\right) \alpha_{\textbf{m}+\textbf{e}_{2}}=
\alpha_{\textbf{m}}^{\textbf{i}}\left(q^{m_{1}+i_{1}}-1\right)+\alpha_{\textbf{e}_{2}}^{\textbf{i}}\left(q^{m_{1}(1+i_{2})}-q^{m_{2} i_{1}}\right),~\forall~\textbf{m}\in\Gamma.\end{equation}
By taking $m_{2}\in t\mathbb{Z}$ in Eq.\eqref{eq:0.1}, we get
\begin{equation}\label{eq:0.51}
\alpha_{\textbf{m}}^{\textbf{i}}=\frac{\alpha_{\textbf{e}_{1}}^{\textbf{i}}\left(1-q^{m_{1} i_{2}}\right)}{1-q^{i_{2}}},~\forall~\textbf{m}\in(\mathbb{Z}\times t\mathbb{Z})\setminus\left\{{\bf 0}\right\}.\end{equation}
Particularly,
$$\alpha_{( m_{1},0)}^{\textbf{i}}=\frac{\alpha_{\textbf{e}_{1}}^{\textbf{i}}\left(1-q^{m_{1} i_{2}}\right)}{1-q^{i_{2}}},~\forall~m_{1}\in\mathbb{Z}^{*}.$$
By taking $m_{1}\in t\mathbb{Z}$ in Eq.\eqref{eq:0.2}, we get
\begin{equation}\label{eq:0.4}
\alpha_{\textbf{m}}^{\textbf{i}}=\frac{\alpha_{\textbf{e}_{2}}^{\textbf{i}}\left(1-q^{m_{2} i_{1}}\right)}{1-q^{i_{1}}},~\forall~\textbf{m}\in( t\mathbb{Z}\times\mathbb{Z} )\setminus\left\{{\bf 0}\right\}.
\end{equation}
Particularly,
$$\alpha_{(0, m_{2})}^{\textbf{i}}=\frac{\alpha_{\textbf{e}_{2}}^{\textbf{i}}\left(1-q^{m_{2} i_{1}}\right)}{1-q^{i_{1}}},~\forall ~m_{2}\in\mathbb{Z}^{*}.$$
By taking  $n_{1} = 0$, $m_{2} = 0$ in Eq.\eqref{eq:4.2}, we get $\forall~(m_{1},n_{2})\in\left(\mathbb{Z}\setminus t\mathbb{Z}\right)\times\left(\mathbb{Z}\setminus t\mathbb{Z}\right)$,
\begin{equation} \label{eq:0.00000}
2\left(1-q^{m_{1}n_{2}}\right) \alpha_{(m_{1}, n_{2})}^{\textbf{i}}=
\alpha_{( m_{1},0)}^{\textbf{i}}\left(1-q^{n_{2}(m_{1}+i_{1})}\right)+\alpha_{(0, n_{2})}^{\textbf{i}}\left(1-q^{m_{1}(n_{2}+i_{2})}\right),\end{equation}
By substituting $\alpha_{( m_{1},0)}^{\textbf{i}}$ and $\alpha_{(0, m_{2})}^{\textbf{i}}$ into the above equation, we have $\forall~(m_{1},n_{2})\in\left(\mathbb{Z}\setminus t\mathbb{Z}\right)\times\left(\mathbb{Z}\setminus t\mathbb{Z}\right)$,
\begin{eqnarray} 
&&2\left(1-q^{m_{1}n_{2}}\right) \alpha_{(m_{1}, n_{2})}^{\textbf{i}}\\
&=&
\frac{\alpha_{\textbf{e}_{1}}^{\textbf{i}}\left(1-q^{m_{1} i_{2}}\right)\left(1-q^{n_{2}(m_{1}+i_{1})}\right)}{1-q^{i_{2}}}+\frac{\alpha_{\textbf{e}_{2}}^{\textbf{i}}\left(1-q^{n_{2} i_{1}}\right)\left(1-q^{m_{1}(n_{2}+i_{2})}\right)}{1-q^{i_{1}}}.\nonumber
\end{eqnarray}
By substituting the above equation into Eq.\eqref{eq:4.2}, for those $\textbf{m},\textbf{n}\in\left(\mathbb{Z}\setminus t\mathbb{Z}\right)\times\left(\mathbb{Z}\setminus t\mathbb{Z}\right)$ such that $\textbf{m}+\textbf{n}\in\left(\mathbb{Z}\setminus t\mathbb{Z}\right)\times\left(\mathbb{Z}\setminus t\mathbb{Z}\right)$, we have
\begin{eqnarray*}
&&2\lambda\left(\textbf{m},\textbf{n}\right)
(\frac{\left(1-q^{(m_{2}+n_{2})\left(m_{1}+n_{1}+i_{1}\right)}\right)\left(1-q^{(m_{1}+n_{1}) i_{2}}\right)}{(1-q^{i_{2}})\left(1-q^{(m_{1}+n_{1})(m_{2}+n_{2})}\right)}\alpha_{\textbf{e}_{1}}\\
&&+\frac{\left(1-q^{(m_{1}+n_{1}) \left(m_{2}+n_{2}+i_{2} \right)}\right)\left(1-q^{(m_{2}+n_{2}) i_{1}}\right)}{(1-q^{i_{1})}\left(1-q^{(m_{1}+n_{1})(m_{2}+n_{2})}\right)}\alpha_{\textbf{e}_{2}})\\
&=&\lambda\left(\textbf{m}+\textbf{i},\textbf{n}\right)
\left(\frac{\left(1-q^{m_{2}\left(m_{1}+i_{1}\right)}\right)\left(1-q^{m_{1}i_{2}}\right)}{(1-q^{i_{2}})\left(1-q^{m_{1}m_{2}}\right)}\alpha_{\textbf{e}_{1}}+\frac{\left(1-q^{m_{1}\left(m_{2}+i_{2} \right)}\right)\left(1-q^{m_{2} i_{1}}\right)}{(1-q^{i_{1})}\left(1-q^{m_{1}m_{2}}\right)}\alpha_{\textbf{e}_{2}}\right)\\
&&+\lambda\left(\textbf{m},\textbf{n}+\textbf{i}\right)
\left(\frac{\left(1-q^{n_{2}\left(n_{1}+i_{1}\right)}\right)\left(1-q^{n_{1}i_{2}}\right)}{(1-q^{i_{2}})\left(1-q^{n_{1}n_{2}}\right)}\alpha_{\textbf{e}_{1}}+\frac{\left(1-q^{n_{1}\left(n_{2}+i_{2} \right)}\right)\left(1-q^{n_{2} i_{1}}\right)}{(1-q^{i_{1})}\left(1-q^{n_{1}n_{2}}\right)}\alpha_{\textbf{e}_{2}}\right).
\end{eqnarray*}
By the same argument as in Case 3 of Theorem\ref{E}, we know that
$$\alpha_{\textbf{e}_{1}}=\alpha_{\textbf{e}_{2}}=0.$$
And we know immediately that $\forall m_1, m_2\in\mathbb Z^*, \alpha_{(m_1,0)}=\alpha_{(0,m_2)}=0.$
By substituting $\alpha_{(m_1,0))}=0$ and $\alpha_{(0,m_2))}=0$ into Eq.\eqref{eq:0.00000}, we get
$$\alpha_{(m_{1},n_{2})}=0,~\forall ~(m_{1},n_{2})\in\left(\mathbb{Z}\setminus t\mathbb{Z}\right)\times\left(\mathbb{Z}\setminus t\mathbb{Z}\right).$$
i.e.
\begin{equation*}
\alpha_{\textbf{m}}=0,~\forall~ \textbf{m}\in\left(\mathbb{Z}\setminus t\mathbb{Z}\right)\times\left(\mathbb{Z}\setminus t\mathbb{Z}\right).\end{equation*}
By substituting $\alpha_{\textbf{e}_{1}}=0$ into Eq.\eqref{eq:0.51}, we get
\begin{equation*}\alpha_{\textbf{m}}=0,~\forall~\textbf{m}\in(\mathbb{Z}\times t\mathbb{Z})\setminus\left\{{\bf 0}\right\}.\end{equation*}
By substituting $\alpha_{\textbf{e}_{2}}=0$ into Eq.\eqref{eq:0.4}, we get
\begin{equation*}\alpha_{\textbf{m}}=0,~\forall~\textbf{m}\in( t\mathbb{Z}\times\mathbb{Z} )\setminus\left\{{\bf 0}\right\}.\end{equation*}
So
$$\alpha_{\textbf{m}}^{\textbf{i}}=0,~\forall~\textbf{m}\in\mathbb{Z}^{2}\setminus\left\{{\bf 0}\right\}.$$

In summary, for $\textbf{i}\in(\mathbb{Z}\setminus t\mathbb{Z} )\times(\mathbb{Z}\setminus t\mathbb{Z})$,   $\forall \textbf{m}\in\mathbb{Z}^{2}\setminus\left\{{\bf 0}\right\}$, $\alpha_{\textbf{m}}^{\textbf{i}}=0$, and $\varphi_{\textbf{i}} \left(L_{\textbf{m}}\right)=0.$

Hence combining the analysis of the three cases above, we obtain the desired result.
\end{proof}

Based on Lemma \ref{C} and Theorem \ref{F}. We can provide a comprehensive characterization of transposed Poisson algebra structures on the  algebra $\tilde{V} _{q}$.

\begin{theorem} \label{G}
Let $\left( \tilde{V} _{q},\cdot,\left[\cdot ,\cdot \right]\right)$ be a transposed Poisson structure defined on $\tilde{V} _{q}$. Then the multiplication on $\left( \tilde{V} _{q},\cdot\right)$ is given
by:
$$L _{\bf m}\cdot L _{\bf n}=\left\{\begin{matrix}
  0,   &  {\bf m}-{\bf n}\in\Gamma_{2}, \\
  \underset{{\bf i}\in \Gamma_{1}}{\Sigma}\alpha^{\bf i}_{\bf m}L_{{\bf n}+{\bf i}}, & {\bf m},{\bf n}\in\Gamma_{2}
  ~~\mathrm{such~that}~~{\bf m}-{\bf n}\in\Gamma_{1},\\
\underset{{\bf i}\in \Gamma_{1}}{\Sigma}\alpha^{\bf i}_{{\bf m},{\bf n}}L_{{\bf n}+{\bf i}},&{\bf m},{\bf n}\in\Gamma_{1}.
\end{matrix}\right.$$
where
$$\alpha^{\bf i}_{\bf m}=\alpha^{({\bf n}-{\bf m})+{\bf i}}_{\bf n},~\forall~{\bf m},{\bf n}\in\Gamma_{2}~
  such~ that~ {\bf m}-{\bf n}\in\Gamma_{1},~\forall ~{\bf i}\in \Gamma_{1},$$
$$\alpha^{\bf i}_{{\bf m},{\bf n}}=\alpha^{\left({\bf n}-{\bf m}\right)+{\bf i}}_{{\bf n},{\bf m}},~\forall~{\bf m},{\bf n}\in\Gamma_{2},~\forall~ {\bf i}\in \Gamma_{1}\setminus \left \{ -{\bf n} \right \} ,$$
and when at least one of ${\bf m}, {\bf n}, {\bf r}\in \Gamma_{1}$ is not equal to the others,
$$\underset{{\bf i}\in \Gamma_{1}}{\Sigma}
\alpha^{\bf i}_{{\bf m}, {\bf n}}\alpha^{\bf j}_{{\bf n}+{\bf i}, {\bf r}}=
\underset{{\bf i}\in \Gamma_{1}}{\Sigma}
\alpha^{\bf i}_{{\bf n}, {\bf r}}\alpha^{\bf j}_{{\bf r}+{\bf i}, {\bf m}}=0,~\forall~{\bf j}\in \Gamma_{1}\setminus \left \{ -{\bf m},-{\bf r} \right \}.$$
\end{theorem}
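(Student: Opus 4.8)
The plan is to invoke Lemma~\ref{Q}: a commutative associative product $\cdot$ on $\tilde V_q$ is a transposed Poisson structure exactly when, for every $z\in\tilde V_q$, the left multiplication $x\mapsto z\cdot x$ is a $\frac{1}{2}$-derivation, and by bilinearity it suffices to impose this for $z=L_{\bf m}$ with ${\bf m}\in\Gamma$. Thus the statement reduces to describing all commutative, associative products for which every $L_{\bf m}\cdot-$ is a $\frac{1}{2}$-derivation. The shape of such a $\frac{1}{2}$-derivation is already determined by Theorem~\ref{F}: applied to $\varphi:=L_{\bf m}\cdot-$, it yields $L_{\bf m}\cdot L_{\bf n}=\sum_{{\bf i}\in\Gamma_1}\alpha^{\bf i}_{\bf m}L_{{\bf n}+{\bf i}}$ with coefficients depending only on ${\bf m}$ when ${\bf n}\in\Gamma_2$, and $L_{\bf m}\cdot L_{\bf n}=\sum_{{\bf i}\in\Gamma_1}\alpha^{\bf i}_{{\bf m},{\bf n}}L_{{\bf n}+{\bf i}}$ when ${\bf n}\in\Gamma_1$; in all cases the product is supported in the coset ${\bf n}+(t\mathbb{Z})^2$. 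Because $\tilde V_q$ is not finitely generated, Lemma~\ref{B} is not available here; instead one uses the standing convention that these expansions are finite, which is exactly what lets Theorem~\ref{F} be applied to $L_{\bf m}\cdot-$.

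Next I would read the three branches of the multiplication table and the first two relations off commutativity alone. Expanding $L_{\bf m}\cdot L_{\bf n}=L_{\bf n}\cdot L_{\bf m}$ and comparing $\mathbb{Z}^2$-homogeneous components, the left side is supported in ${\bf n}+(t\mathbb{Z})^2$ and the right side in ${\bf m}+(t\mathbb{Z})^2$, so both sides must vanish as soon as ${\bf m}-{\bf n}\notin(t\mathbb{Z})^2$, i.e. ${\bf m}-{\bf n}\in\Gamma_2$; this is the first branch, and it absorbs every mixed pair with one index central and the other in $\Gamma_2$. When ${\bf m},{\bf n}\in\Gamma_2$ and ${\bf m}-{\bf n}\in\Gamma_1$, matching the coefficient of $L_{{\bf n}+{\bf i}}$ on the two sides gives $\alpha^{\bf i}_{\bf m}=\alpha^{({\bf n}-{\bf m})+{\bf i}}_{\bf n}$, the first relation, and the identical computation with ${\bf m},{\bf n}\in\Gamma_1$ gives $\alpha^{\bf i}_{{\bf m},{\bf n}}=\alpha^{({\bf n}-{\bf m})+{\bf i}}_{{\bf n},{\bf m}}$, the second. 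Combined with the shape coming from Theorem~\ref{F}, this also records the fact, used below, that for ${\bf a},{\bf b}\in\Gamma_1$ the product $L_{\bf a}\cdot L_{\bf b}$ has no component in degree ${\bf a}$ and none in degree ${\bf b}$.

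The remaining task, and the one that carries the real content, is to impose associativity. For a triple $L_{\bf m},L_{\bf n},L_{\bf r}$ I would expand $(L_{\bf m}\cdot L_{\bf n})\cdot L_{\bf r}$ and $L_{\bf m}\cdot(L_{\bf n}\cdot L_{\bf r})$ via the multiplication table and compare $\mathbb{Z}^2$-degrees. If all three indices lie in $\Gamma_2$, a short bookkeeping argument---reparametrising each coset-constant $\alpha^{\bf i}_{\bf m}$ by a fixed base point of its coset, exactly as for the first relation---rewrites both sides as one and the same manifestly symmetric expression in ${\bf m},{\bf n},{\bf r}$, so associativity holds automatically; mixed triples collapse to $0=0$ because central elements annihilate $\Gamma_2$ while products of central elements stay central. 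The genuinely constraining case is ${\bf m},{\bf n},{\bf r}\in\Gamma_1$: here $L_{\bf m}\cdot L_{\bf n}\cdot L_{\bf r}$ is a well-defined central element, symmetric in the three indices by commutativity and associativity, whose coefficient of $L_{{\bf r}+{\bf j}}$ equals $\sum_{{\bf i}\in\Gamma_1}\alpha^{\bf i}_{{\bf m},{\bf n}}\alpha^{\bf j}_{{\bf n}+{\bf i},{\bf r}}$ and whose coefficient of $L_{{\bf m}+{\bf j}}$ equals $\sum_{{\bf i}\in\Gamma_1}\alpha^{\bf i}_{{\bf n},{\bf r}}\alpha^{\bf j}_{{\bf r}+{\bf i},{\bf m}}$; comparing coefficients already forces the two sums to agree, and then feeding in the cyclic permutations of the associativity identity, the two commutativity relations, and the no-self-component facts from the previous step drives both sums to zero whenever ${\bf m},{\bf n},{\bf r}$ are not all equal---the excluded values ${\bf j}\in\{-{\bf m},-{\bf r}\}$ being precisely those at which one of the two sums degenerates into the vacuous coefficient of $L_{\bf 0}$. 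I expect this last deduction to be the main obstacle: one must choose the right permutations and the right specialisations of ${\bf j}$ so that the resulting linear system in the structure constants collapses to the asserted vanishing, and some care is needed around the boundary indices where $L_{\bf 0}=0$ makes certain coefficients meaningless.
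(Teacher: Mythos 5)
Your reduction via Lemma \ref{Q}, the application of Theorem \ref{F} to each left multiplication $L_{\bf m}\cdot-$, and the commutativity analysis (vanishing of $L_{\bf m}\cdot L_{\bf n}$ when ${\bf m}-{\bf n}\in\Gamma_{2}$, and the two coefficient identities when ${\bf m}-{\bf n}\in\Gamma_{1}$) all run parallel to the paper and are fine; so is the treatment of the all-$\Gamma_{2}$ and mixed triples in the associativity check, and the remark that one relies on the standing finiteness convention rather than Lemma \ref{B}. The genuine gap is the last step, which you yourself flag as ``the main obstacle'' and never carry out: for ${\bf m},{\bf n},{\bf r}\in\Gamma_{1}$ not all equal you only get as far as saying the two expansions of the triple product must ``agree'', and then you appeal to unspecified cyclic permutations, ``the right specialisations of ${\bf j}$'' and a linear system that is supposed to collapse to $\underset{{\bf i}\in\Gamma_{1}}{\Sigma}\alpha^{\bf i}_{{\bf m},{\bf n}}\alpha^{\bf j}_{{\bf n}+{\bf i},{\bf r}}=\underset{{\bf i}\in\Gamma_{1}}{\Sigma}\alpha^{\bf i}_{{\bf n},{\bf r}}\alpha^{\bf j}_{{\bf r}+{\bf i},{\bf m}}=0$. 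That vanishing is exactly the content of the theorem beyond Theorem \ref{F}, so leaving it as a plan is a missing proof, not a routine verification.

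For comparison, the paper's derivation of this step is direct and uses no permutation bookkeeping: it first rewrites $L_{\bf m}\cdot\left(L_{\bf n}\cdot L_{\bf r}\right)$ as $\left(L_{\bf n}\cdot L_{\bf r}\right)\cdot L_{\bf m}$ by commutativity, expands both triple products through the $\Gamma_{1}$ branch of the multiplication table so that associativity reads $\underset{{\bf j}\in\Gamma_{1}}{\Sigma}\bigl(\bigl(\underset{{\bf i}\in\Gamma_{1}}{\Sigma}\alpha^{\bf i}_{{\bf m},{\bf n}}\alpha^{\bf j}_{{\bf n}+{\bf i},{\bf r}}\bigr)L_{{\bf r}+{\bf j}}-\bigl(\underset{{\bf i}\in\Gamma_{1}}{\Sigma}\alpha^{\bf i}_{{\bf n},{\bf r}}\alpha^{\bf j}_{{\bf r}+{\bf i},{\bf m}}\bigr)L_{{\bf m}+{\bf j}}\bigr)=0$, and then, since at least one index differs (the case of exactly two equal indices is handled separately with ${\bf n}={\bf r}$, ${\bf m}\neq{\bf n}$ and the identity $\left(L_{\bf m}\cdot L_{\bf n}\right)\cdot L_{\bf n}=\left(L_{\bf n}\cdot L_{\bf n}\right)\cdot L_{\bf m}$), it invokes the linear independence of $\left\{L_{{\bf r}+{\bf j}},L_{{\bf m}+{\bf j}}\right\}_{{\bf j}\in\Gamma_{1}\setminus\{-{\bf m},-{\bf r}\}}$ (using ${\bf m}\neq{\bf r}$) to kill each coefficient sum separately. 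Your instinct that coefficient comparison only forces the sums to ``agree'' is in fact probing the delicate point here --- both index families lie in the coset $\left(t\mathbb{Z}\right)^{2}$, so ${\bf r}+{\bf j}$ can coincide with ${\bf m}+{\bf j}'$ for ${\bf j}'={\bf j}+{\bf r}-{\bf m}$ --- and it is precisely the paper's independence claim that is meant to dispatch it; if you want to bypass that claim by your permutation scheme, you must actually exhibit the argument, and as written the decisive implication is absent.
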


\begin{proof}
Let $\left( \tilde{V} _{q},\cdot,\left[\cdot ,\cdot \right]\right)$ be a transposed Poisson algebra, then $\left( \tilde{V} _{q},\cdot\right)$ is a commutative and associative algebra which satisfies Eq.\eqref{eq: 2.1}. For all $\textbf{m}\in\Gamma$, we denote the left multiplication by $L_{\textbf{m}}$ in $\left( \tilde{V} _{q},\cdot\right)$ as $\varphi ^{\textbf{m}}$, it follows that $\forall~\textbf{n}\in\Gamma$, $L _{\textbf{m}}\cdot L _{\textbf{n}}=\varphi ^{\textbf{m}}\left(L _{\textbf{n}}\right)$. Since $\cdot$ is commutative, we also have
$\forall~\textbf{m},\textbf{n}\in\Gamma$, $\varphi ^{\textbf{m}}\left(L _{\textbf{n}}\right)=\varphi ^{\textbf{n}}\left(L _{\textbf{m}}\right)$. By Lemma \ref{Q}, we know $\forall\textbf{m},\textbf{n}\in\Gamma $, $\varphi ^{\textbf{m}}$, $\varphi ^{\textbf{n}}\in\bigtriangleup\left(\tilde{V} _{q}\right)$. According to Theorem \ref{F}, for all $\textbf{m}\in\Gamma$, we write $$\varphi^{\textbf{m}} \left(L_{\textbf{n}}\right)=\left\{\begin{matrix}
 \underset{\textbf{i}\in \Gamma_{1}}{\Sigma}\alpha^{\textbf{i}}_{\textbf{m}} L_{\textbf{n}+\textbf{i}}, & \textbf{n}\in\Gamma_{2},\\
  \underset{\textbf{i}\in \Gamma_{1}}{\Sigma}\alpha^{\textbf{i}}_{\textbf{m},\textbf{n}} L_{\textbf{n}+\textbf{i}},&\textbf{n}\in\Gamma_{1}
\end{matrix}\right.$$
and $$\varphi ^{\textbf{n}}\left(L _{\textbf{m}}\right)=\left\{\begin{matrix}
 \underset{\textbf{i}\in \Gamma_{1}}{\Sigma}\alpha^{\textbf{i}}_{\textbf{n}} L_{\textbf{m}+\textbf{i}}, & \textbf{m}\in\Gamma_{2},\\
  \underset{\textbf{i}\in \Gamma_{1}}{\Sigma}\alpha^{\textbf{i}}_{\textbf{n},\textbf{m}} L_{\textbf{m}+\textbf{i}},&\textbf{m}\in\Gamma_{1}.
\end{matrix}\right.$$

Now we need to consider the following cases:

\textbf{Case 1.} $\textbf{m}-\textbf{n}\in \Gamma_{2}$.

\textbf{Subcase 1.} $\textbf{m},\textbf{n}\in \Gamma_{2}$. 

We know that
$$0=L _{\textbf{m}}\cdot L _{\textbf{n}}-L _{\textbf{n}}\cdot L _{\textbf{m}}=\varphi ^{\textbf{m}}\left(L _{\textbf{n}}\right)-\varphi ^{\textbf{n}}\left(L _{\textbf{m}}\right)=\underset{\textbf{i}\in \Gamma_{1}}{\Sigma}
\alpha^{\textbf{i}}_{\textbf{m}}L_{\textbf{n}+\textbf{i}}-\underset{\textbf{i}\in \Gamma_{1}}{\Sigma}
\alpha^{\textbf{i}}_{\textbf{n}}L_{\textbf{m}+\textbf{i}}.$$
Since $\textbf{m}-\textbf{n}\in \Gamma_{2}$, then $\textbf{m}\not=\textbf{n}$ and for all $\textbf{i}\in \Gamma_{1}$, $\textbf{n}+\textbf{i}\not=\textbf{m}+\textbf{i}$ and $\left \{ L_{\textbf{n}+\textbf{i}},L_{\textbf{m}+\textbf{i}} \right \}_{\textbf{i}\in\Gamma_{1}} $ is linear independent, it follows that $\forall~\textbf{i}\in\Gamma_{1}$,
$\alpha^{\textbf{i}}_{\textbf{m}}=\alpha^{\textbf{i}}_{\textbf{n}}=0$ and $L _{\textbf{m}}\cdot L _{\textbf{n}}=L _{\textbf{n}}\cdot L _{\textbf{m}}=0$.

\textbf{Subcase 2.} $\textbf{m}\in \Gamma_{1}$, $\textbf{n}\in \Gamma_{2}$ or $\textbf{m}\in \Gamma_{2}$, $\textbf{n}\in \Gamma_{1}$. 

Without loss of generality, we suppose $\textbf{m}\in \Gamma_{1}$, $\textbf{n}\in \Gamma_{2}$. Then $\textbf{m}\not=\textbf{n}$, and for all $\textbf{i}\in \Gamma_{1}$, $\textbf{n}+\textbf{i}\not=\textbf{m}+\textbf{i}$. Since $$0=L _{\textbf{m}}\cdot L _{\textbf{n}}-L _{\textbf{n}}\cdot L _{\textbf{m}}=\varphi ^{\textbf{m}}\left(L _{\textbf{n}}\right)-\varphi ^{\textbf{n}}\left(L _{\textbf{m}}\right)=\underset{\textbf{i}\in \Gamma_{1}}{\Sigma}
\alpha^{\textbf{i}}_{\textbf{m}}L_{\textbf{n}+\textbf{i}}-\underset{\textbf{i}\in \Gamma_{1}}{\Sigma}\alpha^{\textbf{i}}_{\textbf{n},\textbf{m}}L_{\textbf{m}+\textbf{i}},$$ by the same argument as in Subcase 1, we get \begin{center}$\forall~\textbf{i}\in\Gamma_{1}$,
$\alpha^{\textbf{i}}_{\textbf{m}}=\alpha^{\textbf{i}}_{\textbf{n},\textbf{m}}=0$ and $L _{\textbf{m}}\cdot L _{\textbf{n}}=L _{\textbf{n}}\cdot L _{\textbf{m}}=0$.\end{center}

\textbf{Case 2.} $\textbf{m}-\textbf{n}\in \Gamma_{1}$.

\textbf {Subcase 1.} $\textbf{m},\textbf{n}\in \Gamma_{2}$.
 
 We know that
\begin{eqnarray*}
0&=&L _{\textbf{m}}\cdot L _{\textbf{n}}-L _{\textbf{n}}\cdot L _{\textbf{m}}
=\underset{\textbf{i}\in \Gamma_{1}}{\Sigma}
\alpha^{\textbf{i}}_{\textbf{m}}L_{\textbf{n}+\textbf{i}}-\underset{\textbf{j}\in \Gamma_{1}}{\Sigma}
\alpha^{\textbf{j}}_{\textbf{n}}L_{\textbf{m}+\textbf{j}}\\
&=&\underset{\textbf{i}\in \Gamma_{1}}{\Sigma}\alpha^{\textbf{i}}_{\textbf{m}}L_{\textbf{n}+\textbf{i}}
-\underset{\textbf{i}\in \Gamma_{1}}{\Sigma}\alpha^{\textbf{i+(n-m)}}_{\textbf{n}}L_{\textbf{n}+\textbf{i}}
=\underset{\textbf{i}\in \Gamma_{1}}{\Sigma}\left(\alpha^{\textbf{i}}_{\textbf{m}}-\alpha^{\textbf{i+(n-m)}}_{\textbf{n}}\right)L_{\textbf{n}+\textbf{i}}.\end{eqnarray*}
Since $\left \{ L_{\textbf{n}+\textbf{i}} \right \}_{\textbf{i}\in\Gamma_{1}} $ is linear independent, then $\forall~\textbf{i}\in \Gamma_{1}$, $\alpha^{\textbf{i}}_{\textbf{m}}=\alpha^{\textbf{i+(n-m)}}_{\textbf{n}}$.

\textbf{Subcase 2.} $\textbf{m},\textbf{n}\in \Gamma_{1}$. 

We know that
\begin{eqnarray*}
0&=&L _{\textbf{m}}\cdot L _{\textbf{n}}-L _{\textbf{n}}\cdot L _{\textbf{m}}
=\underset{\textbf{i}\in \Gamma_{1}}{\Sigma}
\alpha^{\textbf{i}}_{\textbf{m},\textbf{n}}L_{\textbf{n}+\textbf{i}}-\underset{\textbf{j}\in \Gamma_{1}}{\Sigma}
\alpha^{\textbf{j}}_{\textbf{n},\textbf{m}}L_{\textbf{m}+\textbf{j}}\\
&=&\underset{\textbf{i}\in \Gamma}{\Sigma}\alpha^{\textbf{i}}_{\textbf{m},\textbf{n}}L_{\textbf{n}+\textbf{i}}
-\underset{\textbf{i}\in \Gamma_{1}}{\Sigma}\alpha^{\textbf{i+(n-m)}}_{\textbf{n},\textbf{m}}L_{\textbf{n}+\textbf{i}}
=\underset{\textbf{i}\in \Gamma_{1}}{\Sigma}\left(\alpha^{\textbf{i}}_{\textbf{m},\textbf{n}}-\alpha^{\textbf{i+(n-m)}}_{\textbf{n},\textbf{m}}\right)L_{\textbf{n}+\textbf{i}}.\end{eqnarray*}
Since $\left \{ L_{\textbf{n}+\textbf{i}} \right \}_{\textbf{i}\in\Gamma_{1}\setminus \left \{ -\textbf{n} \right \}} $ is linear independent, then $\forall~\textbf{i}\in \Gamma_{1}\setminus \left \{ -\textbf{n} \right \} $, $\alpha^{\textbf{i}}_{\textbf{m},\textbf{n}}=\alpha^{\textbf{i+(n-m)}}_{\textbf{n},\textbf{m}}$.

Next, we consider the associative law. For all $\textbf{m},\textbf{n},\textbf{r}\in\mathbb{Z}^{2}\setminus\left\{\left(0,0\right)\right\}$, we have
\begin{equation}\label{eq:666}
\left(L_{\textbf{m}}\cdot L_{\textbf{n}}\right)\cdot L_{\textbf{r}}=L_{\textbf{m}}\cdot\left(L_{\textbf{n}}\cdot L_{\textbf{r}}\right).
\end{equation}

Based on the preceding analysis, it is easy to see if one or two of $\textbf{m}, \textbf{n}$ and $\textbf{r}$ belong to $\Gamma_2,$ then both sides of Eq.\eqref{eq:666} are zero. So, it suffices to consider the scenarios where $\textbf{m},\textbf{n},\textbf{r}\in\Gamma_{2}$ and $\textbf{m},\textbf{n},\textbf{r}\in\Gamma_{1}$. 
Now we consider these two cases.

\textbf{Case 1.} $\textbf{m},\textbf{n},\textbf{r}\in\Gamma_{2}$.

If $\textbf{m}-\textbf{n}\in \Gamma_2$ or $\textbf{n}-\textbf{r}\in \Gamma_2,$ then both sides of Eq.\eqref{eq:666} are zero. Now we suppose $\textbf{m}-\textbf{n}, \textbf{n}-\textbf{r}\in \Gamma_1.$ According to Theorem \ref{F}, we have 
\begin{eqnarray*}
(L_{\bf m}\cdot L_{\bf n})\cdot L_{\bf r}&=&(\sum\limits_{{\bf i}\in \Gamma_1}\alpha_{\bf m}^{\bf i}L_{\bf n+i})\cdot L_{\bf r}
=\sum\limits_{{\bf i}\in \Gamma_1}\alpha_{\bf m}^{\bf i}L_{\bf r}\cdot L_{\bf n+i}\\
&=&\sum\limits_{{\bf i}\in \Gamma_1}\alpha_{\bf m}^{\bf i}\sum\limits_{{\bf j}\in \Gamma_1}\alpha_{\bf r}^{\bf j}L_{\bf n+i+j}=\sum\limits_{{\bf i, j}\in \Gamma_1}\alpha_{\bf m}^{\bf i}\alpha_{\bf r}^{\bf j}L_{\bf n+i+j}
\end{eqnarray*}
and 
\begin{eqnarray*}
L_{\bf m}\cdot (L_{\bf n}\cdot L_{\bf r})&=&L_{\bf m}\cdot (L_{\bf r}\cdot L_{\bf n})
=L_{\bf m}\cdot\sum\limits_{{\bf j}\in \Gamma_1}\alpha_{\bf r}^{\bf j}L_{\bf n+j}\\
&=&\sum\limits_{{\bf j}\in \Gamma_1}\alpha_{\bf r}^{\bf j}  \sum\limits_{{\bf i}\in \Gamma_1}\alpha_{\bf m}^{\bf i}L_{\bf n+i+j}=\sum\limits_{{\bf i, j}\in \Gamma_1}\alpha_{\bf r}^{\bf j}\alpha_{\bf m}^{\bf i}L_{\bf n+i+j}.
\end{eqnarray*}
Thus, $(L_{\bf m}\cdot L_{\bf n})\cdot L_{\bf r}=L_{\bf m}\cdot (L_{\bf n}\cdot L_{\bf r}).$

\textbf{Case 2.} $\textbf{m},\textbf{n},\textbf{r}\in\Gamma_{1}$.

\textbf{Subcase 1.} $\textbf{m}=\textbf{n}=\textbf{r}.$

It is trivial.

\textbf{Subcase 2.} Two of $\textbf{m},\textbf{n},\textbf{r}$ are equal.
 
 Without loss of generality, we suppose $\textbf{n}=\textbf{r}$ and $\textbf{m}\not=\textbf{n}.$ By the associativity and commutativity, we have
$$\begin{aligned}
0&=\left(L_{\textbf{m}}\cdot L_{\textbf{n}}\right)\cdot L_{\textbf{n}}-L_{\textbf{m}}\cdot\left(L_{\textbf{n}}\cdot L_{\textbf{n}}\right)\\
&= \left(L_{\textbf{m}}\cdot L_{\textbf{n}}\right)\cdot L_{\textbf{n}}-\left(L_{\textbf{n}}\cdot L_{\textbf{n}}\right)\cdot L_{\textbf{m}}\\
&=\left ( \underset{\textbf{i}\in \Gamma_{1}}{\Sigma}\alpha^{\textbf{i}}_{\textbf{m},\textbf{n}}L_{\textbf{n}+\textbf{i}} \right )\cdot L_{\textbf{n}}
-\left ( \underset{\textbf{i}\in \Gamma_{1}}{\Sigma}\alpha^{\textbf{i}}_{\textbf{n},\textbf{n}}L_{\textbf{n}+\textbf{i}} \right ) \cdot L_{\textbf{m}}\\
&=\underset{\textbf{j}\in \Gamma_{1} }{\Sigma}\underset{\textbf{i}\in \Gamma_{1}}{\Sigma}\alpha^{\textbf{i}}_{\textbf{m},\textbf{n}}\alpha^{\textbf{j}}_
{\textbf{n}+\textbf{i},\textbf{n}}L_{\textbf{n}+\textbf{j}}
-\underset{\textbf{j}\in \Gamma_{1} }{\Sigma}\underset{\textbf{i}\in \Gamma_{1}}{\Sigma}\alpha^{\textbf{i}}_{\textbf{n},\textbf{n}}\alpha^{\textbf{j}}_
{\textbf{n}+\textbf{i},\textbf{m}}L_{\textbf{m}+\textbf{j}}
 \\
&=\underset{\textbf{j}\in \Gamma_{1} }{\Sigma}\left (\left ( \underset{\textbf{i}\in \Gamma_{1}}{\Sigma}\alpha^{\textbf{i}}_{\textbf{m},\textbf{n}}\alpha^{\textbf{j}}_
{\textbf{n}+\textbf{i},\textbf{n}}\right)L_{\textbf{n}+\textbf{j}}
-\left (\underset{\textbf{i}\in \Gamma_{1}}{\Sigma}\alpha^{\textbf{i}}_{\textbf{n},\textbf{n}}\alpha^{\textbf{j}}_{\textbf{n}+\textbf{i},\textbf{m}}\right)L_{\textbf{m}+\textbf{j}}\right).
\end{aligned}$$
Since $\textbf{m}\not=\textbf{n}$, then $\forall~\textbf{j}\in \Gamma_{1}$, $\textbf{m}+\textbf{j}\not=\textbf{n}+\textbf{j}$ and $\left \{ L_{\textbf{n}+\textbf{j}},L_{\textbf{m}+\textbf{j}} \right \}_{\textbf{j}\in\Gamma_{1}\setminus \left \{ -\textbf{m},-\textbf{n} \right \}} $ is linear independent. Thus we obtain $\forall~\textbf{j}\in \Gamma_{1}\setminus \left \{ -\textbf{m},-\textbf{n} \right \}$,  $\underset{\textbf{i}\in \Gamma_{1}}{\Sigma}\alpha^{\textbf{i}}_{\textbf{m},\textbf{n}}\alpha^{\textbf{j}}_{\textbf{n}+\textbf{i},\textbf{n}}=\underset{\textbf{i}\in \Gamma_{1}}{\Sigma}\alpha^{\textbf{i}}_{\textbf{n},\textbf{n}}\alpha^{\textbf{j}}_{\textbf{n}+\textbf{i},\textbf{m}}=0$.

\textbf{Case 3.} Among $\textbf{m},\textbf{n},\textbf{r}$, there are no equal pairs.
 
 We know that
$$\begin{aligned}
0&=\left(L_{\textbf{m}}\cdot L_{\textbf{n}}\right)\cdot L_{\textbf{r}}-L_{\textbf{m}}\cdot\left(L_{\textbf{n}}\cdot L_{\textbf{r}}\right)\\
&=\left ( \underset{\textbf{i}\in \Gamma_{1}}{\Sigma}\alpha^{\textbf{i}}_{\textbf{m},\textbf{n}}L_{\textbf{n}+\textbf{i}} \right )\cdot L_{\textbf{r}}
-\left ( \underset{\textbf{i}\in \Gamma_{1}}{\Sigma}\alpha^{\textbf{i}}_{\textbf{n},\textbf{r}}L_{\textbf{r}+\textbf{i}} \right )\cdot L_{\textbf{m}}
\\
&=\underset{\textbf{i}\in \Gamma_{1} }{\Sigma}\alpha^{\textbf{i}}_{\textbf{m},\textbf{n}}\underset{\textbf{j}\in \Gamma_{1}}{\Sigma}\alpha^{\textbf{j}}_
{\textbf{n}+\textbf{i},\textbf{r}}L_{\textbf{r}+\textbf{j}}
-\underset{\textbf{i}\in \Gamma_{1} }{\Sigma}\alpha^{\textbf{i}}_{\textbf{n},\textbf{r}}\underset{\textbf{j}\in \Gamma_{1}}{\Sigma}\alpha^{\textbf{j}}_
{\textbf{r}+\textbf{i},\textbf{m}}L_{\textbf{m}+\textbf{j}}
\\
&=\underset{\textbf{j}\in \Gamma_{1}}{\Sigma}\left ( \left ( \underset{\textbf{i}\in \Gamma_{1} }{\Sigma}\alpha^{\textbf{i}}_{\textbf{m},\textbf{n}}\alpha^{\textbf{j}}_
{\textbf{n}+\textbf{i},\textbf{r}} \right ) L_{\textbf{r}+\textbf{j}}
-\left ( \underset{\textbf{i}\in \Gamma_{1} }{\Sigma}\alpha^{\textbf{i}}_{\textbf{n},\textbf{r}}\alpha^{\textbf{j}}_
{\textbf{r}+\textbf{i},\textbf{m}} \right ) L_{\textbf{m}+\textbf{j}} \right )
\end{aligned}.$$
Since $\textbf{m}\not=\textbf{r}$, then $\forall~\textbf{j}\in \Gamma_{1}$, $\textbf{r}+\textbf{j}\not=\textbf{m}+\textbf{j}$, and $\left \{ L_{\textbf{r}+\textbf{j}},L_{\textbf{m}+\textbf{j}} \right \}_{\textbf{j}\in\Gamma_{1}\setminus \left \{ -\textbf{m},-\textbf{r} \right \}} $ is linear independent. Thus we obtain $\forall~\textbf{j}\in \Gamma_{1}\setminus \left \{ -\textbf{m},-\textbf{r} \right \}$,
$ \underset{\textbf{i}\in \Gamma_{1} }{\Sigma}\alpha^{\textbf{i}}_{\textbf{m},\textbf{n}}\alpha^{\textbf{j}}_{\textbf{n}+\textbf{i},\textbf{r}}
=\underset{\textbf{i}\in \Gamma_{1} }{\Sigma}\alpha^{\textbf{i}}_{\textbf{n},\textbf{r}}\alpha^{\textbf{j}}_{\textbf{r}+\textbf{i},\textbf{m}} =0$.

Based on the analysis above, we obtain the desired result.
\end{proof}

\section{Transposed Poisson structures on $q$-Quantum Torus Lie algebra}\label{sec4}

In this section, we consider the similar problems for the $q$-quantum torus Lie algebra. We first recall the definition for this algebra.

Let $p\in\mathbb{N}^{*}$, a quantum torus $\mathbb{C}_{q} =\mathbb{C}_{q}\left[x_{1}^{\pm 1},\cdots x_{p}^{\pm 1}\right] $ is an associative and noncommutative polynomial algebra subject to the defining relations $x_{i}x_{j}=q_{i,j}x_{j}x_{i}$ and $x_{i}^{-1} x_{i}=x_{i}x_{i}^{-1}=1 $ for $1\le i,j\le p$, where the quantum torus matrix $q=\left(q_{i,j}\right) $ is a $p\times p$ matrix with nonzero complex entries satisfying $q_{i,j}=1$ and $q_{i,j}^{-1}=q_{j,i}$ for $1\le i,j\le p$. 

For the case $p=2$, we identify the quantum torus matrix $q=\left(q_{i,j}\right)_{2\times 2}$ with its entry $q_{1,2}$.
We denote ${\rm Der}_{skew}\left(\mathbb{C}_{q}\right)$ as the skew derivation algebra of $\mathbb{C}_{q}$ and set $ \hat{L(q)}=\mathbb{C}_{q}\oplus {\rm Der}_{skew}\left(\mathbb{C}_{q}\right)$, then $\hat{L( q  )}$ forms a Lie algebra of differential operators acting on the quantum torus $\mathbb{C}_{q}$. Let $L(q)$ be the derived subalgebra of $\hat{L( q  )}$, then $L(q)$ is a perfect Lie algebra.

 
Let $q$ be a $t$th primitive root of unity, then $L(q)$ is isomorphic to a Lie algebra with a basis $\{x^{\bf m}, D(\bf m)\mid m\in \Gamma\}$, where $x^{\textbf{m}}=x_{1}^{m_{1}}x_{2}^{m_{2}} $, 
$D\left(\textbf{m}\right)={\rm ad}x^{\textbf{m}}$,
 for $\textbf{m}\in\Gamma_2$ 
and $D\left(\textbf{m}\right)=x^{\textbf{m}}\left(m_{2}d_{1}-m_{1}d_{2}\right)$ for $\textbf{m}\in\Gamma_{1} $, where $d_{i}=x_{i}\partial_{i} $, $\partial_{i} $ is the usual derivative with respect to variable $x_{i}$, that is $d_{i}(x^{\textbf{m}})=m_{i} x^{\textbf{m}}$.
 And the Lie algebra structure of $L(q)$ is given by the  following anticommutative product: $\forall {\bf m, n}\in \Gamma,$ 
\begin{equation}\left[ x^{\textbf{m}},x^{\textbf{n}} \right ] =\lambda\left(\textbf{m},\textbf{n}\right)x^{\textbf{m+n}},\label{eq:11.0} \end{equation}
\begin{equation}\left[D\left(\textbf{m}\right),x^{\textbf{n}}\right]=h\left(\textbf{m},\textbf{n}\right)x^{\textbf{m+n}},\label{eq:11.1}\end{equation}
\begin{equation}\left[D\left(\textbf{m}\right),D\left(\textbf{n}\right)\right]=g\left(\textbf{m},\textbf{n}\right)D\left(\textbf{m+n}\right),\label{eq:11.2}\end{equation}
where
$$h\left(\textbf{m},\textbf{n}\right)=\left\{\begin{matrix}
 \mathrm{det} \binom{\textbf{n}}{\textbf{m}}, & \textbf{m}\in \Gamma _{1}, \\
 \lambda\left(\textbf{m},\textbf{n}\right), &\textbf{m}\in \Gamma _{2},
\end{matrix}\right.$$

$$g\left(\textbf{m},\textbf{n}\right)=\left\{\begin{matrix}
 \lambda\left(\textbf{m},\textbf{n}\right), & \textbf{m},\textbf{n}\in \Gamma _{2}, \\
 \mathrm{det} \binom{\textbf{n}}{\textbf{m}}, & otherwise
\end{matrix}\right.$$
and $\mathrm{det} \binom{\textbf{n}}{\textbf{m}}=m_{2}n_{1}-m_{1} n_{2}$, $\lambda({\bf m, n})$ is defined in Section \ref{sec3}.
 
When $q$ is generic, that is, $q\in \mathbb C^*$ is not a root of unity, then $\Gamma_1=\emptyset$ and $L(q)$ becomes de the derived Lie subalgebra studied in \cite{bib27}, we denote this algebra as $\tilde{L(q)}$ in this paper. The Lie algebra structure of $\tilde{L(q)}$
 is given by the following anticommutative product:
 \begin{equation}\label{eq:401}
\left[ x^{\textbf{m}},x^{\textbf{n}} \right ] =\lambda\left(\textbf{m},\textbf{n}\right)x^{\textbf{m+n}}, 
\end{equation}
\begin{equation}\label{eq:402}
\left[D\left(\textbf{m}\right),x^{\textbf{n}}\right]=\lambda\left(\textbf{m},\textbf{n}\right)x^{\textbf{m+n}},
\end{equation}
\begin{equation}\label{eq:403}
[D({\bf m}), D({\bf n})]=\lambda({\bf m, n})D({\bf m+n}),
\end{equation}
where $\lambda({\bf m, n})$ is defined in Section \ref{sec3}.

For convenience, in both of the two cases,
we designate $x^{\bf 0}=0
$, $D({\bf 0})=0 $ .
\subsection{The case where $q$ is generic for the $q$-Quantum Torus Lie algebra}\label{subsec4.1}
Observe that $\tilde{L(q)}= \underset{\textbf{m}\in \mathbb{Z}^{2} }{\oplus} \left (\tilde{ L(q)} \right ) _{\textbf{m}}$ is a $\mathbb{Z}^{2}$-grading, where
$\left ( \tilde{L(q)} \right ) _{\bf{m}}=
\mathbb{C} x^{\textbf{m}}+\mathbb CD({\bf m}),$  for $\textbf{m}\in\Gamma.$ In addition, $\tilde{L(q)}$ is finitely generated. 
\begin{lemma}\label{lem4.1}
The $q$-quantum torus Lie algebra $\tilde{L(q)}$ is the Lie algebra generated by the set $\left \{ x^{(0,\pm1)},x^{(\pm 1,0)},D(0,\pm1),D(\pm 1,0) \right \}$, subject to the following relations:
\begin{eqnarray*}
\left[x^{(1,0)},x^{(-1,0)}\right]&=&\left[x^{(0,1)},x^{(0,-1)}\right]=0,\\
\left[D(1,0),D(-1,0)\right]&=&\left[D(0,1),D(0,-1)\right]=0,\\
\left[\left[x^{(1,0)},x^{(0,1)}\right],x^{(-1,0)}\right]&=&(1-q)(q^{-1}-1)x^{(0,1)},\\
\left[\left[x^{(1,0)},x^{(0,-1)}\right],x^{(-1,0)}\right]&=&(1-q^{-1})(q-1)x^{(0,-1)},\\
\left[\left[x^{(0,1)},x^{(1,0)}\right],x^{(0,-1)}\right]&=&(q-1)(1-q^{-1})x^{(1,0)},\\
\left[\left[x^{(0,1)},x^{(-1,0)}\right],x^{(0,-1)}\right]&=&(q^{-1}-1)(1-q)x^{(-1,0)},\\
\left[\left[D(1,0),D(0,1)\right],D(-1,0)\right]&=&(1-q)(q^{-1}-1)D(0,1),\\
\left[\left[D(1,0),D(0,-1)\right],D(-1,0)\right]&=&(1-q^{-1})(q-1)D(0,-1),\\
\left[\left[D(0,1),D(1,0)\right],D(0,-1)\right]&=&(q-1)(1-q^{-1})D(1,0),\\
\left[\left[D(0,1),D(-1,0)\right],D(0,-1)\right]&=&(q^{-1}-1)(1-q)D(-1,0).\end{eqnarray*}
\end{lemma}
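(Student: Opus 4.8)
The plan is to establish three claims: the eight listed elements generate $\tilde{L(q)}$; the listed relations hold in $\tilde{L(q)}$; and these relations already determine the algebra, i.e. the Lie algebra $\mathfrak{L}$ abstractly presented by them maps isomorphically onto $\tilde{L(q)}$. The first two claims are short, and the third is where the content lies.

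The organizing observation is that $\tilde{L(q)}$ is built from two copies of the $q$-Virasoro-like algebra. By \eqref{eq:401} the span of $\{x^{\bf m}\}_{{\bf m}\in\mathbb{Z}^{2}\setminus\{{\bf 0}\}}$ is a subalgebra isomorphic to $V_{q}$, so by Lemma 4.1 in \cite{bib28} it is generated by $\{x^{(\pm1,0)},x^{(0,\pm1)}\}$; by \eqref{eq:403} the span of $\{D({\bf m})\}_{{\bf m}\in\mathbb{Z}^{2}\setminus\{{\bf 0}\}}$ is again a copy of $V_{q}$, generated by $\{D(\pm1,0),D(0,\pm1)\}$. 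Since $\tilde{L(q)}$ is spanned by all the $x^{\bf m}$ and all the $D({\bf m})$, the eight elements generate it; in particular $\tilde{L(q)}$ is finitely generated, which is what makes Lemma \ref{B} applicable to $\tilde{L(q)}$. That the stated relations hold is a direct substitution into \eqref{eq:401}--\eqref{eq:403}: for example $[x^{(1,0)},x^{(-1,0)}]=\lambda((1,0),(-1,0))\,x^{\bf 0}=0$ and $[[x^{(1,0)},x^{(0,1)}],x^{(-1,0)}]=(1-q)[x^{(1,1)},x^{(-1,0)}]=(1-q)(q^{-1}-1)\,x^{(0,1)}$, and the other three $x$-relations follow by permuting the indices; the four $D$-relations come out identically because \eqref{eq:403} carries the same structure constants $\lambda$ as \eqref{eq:401}. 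I would also record here the structural identity $[D({\bf m}),x^{\bf n}]=[x^{\bf m},x^{\bf n}]$ coming from \eqref{eq:401}--\eqref{eq:402}; equivalently, putting $E({\bf m}):=D({\bf m})-x^{\bf m}$ one gets $[E({\bf m}),x^{\bf n}]=0$ and $[E({\bf m}),E({\bf n})]=\lambda({\bf m},{\bf n})E({\bf m}+{\bf n})$, so that $\tilde{L(q)}\cong V_{q}\times V_{q}$ as a direct product of Lie algebras, the two factors being the span of the $x^{\bf m}$ and the span of the $E({\bf m})$.

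For the presentation, let $\psi:\mathfrak{L}\to\tilde{L(q)}$ be the homomorphism matching the generators; it is surjective by the first claim. To prove $\psi$ injective I would argue degree by degree in the $\mathbb{Z}^{2}$-grading: every graded piece of $\tilde{L(q)}$ has dimension $2$ for ${\bf m}\neq{\bf 0}$ (one $x$, one $D$) and $0$ for ${\bf m}={\bf 0}$, so it suffices to check $\dim\mathfrak{L}_{\bf m}\le2$ for ${\bf m}\neq{\bf 0}$ and $\mathfrak{L}_{\bf 0}=0$. Let $\mathfrak{L}_{X},\mathfrak{L}_{D}$ be the subalgebras of $\mathfrak{L}$ generated by the $X$- and the $D$-generators. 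The four triple relations and two commuting relations among the $X$'s are precisely a presentation of $V_{q}$ — a fact to be cited, or proved by a $\mathbb{Z}^{2}$-graded induction in which each graded component is one-dimensional and the triple relations supply the scalar recursion already encountered in Case~1 of Theorem \ref{E} — so $\mathfrak{L}_{X}\cong V_{q}\cong\mathfrak{L}_{D}$ and $\psi$ restricts to isomorphisms of $\mathfrak{L}_{X}$ and $\mathfrak{L}_{D}$ onto the $x$-part and the $D$-part of $\tilde{L(q)}$. Everything then comes down to showing $\mathfrak{L}=\mathfrak{L}_{X}+\mathfrak{L}_{D}$, since $(\mathfrak{L}_{X})_{\bf m}$ and $(\mathfrak{L}_{D})_{\bf m}$ are at most one-dimensional in each degree and vanish in degree ${\bf 0}$.

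The main obstacle is exactly this last reduction: controlling the cross-brackets $[\mathfrak{L}_{X},\mathfrak{L}_{D}]$ inside $\mathfrak{L}$, and in particular showing that mixed degree-${\bf 0}$ brackets such as $[X_{(1,0)},D_{(-1,0)}]$ collapse. What must be extracted from the relations is the semidirect-product (equivalently, direct-product via the $E({\bf m})$) structure of $\tilde{L(q)}$: one wants the lifts of the elements $E({\bf a})=D({\bf a})-x^{\bf a}$ in $\mathfrak{L}$ to commute with $\mathfrak{L}_{X}$ and to generate a copy of $V_{q}$ complementary to it, after which $\mathfrak{L}=\mathfrak{L}_{X}\oplus\mathfrak{L}_{E}=\mathfrak{L}_{X}+\mathfrak{L}_{D}$ and the degreewise dimension count finishes the proof. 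An equivalent formulation, and a clean way to organize the bookkeeping, is a normal-form argument: fix a term order on iterated brackets of the eight generators and show that each rewrites, modulo the relations, to a combination of a single $x^{\bf m}$ and a single $D({\bf m})$ per nonzero degree and to $0$ in degree ${\bf 0}$; proving the confluence of this rewriting is the same difficulty in a different guise and is the step that needs real care.
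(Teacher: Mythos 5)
Your first two claims are correct, and they are in fact all that the paper itself relies on: Lemma \ref{lem4.1} is stated in the paper without proof, and in the theorem that follows it is used only to guarantee that $\tilde{L(q)}$ is finitely generated so that Lemma \ref{B} applies. Your generation argument (the span of the $x^{\bf m}$ and the span of the $D({\bf m})$ are each a copy of $V_{q}$, hence each generated by its four elements of the listed form), your direct verification of the ten relations from \eqref{eq:401}--\eqref{eq:403}, and your observation that $E({\bf m})=D({\bf m})-x^{\bf m}$ exhibits $\tilde{L(q)}$ as a direct sum of two ideals isomorphic to $V_{q}$ are all fine and cover exactly that content.

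The gap is in your third claim, and it is not just the deferred ``step that needs real care'': it cannot be completed, because every listed relation involves only the $x$-generators or only the $D$-generators, so nothing couples the two families. Concretely, let $M=V_{q}\oplus V_{q}$ (direct sum of ideals) and send $x^{(\pm1,0)},x^{(0,\pm1)}$ to the generators $L_{(\pm1,0)},L_{(0,\pm1)}$ of the first summand and $D(\pm1,0),D(0,\pm1)$ to those of the second. All ten relations hold in $M$ and the eight images generate $M$, so the abstractly presented algebra $\mathfrak{L}$ surjects onto $M$ compatibly with the generators. If your $\psi:\mathfrak{L}\to\tilde{L(q)}$ were an isomorphism, this would give a surjection $\tilde{L(q)}\to M$ killing $[x^{(1,0)},D(0,1)]=-(q-1)x^{(1,1)}$ (by \eqref{eq:402}), hence killing $x^{(1,1)}$, hence killing $[x^{(1,1)},x^{(-1,0)}]=(q^{-1}-1)x^{(0,1)}$ and so $x^{(0,1)}$ --- contradicting that $x^{(0,1)}$ must map onto the nonzero generator $L_{(0,1)}$ of the first summand. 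So $\ker\psi\neq0$: the mixed relation \eqref{eq:402} is genuinely extra information not implied by the listed relations, your identity $\mathfrak{L}=\mathfrak{L}_{X}+\mathfrak{L}_{D}$ fails in $\mathfrak{L}$, and no normal-form or confluence argument can rescue it. Read literally as a presentation the lemma is therefore not provable from these relations (one would have to add the cross-brackets, e.g. relations expressing $[D(\pm1,0),x^{(0,\pm1)}]$ etc.); what can be proved, and what the paper actually uses, is only that the eight elements generate $\tilde{L(q)}$ and satisfy the displayed relations, which your first paragraph already establishes.
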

\begin{theorem}
Let $\varphi$ be a $\frac{1}{2}$-derivation of $\tilde{L(q)}$. Then there exist $c, d\in \mathbb C$, such that
$$\varphi \left(x^{\textbf{m}}\right)= ( c+d)x^{\textbf{m}},$$
$$\varphi \left(D\left(\textbf{m}\right)\right)=cx^{\textbf{m}}+ dD\left(\textbf{m}\right).$$
\end{theorem}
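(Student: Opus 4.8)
The plan is to run the same machine as in Theorems~\ref{E} and~\ref{F}: first reduce to homogeneous $\frac{1}{2}$-derivations, then extract scalar recursions that turn out to be \emph{exactly} Eq.~\eqref{eq:5.1}, and reuse the case analysis from the proof of Theorem~\ref{E} rather than repeating it. Since $\tilde{L(q)}$ is finitely generated by Lemma~\ref{lem4.1}, Lemma~\ref{B} applies to the $\mathbb{Z}^{2}$-grading $\tilde{L(q)}=\underset{\textbf{m}\in\mathbb{Z}^{2}}{\oplus}(\tilde{L(q)})_{\textbf{m}}$ with $(\tilde{L(q)})_{\textbf{m}}=\mathbb{C}x^{\textbf{m}}\oplus\mathbb{C}D(\textbf{m})$, so we may assume $\varphi=\varphi_{\textbf{i}}$ has degree $\textbf{i}$ and write
\[\varphi(x^{\textbf{m}})=a_{\textbf{m}}x^{\textbf{m}+\textbf{i}}+b_{\textbf{m}}D(\textbf{m}+\textbf{i}),\qquad\varphi(D(\textbf{m}))=c_{\textbf{m}}x^{\textbf{m}+\textbf{i}}+d_{\textbf{m}}D(\textbf{m}+\textbf{i})\]
for all $\textbf{m}\in\mathbb{Z}^{2}\setminus\{\textbf{0}\}$. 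I would then feed each of the three defining brackets \eqref{eq:401}, \eqref{eq:402} and \eqref{eq:403} into the identity of Definition~\ref{A}, comparing the coefficients of $x^{\textbf{m}+\textbf{n}+\textbf{i}}$ and of $D(\textbf{m}+\textbf{n}+\textbf{i})$ separately.

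Applying $\varphi$ to $[x^{\textbf{m}},x^{\textbf{n}}]$, the entire right-hand side of the $\frac{1}{2}$-derivation identity lies in the $x$-part of $\tilde{L(q)}$, so its $D(\textbf{m}+\textbf{n}+\textbf{i})$-coefficient gives $2\lambda(\textbf{m},\textbf{n})b_{\textbf{m}+\textbf{n}}=0$; because $q$ is generic, every nonzero $\textbf{k}\in\mathbb{Z}^{2}$ can be written $\textbf{k}=\textbf{m}+\textbf{n}$ with $\textbf{m},\textbf{n}$ nonzero and non-parallel, i.e. $\lambda(\textbf{m},\textbf{n})\neq0$, so $b_{\textbf{m}}=0$ for all $\textbf{m}$. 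With $b\equiv0$ the $x^{\textbf{m}+\textbf{n}+\textbf{i}}$-coefficient of the same identity becomes literally Eq.~\eqref{eq:5.1} for $(a_{\textbf{m}})$, so the case-by-case argument in the proof of Theorem~\ref{E} applies verbatim and forces $a_{\textbf{m}}$ to be a constant when $\textbf{i}=\textbf{0}$ and $a_{\textbf{m}}=0$ when $\textbf{i}\neq\textbf{0}$. Repeating this with $[D(\textbf{m}),D(\textbf{n})]$ and separating the $x$- and $D$-components shows that $(c_{\textbf{m}})$ and $(d_{\textbf{m}})$ each satisfy Eq.~\eqref{eq:5.1} as well, hence are constant when $\textbf{i}=\textbf{0}$ and vanish otherwise. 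In particular $\varphi_{\textbf{i}}=0$ for $\textbf{i}\neq\textbf{0}$, so $\varphi=\varphi_{\textbf{0}}$ and there are constants $a,c,d\in\mathbb{C}$ with $\varphi(x^{\textbf{m}})=ax^{\textbf{m}}$ and $\varphi(D(\textbf{m}))=cx^{\textbf{m}}+dD(\textbf{m})$.

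It remains to pin down the relation $a=c+d$, which comes from the mixed bracket $[D(\textbf{m}),x^{\textbf{n}}]=\lambda(\textbf{m},\textbf{n})x^{\textbf{m}+\textbf{n}}$: comparing $x^{\textbf{m}+\textbf{n}+\textbf{i}}$-coefficients gives
\[2\lambda(\textbf{m},\textbf{n})a_{\textbf{m}+\textbf{n}}=(c_{\textbf{m}}+d_{\textbf{m}})\lambda(\textbf{m}+\textbf{i},\textbf{n})+a_{\textbf{n}}\lambda(\textbf{m},\textbf{n}+\textbf{i}),\]
and subtracting Eq.~\eqref{eq:5.1} for $(a_{\textbf{m}})$ leaves $(c_{\textbf{m}}+d_{\textbf{m}}-a_{\textbf{m}})\lambda(\textbf{m}+\textbf{i},\textbf{n})=0$ for every $\textbf{n}$; choosing $\textbf{n}$ not parallel to $\textbf{m}+\textbf{i}$ (possible whenever $\textbf{m}+\textbf{i}\neq\textbf{0}$) yields $c_{\textbf{m}}+d_{\textbf{m}}=a_{\textbf{m}}$, and for $\textbf{i}=\textbf{0}$ this holds for all $\textbf{m}\neq\textbf{0}$, i.e. $c+d=a$. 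Substituting $a=c+d$ gives precisely the stated formulas. The only delicate points are the elementary ``non-degeneracy'' observation that for generic $q$ a nonzero lattice vector always splits into a sum of two nonzero non-parallel vectors (this is what lets one discard the $D$-valued components $b_{\textbf{m}}$ and later solve for $c+d$), and the bookkeeping needed to recognise the recursions for $a$, $c$ and $d$ as the single equation Eq.~\eqref{eq:5.1}, so that the long computation of Theorem~\ref{E} is reused wholesale; the exceptional index $\textbf{m}=-\textbf{i}$ is harmless, since it is excluded from the index set when $\textbf{i}=\textbf{0}$ and all coefficients already vanish when $\textbf{i}\neq\textbf{0}$.
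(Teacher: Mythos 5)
Your proposal is correct and follows essentially the same route as the paper's proof: decompose into homogeneous $\frac{1}{2}$-derivations via Lemma \ref{B}, kill the $D$-components $b_{\bf m}$ using the $D$-part of the image of $[x^{\bf m},x^{\bf n}]$, observe that $a_{\bf m}$, $c_{\bf m}$, $d_{\bf m}$ then satisfy exactly the recursion \eqref{eq:5.1} so that the case analysis of Theorem \ref{E} applies verbatim, and finally extract $a=c+d$ from the mixed bracket. The only (immaterial) difference is that you obtain $a=c+d$ by subtracting \eqref{eq:5.1} from the mixed-bracket relation before specializing, whereas the paper substitutes the already-constant values directly into Eq.\eqref{eq:413}.
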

\begin{proof}Let $\varphi$ be a $\frac{1}{2}$-derivation of $\tilde{L(q)}$, then by Lemma \ref{lem4.1} and Lemma \ref{B}, we can write $\varphi =\underset{\textbf{i}\in\mathbb{Z}^{2} }{\Sigma} \varphi_{\textbf{i}}$, where $\varphi_{\textbf{i}}$ is also a $\frac{1}{2}$ derivation of $\tilde{L(q)}$. Let $\textbf{i}\in\mathbb{Z}^{2}$, for all $\textbf{m}\in\Gamma$, we write
$$\varphi_{\textbf{i}}\left(x^{\textbf{m}}\right)=a_{\textbf{m}}x^{\textbf{m}+\textbf{i}}+b_{\textbf{m}}D\left(\textbf{m}+\textbf{i}\right),$$
$$\varphi_{\textbf{i}}\left(D\left(\textbf{m}\right)\right)=c_{\textbf{m}}x^{\textbf{m}+\textbf{i}}+d_{\textbf{m}}D\left(\textbf{m}+\textbf{i}\right).$$
Apply $\varphi_{\textbf{i}}$ to Eq.\eqref{eq:401}-\eqref{eq:403}, we have $\forall~\textbf{m},\textbf{n}\in \Gamma$,
\begin{equation}2\lambda\left(\textbf{m},\textbf{n}\right)a_{\textbf{m}+\textbf{n}}=a_{\textbf{m}}\lambda\left(\textbf{m}+\textbf{i},\textbf{n}\right)+b_{\textbf{m}}\lambda\left(\textbf{m}+\textbf{i},\textbf{n}\right)+a_{\textbf{n}}\lambda\left(\textbf{m},\textbf{n}+\textbf{i}\right)
-b_{\textbf{n}}\lambda\left(\textbf{n}+\textbf{i},\textbf{m}\right),\label{eq:411}\end{equation}
\begin{equation}2\lambda\left(\textbf{m},\textbf{n}\right)b_{\textbf{m}+\textbf{n}}D\left(\textbf{m}+\textbf{n}+\textbf{i}\right)=0,\label{eq:412}\end{equation}
\begin{equation}2\lambda\left(\textbf{m},\textbf{n}\right)a_{\textbf{m+n}}=c_{\textbf{m}}\lambda\left(\textbf{m}+\textbf{i},\textbf{n}\right)
+d_{\textbf{m}}\lambda\left(\textbf{m}+\textbf{i},\textbf{n}\right)+a_{\textbf{n}}\lambda\left(\textbf{m},\textbf{n}+\textbf{i}\right),\label{eq:413}\end{equation}
\begin{equation}2\lambda\left(\textbf{m},\textbf{n}\right)b_{\textbf{m}+\textbf{n}}=\lambda\left(\textbf{m},\textbf{n}+\textbf{i}\right)b_{\textbf{n}},\label{eq:414}\end{equation}
\begin{equation}2\lambda\left(\textbf{m},\textbf{n}\right)c_{\textbf{m}+\textbf{n}}=
-\lambda\left(\textbf{n},\textbf{m}+\textbf{i}\right)c_{\textbf{m}}+\lambda\left(\textbf{m},\textbf{n}+\textbf{i}\right)c_{\textbf{n}},\label{eq:415}\end{equation}
\begin{equation}2\lambda\left(\textbf{m},\textbf{n}\right)d_{\textbf{m}+\textbf{n}}=
\lambda\left(\textbf{m}+\textbf{i},\textbf{n}\right)d_{\textbf{m}}+\lambda\left(\textbf{m},\textbf{n}+\textbf{i}\right)d_{\textbf{n}}.\label{eq:416}\end{equation}

Since $\forall~\textbf{m}\in \Gamma \setminus\left\{-\textbf{i}\right\}$, $\exists ~\textbf{r},\textbf{s}\in \Gamma _{2}$, s.t.
     $\textbf{m}=\textbf{r}+\textbf{s}$, then by Eq.\eqref{eq:412} we have

     \begin{equation}b_{\textbf{m}}=0,~\forall~\textbf{m}\in \Gamma \setminus\left\{-\textbf{i}\right\}.\label{eq:417}\end{equation}

To determine the other coefficients, we need to consider the following cases.

\textbf{Case 1.} $\textbf{i}=\textbf{0}$. 

By substituting $b_{\textbf{m}}=0,~\forall~\textbf{m}\in \Gamma $ into Eq.\eqref{eq:411}, we have
$$\lambda\left(\textbf{m},\textbf{n}\right)\left(2a_{\textbf{m}+\textbf{n}}-a_{\textbf{m}}-a_{\textbf{n}}\right)=0,~\forall~\textbf{m},\textbf{n}\in \Gamma .$$
Then by the proof of Case 1. of Theorem \ref{E}, we know $a_{\textbf{m}}$ is a constant for all $\textbf{m}\in \Gamma $. We denote this constant as $a$.

Similarly, by Eq.\eqref{eq:415} and Eq.\eqref{eq:416}, we have
$$\lambda\left(\textbf{m},\textbf{n}\right)\left(2c_{\textbf{m}+\textbf{n}}-c_{\textbf{m}}-c_{\textbf{n}}\right)=0,~\forall~\textbf{m},\textbf{n}\in \Gamma .$$
$$\lambda\left(\textbf{m},\textbf{n}\right)\left(2d_{\textbf{m}+\textbf{n}}-d_{\textbf{m}}-d_{\textbf{n}}\right)=0,~\forall~\textbf{m},\textbf{n}\in \Gamma .$$
Then  $c_{\textbf{m}}$ and $d_{\textbf{m}}$ are constants for all $\textbf{m}\in \Gamma $. We denote the two constants as $c$ and $d$, respectively.

Then by substituting $b_{\textbf{m}}=0$, $c_{\textbf{m}}=c$ and $d_{\textbf{m}}=d$ for all ${\bf m}\in \Gamma$ into Eq.\eqref{eq:413}, we have
$$\lambda\left(\textbf{m},\textbf{n}\right)\left(a-c-d\right)=0,~\forall~\textbf{m},\textbf{n}\in \Gamma .$$
We can find a pair of $({\bf m, n})\in \Gamma^2$ such that $\lambda({\bf m, n})\neq 0,$ thus,
$$a=c+d.$$

In summary, there exist $c, d\in \mathbb C,$ such that 
\begin{center}$\forall {\bf m}\in \Gamma, \varphi_{\bf 0}(x^{\bf m})=(c+d)x^{\bf m}$ and $\varphi_{\bf 0}(D({\bf m}))=cx^{\bf m}+dD({\bf m}).$\end{center}

\textbf{Case 2.} $\textbf{i}=(i_{1},i_{2})\in \left\{0\right\} \times \mathbb{Z}^{*}$ or $\mathbb{Z}^{*}\times \left\{0\right\} $. 

Without loss of generality, we suppose $\textbf{i} \in \mathbb{Z}^{*}\times\left\{0\right\}$. By substituting $b_{\textbf{m}}=0,~\forall~\textbf{m}\in \Gamma \setminus\left\{-\textbf{i}\right\}$ into Eq.\eqref{eq:411}, we have
\begin{equation*}2\left(q^{m_{2}n_{1}}-q^{m_{1}n_{2}}\right) a_{\textbf{m}+\textbf{n}}=a_{\textbf{m}}\left(q^{m_{2}n_{1}}-q^{(m_{1}+i_{1})n_{2}}\right)+
a_{\textbf{n}}\left(q^{m_{2}(n_{1}+i_{1})}-q^{m_{1}n_{2}}\right),~\forall~\textbf{m},\textbf{n}\in\Gamma.\end{equation*}
Then by the proof of Case 2. of Theorem \ref{E}, we know $a_{\textbf{m}}=0$ for all $\textbf{m}\in \Gamma $.

Similarly, by Eq.\eqref{eq:415} and Eq.\eqref{eq:416}, we have
$$2\left(q^{m_{2}n_{1}}-q^{m_{1}n_{2}}\right) c_{\textbf{m}+\textbf{n}}=c_{\textbf{m}}\left(q^{m_{2}n_{1}}-q^{(m_{1}+i_{1})n_{2}}\right)+
c_{\textbf{n}}\left(q^{m_{2}(n_{1}+i_{1})}-q^{m_{1}n_{2}}\right),~\forall~\textbf{m},\textbf{n}\in \Gamma .$$
$$2\left(q^{m_{2}n_{1}}-q^{m_{1}n_{2}}\right) d_{\textbf{m}+\textbf{n}}=d_{\textbf{m}}\left(q^{m_{2}n_{1}}-q^{(m_{1}+i_{1})n_{2}}\right)+
d_{\textbf{n}}\left(q^{m_{2}(n_{1}+i_{1})}-q^{m_{1}n_{2}}\right),~\forall~\textbf{m},\textbf{n}\in \Gamma .$$
Then  $c_{\textbf{m}}=0$ and $d_{\textbf{m}}=0$ for all $\textbf{m}\in \Gamma $.

In summary, for ${\bf i}\in \{0\}\times \mathbb Z^*$ or $\mathbb Z^*\times \{0\}, \varphi_{\bf i}=0.$\\

\textbf{Case 3.} $\textbf{i}=(i_{1},i_{2})\in\mathbb{Z}^{*}\times\mathbb{Z}^{*}$.
 
 By substituting $b_{\textbf{m}}=0,~\forall~\textbf{m}\in \Gamma \setminus\left\{-\textbf{i}\right\}$ into Eq.\eqref{eq:411}, we have
$$2\lambda\left(\textbf{m},\textbf{n}\right)a_{\textbf{m}+\textbf{n}}=a_{\textbf{m}}\lambda\left(\textbf{m}+\textbf{i},\textbf{n}\right)+
a_{\textbf{n}}\lambda\left(\textbf{m},\textbf{n}+\textbf{i}\right),~\forall~\textbf{m},\textbf{n}\in\Gamma.$$
Then by the proof of Case 3. of Theorem \ref{E}, we know $a_{\textbf{m}}=0$ for all $\textbf{m}\in \Gamma $. Similarly, by Eq.\eqref{eq:415} and Eq.\eqref{eq:416} and by the proof of Case 3. of Theorem \ref{E}, we know $c_{\textbf{m}}=0$ and $d_{\textbf{m}}=0$ for all $\textbf{m}\in \Gamma.$

In summary, for ${\bf i}\in \mathbb Z^*\times \mathbb Z^*, \varphi_{\bf i}=0.$\\

Hence combining the analysis of the three cases above, we obtain the desired result.
\end{proof}

\begin{theorem}
There are no non-trivial transposed Poisson algebra structures defined on the algebra $\tilde{L(q)}$.
\end{theorem}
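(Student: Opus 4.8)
The plan is to combine Lemma \ref{Q} with the description of $\frac{1}{2}$-derivations of $\tilde{L(q)}$ established in the previous theorem. Let $\left(\tilde{L(q)},\cdot,[\cdot,\cdot]\right)$ be a transposed Poisson structure. By Lemma \ref{Q} the multiplication $\cdot$ is commutative and, for every $z\in\tilde{L(q)}$, the left multiplication $\varphi_{z}:=z\cdot(-)$ is a $\frac{1}{2}$-derivation of $\tilde{L(q)}$. Hence, by the previous theorem, for each ${\bf m}\in\Gamma$ there exist scalars $a_{\bf m},b_{\bf m},c_{\bf m},d_{\bf m}\in\mathbb{C}$ so that, for all ${\bf n}\in\Gamma$,
\begin{align*}
x^{\bf m}\cdot x^{\bf n}&=(a_{\bf m}+b_{\bf m})\,x^{\bf n}, & x^{\bf m}\cdot D({\bf n})&=a_{\bf m}\,x^{\bf n}+b_{\bf m}\,D({\bf n}),\\
D({\bf m})\cdot x^{\bf n}&=(c_{\bf m}+d_{\bf m})\,x^{\bf n}, & D({\bf m})\cdot D({\bf n})&=c_{\bf m}\,x^{\bf n}+d_{\bf m}\,D({\bf n}).
\end{align*}
In particular every product of two basis vectors lies in a single homogeneous component of the $\mathbb{Z}^{2}$-grading of $\tilde{L(q)}$, in which $(\tilde{L(q)})_{\bf m}=\mathbb{C}x^{\bf m}+\mathbb{C}D({\bf m})$.

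First I would exploit commutativity together with this grading. Fix ${\bf m}\neq{\bf n}$ in $\Gamma$ ($\Gamma$ being infinite). Comparing $x^{\bf m}\cdot D({\bf n})$ with $D({\bf n})\cdot x^{\bf m}$ yields $a_{\bf m}x^{\bf n}+b_{\bf m}D({\bf n})=(c_{\bf n}+d_{\bf n})x^{\bf m}$; since $x^{\bf n},D({\bf n})$ span $(\tilde{L(q)})_{\bf n}$ while $x^{\bf m}$ lies in the distinct summand $(\tilde{L(q)})_{\bf m}$, these three vectors are linearly independent, so $a_{\bf m}=b_{\bf m}=0$ and $c_{\bf n}=-d_{\bf n}$. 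Letting ${\bf m}$ and ${\bf n}$ range over $\Gamma$ gives $a_{\bf m}=b_{\bf m}=0$ and $c_{\bf m}=-d_{\bf m}$ for every ${\bf m}\in\Gamma$. Consequently $x^{\bf m}\cdot x^{\bf n}=x^{\bf m}\cdot D({\bf n})=D({\bf m})\cdot x^{\bf n}=0$ for all ${\bf m},{\bf n}\in\Gamma$, while $D({\bf m})\cdot D({\bf n})=d_{\bf m}\left(D({\bf n})-x^{\bf n}\right)$.

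Finally I would apply commutativity once more to these remaining products: for ${\bf m}\neq{\bf n}$ one gets $d_{\bf m}\left(D({\bf n})-x^{\bf n}\right)=d_{\bf n}\left(D({\bf m})-x^{\bf m}\right)$, and the four vectors $x^{\bf m},D({\bf m}),x^{\bf n},D({\bf n})$ are linearly independent, forcing $d_{\bf m}=d_{\bf n}=0$. Hence $d_{\bf m}=0$ for all ${\bf m}\in\Gamma$ and $D({\bf m})\cdot D({\bf n})=0$ as well, so $\cdot$ is the zero multiplication and the transposed Poisson structure on $\tilde{L(q)}$ is trivial, which is the assertion. I do not expect a real obstacle here: unlike in Corollary \ref{coro1}, Lemma \ref{C} cannot be invoked directly because $\tilde{L(q)}$ does admit non-trivial $\frac{1}{2}$-derivations (those with $c\neq0$ above); nevertheless the commutativity comparisons rule out any non-trivial structure, and the only care required is keeping track of which homogeneous component each product lands in, together with the generic-case conventions $\Gamma_{1}=\emptyset$ and $\Gamma=\Gamma_{2}$.
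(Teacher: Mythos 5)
Your proposal is correct and follows essentially the same route as the paper: invoke Lemma \ref{Q}, describe each left multiplication via the classification of $\frac{1}{2}$-derivations of $\tilde{L(q)}$, and then use commutativity together with the linear independence of $x^{\bf m}, D({\bf m}), x^{\bf n}, D({\bf n})$ for ${\bf m}\neq{\bf n}$ to force all structure constants to vanish. Your write-up is in fact slightly more explicit than the paper's (you track the vanishing of the $d$-coefficients from the $D({\bf m})\cdot D({\bf n})$ comparison, a step the paper compresses into one sentence), but the underlying argument is the same.
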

\begin{proof}
Let $\left(\tilde{L(q)},\cdot,\left[\cdot ,\cdot \right]\right)$ be a transposed Poisson algebra, then $\left( \tilde{L(q)},\cdot\right)$ is a commutative and associative algebra which satisfies Eq.\eqref{eq: 2.1}. For all ${\bf m}\in \Gamma $, we denote the left multiplication by $x^{\textbf{m}}$ or $D({\textbf m})$ in $\left( \tilde{L(q)},\cdot\right)$ as $\varphi _{x^{\textbf{m}}}$ or $\varphi _{D({\textbf{m}})}$.

Now we  consider the commutativity :

For all $\textbf{m},\textbf{n}\in\Gamma$,
$$x^{\textbf{n}}\cdot x^{\textbf{m}}=\varphi _{x^{\textbf{n}}}(x^{\textbf{m}})=(c_{x^{\textbf{n}}}+ d_{x^{\textbf{n}}})x^{\textbf{m}}, $$
$$x^{\textbf{m}}\cdot x^{\textbf{n}}=\varphi _{x^{\textbf{m}}}(x^{\textbf{n}})=(c_{x^{\textbf{m}}}+ d_{x^{\textbf{m}}})x^{\textbf{n}}.$$

$$D\left(\textbf{n}\right)\cdot D\left(\textbf{m}\right)=\varphi _{D\left(\textbf{n}\right)}(D\left(\textbf{m}\right))=
c_{D({\textbf{n}})}x^{\textbf{m}}+ d_{D({\textbf{n}})}D\left(\textbf{m}\right), $$
$$D\left(\textbf{m}\right)\cdot D\left(\textbf{n}\right)=\varphi _{D\left(\textbf{m}\right)}(D\left(\textbf{n}\right))=
c_{D({\textbf{m}})}x^{\textbf{n}}+ d_{D({\textbf{m}})}D\left(\textbf{n}\right).$$

$$ x^{\textbf{m}}\cdot D\left(\textbf{n}\right)=\varphi _{ x^{\textbf{m}}}(D\left(\textbf{n}\right))=c_{x^{\textbf{m}}}x^{\textbf{n}}+ d_{x^{\textbf{m}}}D\left(\textbf{n}\right).$$
$$ D\left(\textbf{n}\right)\cdot  x^{\textbf{m}} =\varphi _{D\left(\textbf{n}\right) }(x^{\textbf{m}})=(c_{D({\textbf{n}})}+ d_{D({\textbf{n}})})x^{\textbf{m}}.$$

Choosing $\textbf{m}\not=\textbf{n}$ in the above equations, we can deduce
$c_{x^{\textbf{m}}}=c_{ x^{\textbf{n}}}=c_{D\left(\textbf{m}\right)}=c_{D\left(\textbf{n}\right)}=0$. This leads to $x\cdot y =\varphi _{x}(y)=0$ for all $x,y\in \tilde{L(q)}$, implying that the transposed Poisson structure is trivial.
\end{proof}

\subsection{The case where $q$ is a root of unity for the $q$-Quantum Torus Lie algebra}\label{subsec4.2}
Observe that $L(q)= \underset{\textbf{m}\in \mathbb{Z}^{2} }{\oplus} \left ( L(q) \right ) _{\textbf{m}}$ is a $\mathbb{Z}^{2}$-grading, where
$\left ( L(q) \right ) _{\bf{m}}=
\mathbb{C} x^{\textbf{m}}+\mathbb CD({\bf m}),$  for $\textbf{m}\in\mathbb Z^2.$ In addition, $L(q)$ is finitely generated. 

\begin{lemma}\label{Z} \cite{bib30}
The $q$-quantum torus Lie algebra $L(q)$ is finitely generated $\mathbb{Z}^{2}$-grading Lie
algebra.
\end{lemma}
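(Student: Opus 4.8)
The plan is to exhibit an explicit finite set $S$ of homogeneous elements and to check that the subalgebra $\langle S\rangle$ contains $x^{\mathbf{m}}$ and $D(\mathbf{m})$ for every $\mathbf{m}\in\Gamma$; since $L(q)$ is $\mathbb{Z}^2$-graded, $\langle S\rangle$ is automatically a $\mathbb{Z}^2$-graded subalgebra, so only finite generation needs proof. One structural feature of the brackets \eqref{eq:11.0}--\eqref{eq:11.2} dictates part of the choice of $S$: the two ``$\Gamma_1$-sectors'' $\mathfrak{g}:=\bigoplus_{\mathbf{i}\in\Gamma_1}\mathbb{C}\,D(\mathbf{i})$ and $\bigoplus_{\mathbf{i}\in\Gamma_1}\mathbb{C}\,x^{\mathbf{i}}$ are isolated, in the sense that any bracket whose value lies in one of them either vanishes or already involves a factor indexed by $\Gamma_1$ --- because $\lambda(\mathbf{m},\mathbf{n})=0$ whenever $\mathbf{m},\mathbf{n}\in\Gamma_2$ with $\mathbf{m}+\mathbf{n}\in\Gamma_1$ (noted in Section~\ref{subsec3.2}), while $\mathbf{m}\in\Gamma_1$, $\mathbf{n}\in\Gamma_2$ forces $\mathbf{m}+\mathbf{n}\in\Gamma_2$. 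Consequently $S$ has to contain some $\Gamma_1$-indexed $D$'s and at least one $\Gamma_1$-indexed $x$, and I would take
$$S=\bigl\{\,x^{\pm\mathbf{e}_1},\ x^{\pm\mathbf{e}_2},\ D(\pm\mathbf{e}_1),\ D(\pm\mathbf{e}_2),\ D(\pm t\mathbf{e}_1),\ D(\pm t\mathbf{e}_2),\ x^{t\mathbf{e}_1}\,\bigr\}.$$

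The verification would then split into four steps. \textbf{(i)} Running the inductive scheme behind Lemma~4.1 of \cite{bib28} --- which builds every element of $V_q$ out of $L_{\pm\mathbf{e}_1},L_{\pm\mathbf{e}_2}$ via the brackets $[L_{\pm\mathbf{e}_j},L_{\mathbf{n}}]$ --- now for the $x$-elements and the brackets $[D(\pm\mathbf{e}_j),x^{\mathbf{n}}]=\lambda(\pm\mathbf{e}_j,\mathbf{n})\,x^{\pm\mathbf{e}_j+\mathbf{n}}$, one obtains $x^{\mathbf{m}}\in\langle S\rangle$ for all $\mathbf{m}\in\Gamma_2$; the only adjustment to the generic argument is that at each stage one uses whichever coordinate-shift has a nonzero coefficient --- a shift in the first coordinate costs $1-q^{m_2}$ (nonzero iff $t\nmid m_2$), one in the second costs $q^{m_1}-1$ (nonzero iff $t\nmid m_1$), and for $\mathbf{m}\in\Gamma_2$ at least one of these holds, the ``$m_1=0$'' and ``$m_2=0$'' boundary manoeuvres of \cite{bib28} going through for the same reason. \textbf{(ii)} The same scheme applied to the brackets $[D(\pm\mathbf{e}_j),D(\mathbf{n})]$ (where $g=\lambda$ whenever $\mathbf{n}\in\Gamma_2$) yields $D(\mathbf{m})\in\langle S\rangle$ for all $\mathbf{m}\in\Gamma_2$. \textbf{(iii)} On $\mathfrak{g}$ we have $[D(t\mathbf{k}),D(t\mathbf{l})]=t^2(k_2l_1-k_1l_2)\,D(t(\mathbf{k}+\mathbf{l}))$, so after the reindexing $D(t\mathbf{k})\leftrightarrow W_{\mathbf{k}}$ and a rescaling of the basis, $\mathfrak{g}$ is the classical Virasoro-like algebra; since the latter is generated by $W_{\pm\mathbf{e}_1},W_{\pm\mathbf{e}_2}$ (the standard determinant-routing induction, pairing only non-proportional exponents), we get $\mathfrak{g}=\langle D(\pm t\mathbf{e}_1),D(\pm t\mathbf{e}_2)\rangle$. \textbf{(iv)} With every $D(\mathbf{i})$, $\mathbf{i}\in\Gamma_1$, now in hand, the identity $[D(t\mathbf{l}),x^{t\mathbf{k}}]=t^2(l_2k_1-l_1k_2)\,x^{t(\mathbf{k}+\mathbf{l})}$ (here $h=\det$ since $t\mathbf{l}\in\Gamma_1$) produces every $x^{\mathbf{i}}$, $\mathbf{i}\in\Gamma_1$, starting from $x^{t\mathbf{e}_1}$, by the same kind of routing. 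Steps (i)--(iv) together give $\langle S\rangle=L(q)$.

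The step I expect to be the main obstacle is (i)--(ii): one has to make sure that the $\Gamma_2$-parts of the $x$- and $D$-sectors are still generated by the degree-$\pm\mathbf{e}_j$ elements in the root-of-unity case, even though many structure constants that were nonzero for generic $q$ now vanish. The key is the coordinate-shift dichotomy above: because every $\mathbf{m}\in\Gamma_2$ has a coordinate not divisible by $t$, the induction never gets stuck, and it stays entirely inside the $\Gamma_2$-sector (no exponent in $(t\mathbb{Z})^2\cup\{\mathbf{0}\}$ is ever needed as an intermediate), so no bracket with a forced-zero coefficient is ever invoked; a short finite check, under the standing hypothesis $t>2$, makes this precise. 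Steps (iii) and (iv) are then essentially the finite generation of the classical Virasoro-like algebra by $\{W_{\pm\mathbf{e}_1},W_{\pm\mathbf{e}_2}\}$, transported across the obvious reindexing, and present no real difficulty.
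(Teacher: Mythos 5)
Your argument is correct, but it cannot be "the same as the paper's": the paper offers no proof of this lemma at all — it is quoted from \cite{bib30} (just as Lemma \ref{lem4.1} for $\tilde{L(q)}$ is stated without proof) — whereas you supply a self-contained, constructive argument. Your route parallels the way the paper treats the generic case (generation by degree-$\pm\textbf{e}_{1},\pm\textbf{e}_{2}$ elements, as in Lemma 4.1 of \cite{bib28}), and the two points that genuinely need care at a root of unity are correctly identified and handled: first, the coordinate-shift coefficients $1-q^{m_{2}}$ and $q^{m_{1}}-1$ vanish exactly when $t\mid m_{2}$, resp. $t\mid m_{1}$, and since every $\textbf{m}\in\Gamma_{2}$ has a coordinate prime to $t$, your routing (e.g. $x^{\textbf{e}_{1}}\to x^{(1,m_{2})}\to x^{(m_{1},m_{2})}$ when $t\nmid m_{2}$, and $x^{\textbf{e}_{2}}\to x^{(m_{1},1)}\to x^{(m_{1},m_{2})}$ when $t\nmid m_{1}$) never leaves $\Gamma_{2}$ and never meets a zero structure constant, and the same routing works verbatim for the $D$-sector since $g=\lambda$ on $\Gamma_{2}\times\Gamma_{2}$; second, because $\lambda(\textbf{m},\textbf{n})=0$ whenever $\textbf{m},\textbf{n}\in\Gamma_{2}$ with $\textbf{m}+\textbf{n}\in\Gamma_{1}$, the two $\Gamma_{1}$-sectors really do need seed generators, and your steps (iii)--(iv) correctly reduce them to the classical Virasoro-like algebra via $[D(t\textbf{k}),D(t\textbf{l})]=t^{2}(k_{2}l_{1}-k_{1}l_{2})D(t(\textbf{k}+\textbf{l}))$ and to its natural module generated by $x^{t\textbf{e}_{1}}$ via $[D(t\textbf{l}),x^{t\textbf{k}}]=t^{2}(l_{2}k_{1}-l_{1}k_{2})x^{t(\textbf{k}+\textbf{l})}$; I checked the determinant-routing there as well (reach $W_{(1,k)}$ for all $k$, then $W_{(j,k)}$ for $k\neq 0$, then $W_{(j,0)}$ from $W_{(j,-1)}$), and it goes through. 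What your approach buys is an explicit finite generating set and a proof that can be read off the relations \eqref{eq:11.0}--\eqref{eq:11.2} directly, rather than an appeal to \cite{bib30}; the only cost is that steps (i)--(iii) are presented as a scheme rather than written out, so to make this publishable you should display one of the two routings explicitly (the other three inductions are literally the same computation).
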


\begin{theorem} \label{H}
Let $\varphi$ be a $\frac{1}{2}$-derivation of $L(q)$. Then there exist  $a, c\in \mathbb C$, such that  
\begin{eqnarray*}\forall {\bf m}\in \Gamma, &&\varphi \left(x^{\bf m}\right)= a x^{\bf m},\\
&&\varphi \left(D\left({\bf m}\right)\right)=\left\{\begin{matrix}
  cx^{\bf m}+ aD\left({\bf m}\right), & \mbox{if}~ {\bf m}\in\Gamma_{1},\\
  aD\left({\bf m}\right),&\mbox{if}~ {\bf m}\in\Gamma_{2}.
\end{matrix}\right.
\end{eqnarray*}
\end{theorem}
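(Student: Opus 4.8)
The plan is to run the same machinery as in Theorems \ref{E} and \ref{F}, but carrying along the extra component that the derivation part ${\rm Der}_{skew}(\mathbb{C}_q)$ contributes. By Lemma \ref{Z} and Lemma \ref{B} it suffices to treat a homogeneous $\varphi$ of some degree ${\bf i}\in\mathbb{Z}^2$; writing
$$\varphi(x^{\bf m})=a_{\bf m}x^{{\bf m}+{\bf i}}+b_{\bf m}D({\bf m}+{\bf i}),\qquad \varphi(D({\bf m}))=c_{\bf m}x^{{\bf m}+{\bf i}}+d_{\bf m}D({\bf m}+{\bf i}),$$
and applying the $\tfrac12$-derivation identity to the three families \eqref{eq:11.0}, \eqref{eq:11.1}, \eqref{eq:11.2}, then separating the $x^{{\bf m}+{\bf n}+{\bf i}}$- and $D({\bf m}+{\bf n}+{\bf i})$-coefficients, one obtains a system of six scalar functional equations in $a,b,c,d$ whose coefficients are built from $\lambda$, $h$ and $g$ (the analogues of \eqref{eq:411}--\eqref{eq:416}).

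Next I would dispose of $b$. The $D$-part of the identity coming from \eqref{eq:11.0} reads $\lambda({\bf m},{\bf n})b_{{\bf m}+{\bf n}}=0$, and since every ${\bf k}\in\Gamma_2$ is a sum ${\bf k}={\bf r}+({\bf k}-{\bf r})$ with ${\bf r}\in\{{\bf e}_1,{\bf e}_2\}$ chosen so that $\lambda({\bf r},{\bf k}-{\bf r})\neq0$, we get $b_{\bf k}=0$ for ${\bf k}\in\Gamma_2$; feeding $b|_{\Gamma_2}=0$ into the $x$-part of the identity from \eqref{eq:11.0} (resp. \eqref{eq:11.1}) with one argument in $\Gamma_1$, where the structure constant degenerates to the determinant form $\det\binom{\cdot}{\cdot}$ and is generically nonzero, forces $b_{\bf k}=0$ on $\Gamma_1$ too (wherever $D({\bf k}+{\bf i})\neq0$). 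With $b\equiv0$, the $x$-part of \eqref{eq:11.0} (for $a$), the $x$-part of \eqref{eq:11.2} (for $c$) and the $D$-part of \eqref{eq:11.2} (for $d$), restricted to arguments in $\Gamma_2$, become literally Eq.\eqref{eq:4.2}; hence by the computations in Theorems \ref{E} and \ref{F}, $a,c,d$ all vanish identically when ${\bf i}\notin\Gamma_1\cup\{{\bf 0}\}$ and are each constant on $\Gamma_2$ when ${\bf i}\in\Gamma_1\cup\{{\bf 0}\}$. The $x$-part of the identity from \eqref{eq:11.1} restricted to $\Gamma_2$ gives the relation $a_{\bf m}=c_{\bf m}+d_{\bf m}$ there, while \eqref{eq:11.2} with one argument in $\Gamma_1$ (so $g=\det\binom{\cdot}{\cdot}$) yields $c_{{\bf m}+{\bf n}}=\tfrac12 c_{\bf m}$ and $d_{{\bf m}+{\bf n}}=\tfrac12(d_{\bf m}+d_{\bf n})$; for ${\bf i}={\bf 0}$ these force $c\equiv0$ on $\Gamma_2$ and $d$ to equal a single constant $a$ on all of $\Gamma$, with $a_{\bf m}=a$ on $\Gamma_2$.

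The remaining, and hardest, step is the behaviour on $\Gamma_1$ — this is exactly where $L(q)$ is more rigid than $\tilde V_q$, since the $D({\bf m})$ with ${\bf m}\in\Gamma_1$ are not central (whereas the $L_{\bf m}$ with ${\bf m}\in\Gamma_1$ in $\tilde V_q$ are, which is why Theorem \ref{F} leaves free parameters there). Here I would use \eqref{eq:11.1} and \eqref{eq:11.2} with both arguments in $\Gamma_1$: the structure constants become values of the skew bilinear form $\det\binom{{\bf n}}{{\bf m}}=m_2n_1-m_1n_2$, and expanding $\det\binom{{\bf n}}{{\bf m}+{\bf i}}=\det\binom{{\bf n}}{{\bf m}}+\det\binom{{\bf n}}{{\bf i}}$ and the like, then comparing the parts that are linear in ${\bf n}$, forces the coefficient combinations multiplying $\det\binom{{\bf n}}{\cdot}$ to vanish. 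Choosing ${\bf m},{\bf n}\in\Gamma_1$ with $\det\binom{{\bf n}}{{\bf m}}\neq0$ (possible throughout, $\Gamma_1$ being a full-rank lattice), this kills $a_{\bf m},b_{\bf m},c_{\bf m},d_{\bf m}$ entirely for ${\bf i}\in\Gamma_1\setminus\{{\bf 0}\}$, so $\varphi_{\bf i}=0$ there; for ${\bf i}={\bf 0}$ it gives instead the recursions $a_{{\bf m}+{\bf n}}=\tfrac12(a+a_{\bf n})$, $d_{{\bf m}+{\bf n}}=\tfrac12(a+d_{\bf n})$ and $2c_{{\bf m}+{\bf n}}=c_{\bf m}+c_{\bf n}$, which together with the connectivity of $\Gamma_1$ under such pairings and the "geometric sequence" argument of Theorem \ref{E} give $a_{\bf m}=d_{\bf m}=a$ and $c_{\bf m}=c$ for a single constant $c$ on $\Gamma_1$. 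Assembling the only surviving component $\varphi=\varphi_{\bf 0}$ then produces exactly the asserted formulas. I expect this $\Gamma_1$-analysis — in particular ruling out all degrees ${\bf i}\in\Gamma_1\setminus\{{\bf 0}\}$ and pinning down $c$ on $\Gamma_1$ — to be the main technical obstacle, as it is the part that genuinely uses the $\det$-type brackets of $L(q)$ and cannot be quoted from Theorem \ref{F}.
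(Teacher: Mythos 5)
Your overall plan coincides with the paper's proof: decompose $\varphi$ by $\mathbb Z^2$-degree via Lemmas \ref{Z} and \ref{B}, write $\varphi_{\bf i}(x^{\bf m})=a_{\bf m}x^{{\bf m}+{\bf i}}+b_{\bf m}D({\bf m}+{\bf i})$, $\varphi_{\bf i}(D({\bf m}))=c_{\bf m}x^{{\bf m}+{\bf i}}+d_{\bf m}D({\bf m}+{\bf i})$, kill $b$ first, recycle the $\lambda$-type computations of Theorems \ref{E} and \ref{F} where the structure constants are exponential, and then do a separate $\det$-type analysis; the surviving component is $\varphi_{\bf 0}$ with the free constant $c$ on $\Gamma_1$, exactly as in the paper. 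However, the step you yourself flag as the main obstacle is where your sketch has a genuine gap. Your proposed mechanism for the $\Gamma_1$-analysis --- expand $\det\binom{\bf n}{{\bf m}+{\bf i}}=\det\binom{\bf n}{\bf m}+\det\binom{\bf n}{\bf i}$ and ``compare the parts linear in ${\bf n}$'' --- is not a valid operation: the equations are, for fixed ${\bf m}$, identities in ${\bf n}$ in which the unknowns $a_{{\bf m}+{\bf n}}$, $c_{{\bf m}+{\bf n}}$, $d_{{\bf m}+{\bf n}}$ also depend on ${\bf n}$ in an unknown way, so you cannot separate the identity into pieces according to the ${\bf n}$-dependence of the structure constants. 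The paper instead has to solve explicit recursions obtained from special substitutions (e.g.\ Eqs.~\eqref{eq:9.20}--\eqref{eq:9.36} for $c$ on $\Gamma_1$, Eqs.~\eqref{eq:101}--\eqref{eq:4.44} for $a$ on $\Gamma_1$, and the choice ${\bf m}=-{\bf i}$ in Eq.~\eqref{eq:5.012}, which uses $D({\bf 0})=0$, to kill $d$); none of this is recovered by a coefficient comparison in ${\bf n}$.

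A second concrete gap: for degrees ${\bf i}\in\Gamma_2$ you claim that $a,c,d$ ``vanish identically'' by quoting Theorems \ref{E}/\ref{F}, and your $\Gamma_1$-paragraph then only treats ${\bf i}\in\Gamma_1\setminus\{{\bf 0}\}$ and ${\bf i}={\bf 0}$. The quotation is legitimate for $a$ (Eq.~\eqref{eq:11.3} is $\lambda$-type for all ${\bf m},{\bf n}\in\Gamma$ once $b=0$), but for $c$ and $d$ the equations \eqref{eq:11.7}--\eqref{eq:11.8} are $\lambda$-type only when both arguments lie in $\Gamma_2$; as soon as one argument is in $\Gamma_1$ the structure constants degenerate to $\det\binom{\cdot}{\cdot}$, so Theorem \ref{F} gives you $c=d=0$ only on $\Gamma_2$, and the values $c_{\bf m}, d_{\bf m}$ for ${\bf m}\in\Gamma_1$ when ${\bf i}\in\Gamma_2$ still require a separate $\det$-form recursion (this is the paper's Case 3, Eq.~\eqref{eq:9.2011} and the analogous argument for $d$). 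So the architecture of your proposal is right and matches the paper, but the two places where $L(q)$ genuinely differs from $\tilde V_q$ --- eliminating the degrees ${\bf i}\neq{\bf 0}$ on the $\Gamma_1$-part of the coefficients --- are not actually proved, and the shortcut you offer for them does not work as stated.
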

\begin{proof}Let $\varphi$ be a $\frac{1}{2}$-derivation of $L(q)$, then by Lemma \ref{Z} and Lemma \ref{B}, we can write $\varphi =\underset{\textbf{i}\in\mathbb{Z}^{2} }{\Sigma} \varphi_{\textbf{i}}$, where $\varphi_{\textbf{i}}$ is also a $\frac{1}{2}$ derivation of $L(q)$. Let $\textbf{i}\in\mathbb{Z}^{2}$, for all $\textbf{m}\in\Gamma$, we write
$$\varphi_{\textbf{i}}\left(x^{\textbf{m}}\right)=a_{\textbf{m}}x^{\textbf{m}+\textbf{i}}+b_{\textbf{m}}D\left(\textbf{m}+\textbf{i}\right),$$
$$\varphi_{\textbf{i}}\left(D\left(\textbf{m}\right)\right)=c_{\textbf{m}}x^{\textbf{m}+\textbf{i}}+d_{\textbf{m}}D\left(\textbf{m}+\textbf{i}\right).$$
Applying $\varphi_{\textbf{i}}$ to Eq.\eqref{eq:11.0}-\eqref{eq:11.2}, we have $\forall~\textbf{m},\textbf{n}\in \Gamma$,
\begin{equation}2\lambda\left(\textbf{m},\textbf{n}\right)a_{\textbf{m}+\textbf{n}}=a_{\textbf{m}}\lambda\left(\textbf{m}+\textbf{i},\textbf{n}\right)+b_{\textbf{m}}h\left(\textbf{m}+\textbf{i},\textbf{n}\right)+a_{\textbf{n}}\lambda\left(\textbf{m},\textbf{n}+\textbf{i}\right)
-b_{\textbf{n}}h\left(\textbf{n}+\textbf{i},\textbf{m}\right),\label{eq:11.3}\end{equation}
\begin{equation}2\lambda\left(\textbf{m},\textbf{n}\right)b_{\textbf{m}+\textbf{n}}D\left(\textbf{m}+\textbf{n}+\textbf{i}\right)=0,\label{eq:11.4}\end{equation}
\begin{equation}2h\left(\textbf{m},\textbf{n}\right)a_{\textbf{m+n}}=c_{\textbf{m}}\lambda\left(\textbf{m}+\textbf{i},\textbf{n}\right)
+d_{\textbf{m}}h\left(\textbf{m}+\textbf{i},\textbf{n}\right)+a_{\textbf{n}}h\left(\textbf{m},\textbf{n}+\textbf{i}\right),\label{eq:11.5}\end{equation}
\begin{equation}2h\left(\textbf{m},\textbf{n}\right)b_{\textbf{m}+\textbf{n}}=g\left(\textbf{m},\textbf{n}+\textbf{i}\right)b_{\textbf{n}},\label{eq:11.6}\end{equation}
\begin{equation}2g\left(\textbf{m},\textbf{n}\right)c_{\textbf{m}+\textbf{n}}=
-h\left(\textbf{n},\textbf{m}+\textbf{i}\right)c_{\textbf{m}}+h\left(\textbf{m},\textbf{n}+\textbf{i}\right)c_{\textbf{n}},\label{eq:11.7}\end{equation}
\begin{equation}\label{eq:11.8}
2g\left(\textbf{m},\textbf{n}\right)d_{\textbf{m}+\textbf{n}}=
g\left(\textbf{m}+\textbf{i},\textbf{n}\right)d_{\textbf{m}}+g\left(\textbf{m},\textbf{n}+\textbf{i}\right)d_{\textbf{n}}.\end{equation}

Since $\forall~\textbf{m}\in \Gamma _{2}\setminus\left\{-\textbf{i}\right\}$, $\exists ~\textbf{r},\textbf{s}\in \Gamma _{2}$, s.t.
     $\textbf{m}=\textbf{r}+\textbf{s}$, then by Eq.\eqref{eq:11.4} we have

     \begin{equation}\label{eq:650}
     b_{\textbf{m}}=0,~\forall~\textbf{m}\in \Gamma _{2}\setminus\left\{-\textbf{i}\right\}.\end{equation}

To determine the other coefficients, we need to consider the following cases.

\textbf{Case 1.} $\textbf{i}=\textbf{0}$. 

By Eq.\eqref{eq:11.3} and Eq.\eqref{eq:650}, we have
$$\mathrm{det}\binom{\textbf{n}}{\textbf{m}}b_{\textbf{m}}=0, ~\forall~(\textbf{m}, \textbf{n})\in \Gamma _{1}\times \Gamma _{2}.$$
Since for all $\textbf{ m}\in \Gamma _{1}$, there exists $\textbf{n}\in \Gamma _{2}$, such that $\mathrm{det}\binom{\textbf{n}}{\textbf{m}}\not=0$, then we have
$$b_{\textbf{m}}=0,~\forall~\textbf{m}\in \Gamma _{1}.$$
Combining with Eq.\eqref{eq:650}, we get
$$b_{\textbf{m}}=0,~\forall~\textbf{m}\in \Gamma.$$

According to Eq.\eqref{eq:11.7}, we have
$$\mathrm{det}\binom{\textbf{n}}{\textbf{m}}\left(2c_{\textbf{m}+\textbf{n}}-c_{\textbf{m}}-c_{\textbf{n}}\right)=0,~\forall~\textbf{m},\textbf{n}\in \Gamma _{1}.$$
By a similar argument as in Case 1 of Theorem \ref{E} (just replace $\textbf{m}=(m_{1},m_{2}), \textbf{n}=(n_{1},n_{2}) \in \mathbb{Z}^{2}\setminus\left\{{\bf 0}\right\}$ with $\textbf{m}=(m_{1},m_{2})t, \textbf{n}=(n_{1},n_{2})t \in \Gamma _{1}$ respectively), it can be proven that $c_{\textbf{m}}$ is a constant for all $\textbf{m}\in \Gamma _{1}$. Denote this constant as $c$. Thus we proved that $$ c_{\textbf m}=c, \forall \textbf m\in\Gamma_1.$$

According to Eq.\eqref{eq:11.7}, we have
$$\lambda\left(\textbf{m},\textbf{n}\right)\left(2c_{\textbf{m}+\textbf{n}}-c_{\textbf{m}}-c_{\textbf{n}}\right)=0,~\forall~\textbf{m},\textbf{n}\in \Gamma _{2}.$$
Then by the proof of case 1. of Theorem \ref{F}, we know $c_{\textbf{m}}$ is a constant for all $\textbf{m}\in \Gamma _{2}$. Denote this constant as $c'$. 
According to Eq.\eqref{eq:11.7}, we have
$$2\mathrm{det}\binom{\textbf{n}}{\textbf{m}}c_{\textbf{m}+\textbf{n}}=\mathrm{det}\binom{\textbf{n}}{\textbf{m}}c_{\textbf{n}},~\forall~(\textbf{m}, \textbf{n})\in \Gamma _{1}\times\Gamma _{2}.$$
Since for all $\textbf{m}\in \Gamma _{2}$, $c_{\textbf{m}}=c'$, then we have
$$\mathrm{det}\binom{\textbf{n}}{\textbf{m}}c'=0,~\forall~(\textbf{m}, \textbf{n})\in \Gamma _{1}\times \Gamma _{2}.$$
We can find a pair of $\left ( \textbf{m},\textbf{n} \right ) \in \Gamma _{1}\times  \Gamma _{2}$ such that $\mathrm{det}\binom{\textbf{n}}{\textbf{m}}\not=0.$ Thus $c'=0,$ and 
$$c_{\textbf{n}}=0,~\forall~\textbf{n}\in \Gamma _{2}.$$

By Eq.\eqref{eq:11.8}, we have
$$2\lambda\left(\textbf{m},\textbf{n}\right)d_{\textbf{m}+\textbf{n}}=
\lambda\left(\textbf{m},\textbf{n}\right)d_{\textbf{m}}+\lambda\left(\textbf{m},\textbf{n}\right)d_{\textbf{n}},~\forall~\textbf{m},\textbf{n}\in \Gamma _{2}.$$
By substituting  $b_{\textbf{m}}=0$ for all $\textbf{m}\in \Gamma$ into Eq.\eqref{eq:11.3}, we have
$$2\lambda\left(\textbf{m},\textbf{n}\right)a_{\textbf{m}+\textbf{n}}=
\lambda\left(\textbf{m},\textbf{n}\right)a_{\textbf{m}}+\lambda\left(\textbf{m},\textbf{n}\right)a_{\textbf{n}},~\forall~\textbf{m},\textbf{n}\in \Gamma _{2}.$$
Thus, by a similar argument as for $c_{\textbf{m}}(\textbf{m}\in \Gamma _{2})$, it can be proven that $d_{\textbf{m}}$ and $a_{\textbf{m}}$ are constants for all $\textbf{m}\in \Gamma _{2}$. Denote the two constants as $d$ and $a$, respectively.
By substituting $c_{\textbf{m}}=0$ for all $\textbf{m}\in \Gamma _{2}$ into Eq.\eqref{eq:11.5}, we have
$$2\lambda\left(\textbf{m},\textbf{n}\right)a_{\textbf{m}+\textbf{n}}=
\lambda\left(\textbf{m},\textbf{n}\right)d_{\textbf{m}}+\lambda\left(\textbf{m},\textbf{n}\right)a_{\textbf{n}},~\forall~ \textbf{m},\textbf{n}\in \Gamma _{2}.$$
Since for all $\textbf{m}\in \Gamma _{2}$, $d_{\textbf{m}}=d$, $a_{\textbf{m}}=a$, then we have
$$\lambda\left(\textbf{m},\textbf{n}\right)\left(a-d\right)=0,~\forall~\textbf{m},\textbf{n}\in \Gamma _{2}.$$
We can find a pair of $\left(\textbf{m},\textbf{n}\right)\in \Gamma_{2}\times \Gamma_{2}$ such that $\textbf{m}+\textbf{n}\in \Gamma_{2}$, it follows that $\lambda\left(\textbf{m},\textbf{n}\right)\not=0$  and $a=d$.

By substituting $d_{\textbf{m}}=d$ for all $\textbf{m}\in \Gamma_{2}$ into Eq.\eqref{eq:11.8}, we have
$$ \mathrm{det} \binom{\textbf{n}}{\textbf{m}}\left(d-d_{\textbf{m}}\right)=0,~\forall~\textbf{m}\in \Gamma_{1}, \forall~\textbf{n}\in \Gamma _{2}.$$
For all $\textbf{m}\in \Gamma_{1}$, we can find an $\textbf{n}\in \Gamma_{2}$ such that $\mathrm{det} \binom{\textbf{n}}{\textbf{m}}\not=0$, it follows that $$\forall\textbf{m}\in \Gamma_{1}, d_{\textbf{m}}=d=a.$$

By substituting  $d_{\textbf{m}}=d$ for all $\textbf{m}\in \Gamma_{1}$ into Eq.\eqref{eq:11.5}, we get
$$ \mathrm{det} \binom{\textbf{n}}{\textbf{m}}\left(2a_{\textbf{m}+\textbf{n}}-a_{\textbf{n}}-d\right)=0,~\forall~\textbf{m},\textbf{n}\in \Gamma_{1}.$$
Thus, by a similar argument as in Case 1 of Theorem \ref{E}, it can be proven that $$\forall~\textbf{n}\in \Gamma_{1}, a_{\textbf{n}}=d=a.$$

Combining all the analysis above, we get that there exist $a,c\in \mathbb C$ such that $\forall~ {\bf m}\in \Gamma,$
$$\varphi_{\bf 0} \left(x^{\bf m}\right)= a x^{\bf m},$$
$$\varphi_{\bf 0} \left(D\left({\bf m}\right)\right)=\left\{\begin{matrix}
  cx^{\bf m}+ aD\left({\bf m}\right), & \mbox{if}~ {\bf m}\in\Gamma_{1},\\
  aD\left({\bf m}\right),&\mbox{if}~ {\bf m}\in\Gamma_{2}.
\end{matrix}\right.$$

\textbf{Case 2.} $\textbf{i}\in \Gamma _{1}$. 

By Eq.\eqref{eq:11.3} we have
$$\mathrm{det}\binom{\textbf{n}}{\textbf{m}+\textbf{i}}b_{\textbf{m}}=0, ~\forall~ ({\bf m, n})\in \Gamma_1\times \Gamma_2.$$
We know that for all $\textbf{m}\in \Gamma _{1}\setminus \left \{-\textbf{i}\right\}$, there exists $\textbf{n}\in \Gamma _{2}$ such that $\mathrm{det}\binom{\textbf{n}}{\textbf{m}+\textbf{i}}\not=0$, we have
$$b_{\textbf{m}}=0,~\forall~\textbf{m}\in\Gamma _{1}\setminus \left \{-\textbf{i}\right\}.$$
Combining with Eq.\eqref{eq:650}, we get
$$b_{\textbf{m}}=0,~\forall~\textbf{m}\in\Gamma \setminus \left \{-\textbf{i}\right\}.$$

According to Eq.\eqref{eq:11.3}, Eq.\eqref{eq:11.7} and Eq.\eqref{eq:11.8}, we have $\forall\textbf{m},\textbf{n}\in \Gamma _{2}$,
$$\lambda\left(\textbf{m},\textbf{n}\right)\left(2a_{\textbf{m}+\textbf{n}}-a_{\textbf{m}}-a_{\textbf{n}}\right)=0.$$
$$\lambda\left(\textbf{m},\textbf{n}\right)\left(2c_{\textbf{m}+\textbf{n}}-c_{\textbf{m}}-c_{\textbf{n}}\right)=0.$$
$$\lambda\left(\textbf{m},\textbf{n}\right)\left(2d_{\textbf{m}+\textbf{n}}-d_{\textbf{m}}-d_{\textbf{n}}\right)=0.$$
Then by the same arguments as in Case 1 of Theorem \ref{F}, we know that $a_{\textbf{m}}$, $c_{\textbf{m}}$, $d_{\textbf{m}}$ are constants for all
$\textbf{m}\in \Gamma _{2}$, denote them as $a$, $c$, $d$, respectively.

According to Eq.\eqref{eq:11.8},  we have
\begin{equation}\mathrm{det}\binom{\textbf{n}-\textbf{i}}{\textbf{m}}
d=\mathrm{det}\binom{\textbf{n}}{\textbf{m}+\textbf{i}}d_{\textbf{m}},~\forall~\textbf{m}\in \Gamma _{1},\textbf{n}\in \Gamma _{2}\label{eq:5.012}.\end{equation}
Particularly, by taking $\textbf{m}= - \textbf{i}$ and $\textbf{n}\in \Gamma _{2}$ such that $\mathrm{det}\binom{\textbf{i}}{\textbf{n}}\not=0$, we obtain $d=0$. 
Thus
$$d_{\textbf{m}}=d=0,~\forall~\textbf{m}\in\Gamma _{2}.$$
Then by Eq.\eqref{eq:5.012}, we get $\forall\textbf{m}\in \Gamma _{1}$, $\forall\textbf{n}\in \Gamma _{2}$, $\mathrm{det}\binom{\textbf{n}}{\textbf{m}+\textbf{i}}d_{\textbf{m}}=0$. We know that for all $\textbf{m}\in \Gamma _{1}\setminus \left \{{\bf -i}\right\}$, there exists an $\textbf{n}\in \Gamma _{2}$ such that $\mathrm{det}\binom{\textbf{n}}{\textbf{m}+\textbf{i}}\not=0.$ This shows that $\forall~\textbf{m}\in \Gamma  _{1} \setminus \left \{{\bf -i}\right\}$, $d_{\textbf{m}}=0$.

In summary,  \begin{equation}d_{\textbf{m}}=0,~\forall~\textbf{m}\in \Gamma \setminus \left \{{\bf -i}\right\}.\label{eq:5.013}\end{equation}

According to Eq.\eqref{eq:11.7}, we get
$$2\mathrm{det}\binom{\textbf{n}}{\textbf{m}}c_{\textbf{m}+\textbf{n}}=
\mathrm{det}\binom{\textbf{n}+\textbf{i}}{\textbf{m}}
c_{\textbf{n}},~\forall~\textbf{m}\in \Gamma _{1},~\forall~\textbf{n}\in \Gamma _{2}.$$
Since for all $\textbf{m}\in \Gamma _{2}$, $c_{\textbf{m}}=c$, then we have $\forall~\textbf{m}\in \Gamma _{1}$, $\forall~\textbf{n}\in \Gamma _{2}$, $\mathrm{det}\binom{\textbf{n}-\textbf{i}}{\textbf{m}}
c=0$. We can find a pair of $\left(\textbf{m},\textbf{n}\right)\in\Gamma_{1}\times\Gamma _{2}$ such that $\mathrm{det}\binom{\textbf{n}-\textbf{i}}{\textbf{m}}
\not=0$, so, $c=0$. This shows that
\begin{equation}c_{\textbf{m}}=0,~\forall~\textbf{m}\in\Gamma _{2}\label{eq:5.014}.\end{equation}

By Eq.\eqref{eq:11.7}, we have
$$2\mathrm{det}\binom{\textbf{n}}{\textbf{m}}c_{\textbf{m}+\textbf{n}}=\mathrm{det}\binom{\textbf{n}}{\textbf{m}+\textbf{i}}c_{\textbf{m}}+
\mathrm{det}\binom{\textbf{n}+\textbf{i}}{\textbf{m}}c_{\textbf{n}},~\forall~\textbf{m},\textbf{n}\in \Gamma _{1}.$$
Write $\textbf{m}=(m_{1},m_{2})t$, $\textbf{n}=(n_{1},n_{2})t$, $\textbf{i}=(i_{1},i_{2})t$ with $(m_{1},m_{2}),(n_{1},n_{2})\in\mathbb{Z}^{2} \setminus \{{\bf 0}\}$, then we have
\begin{equation}2(m_{2}n_{1}-m_{1}n_{2})c_{\textbf{m}+\textbf{n}}=((m_{2}+i_{2})n_{1}-(m_{1}+i_{1})n_{2})
c_{\textbf{m}}+(m_{2}(n_{1}+i_{1})-m_{1}(n_{2}+i_{2}))
c_{\textbf{n}}
.\label{eq:9.20}\end{equation}
We need to consider the following two cases.

\textbf{1.}$(i_{1},i_{2})\in \left\{0\right\} \times \mathbb{Z}^{*}$ or $\mathbb{Z}^{*}\times\left\{0\right\} $. 

Without loss of generality, we suppose $(i_{1},i_{2})\in \left\{0\right\} \times \mathbb{Z}^{*}$.

By taking $(n_{1},n_{2})= \textbf{e}_{1}$  and $\textbf{e}_{2}$ in Eq.\eqref{eq:9.20}, respectively, we obtain

\begin{equation}\label{eq:9.21}
-2m_{2}c_{(\textbf{m}+\textbf{e}_{1})t}=-\left(m_{2}+i_{2}\right)c_{\textbf{m}t}+\left(m_{1}i_{2}-m_{2}\right)c_{\textbf{e}_{1}t},
~\forall ~ (m_{1},m_{2})\in\mathbb{Z}^{2}\setminus\{{\bf 0}\},\end{equation}
\begin{equation}\label{eq:9.22}
2m_{1}c_{(\textbf{m}+\textbf{e}_{2})t}=m_{1}c_{\textbf{m}}+m_{1}\left(1+i_{2}\right)c_{\textbf{e}_{2}t},~\forall~  (m_{1},m_{2})\in\mathbb{Z}^{2}\setminus\{{\bf 0}\}.\end{equation}
By taking $m_{2}= 0$ in Eq.\eqref{eq:9.21}, we get
$$i_{2}\left(c_{(m_{1},0)t}-m_{1}c_{\textbf{e}_{1}t}\right)=0,~\forall~m_{1}\in\mathbb{Z}^{*}.$$
Since $i_{2}\not=0$, we get
\begin{equation}c_{(m_{1},0)t}=m_{1}c_{\textbf{e}_{1}t},~\forall~m_{1}\in\mathbb{Z}^{*}.\label{eq:9.24}\end{equation}
By Eq.\eqref{eq:9.22}, we have
\begin{equation}2c_{(m_{1},m_{2}+1)t}=c_{\textbf{m}}+\left(1+i_{2}\right)c_{\textbf{e}_{2}t},~\forall ~ (m_{1},m_{2})\in\mathbb{Z}^{*}\times\mathbb{Z}.\label{eq:9.23}\end{equation}
For a fixed $m_{1}\in\mathbb{Z}^{*}$, treat $\left(c_{\textbf{m}}-\left(1+i_{2}\right)c_{\textbf{e}_{2}t}\right)_{m_{2}\in\mathbb{Z}}$ as a geometric sequence, then by Eq.\eqref{eq:9.23}, we have
\begin{equation*}c_{\textbf{m}}=\left( c_{(m_{1},0)t}-\left(1+i_{2}\right)c_{\textbf{e}_{2}t}\right)\left(\frac{1}{2}\right)^{m_{2}}+\left(1+i_{2}\right)c_{\textbf{e}_{2}t},
~\forall~ \textbf{m}\in\mathbb{Z}^{*}\times\mathbb{Z}.\label{eq:9.25} \end{equation*}
By substituting Eq.\eqref{eq:9.24} into the above equation, we get
\begin{equation}c_{\textbf{m}}=m_{1}\left(\frac{1}{2}\right)^{m_{2}}c_{\textbf{e}_{1}t}+\left(1+i_{2}\right)\left(1-\left(\frac{1}{2}\right)^{m_{2}}\right)c_{\textbf{e}_{2}t},
~\forall~  (m_{1},m_{2})\in\mathbb{Z}^{*}\times\mathbb{Z}.
\label{eq:9.26}\end{equation}
By substituting Eq.\eqref{eq:9.26} into Eq.\eqref{eq:9.20}, for those $({\bf m, n})\in (t\mathbb{Z})^{*}\times t\mathbb Z$,  such that ${\bf m+n}\in (t\mathbb{Z})^{*}\times t\mathbb Z$, we have for all $(m_{2},n_{2})\in\mathbb{Z}^{2}$,
\begin{eqnarray}\label{eq:9.27}
&&2\mathrm{det}\binom{\textbf{n}}{\textbf{m}}\left(\left(m_{1}+n_{1}\right)\left(\frac{1}{2}\right)^{m_{2}+n_{2}} c_{\textbf{e}_{1}t}+
\left(1+i_{2}\right)\left(1-\left(\frac{1}{2}\right)^{m_{2}+n_{2}}\right) c_{\textbf{e}_{2}t}\right)\\\nonumber
&=&
\mathrm{det}\binom{\textbf{n}}{\textbf{m}+\textbf{i}}\left(m_{1} \left(\frac{1}{2}\right)^{m_{2}}c_{\textbf{e}_{1}t}+\left(1+i_{2}\right)\left(1-\left(\frac{1}{2}\right)^{m_{2}}\right) c_{\textbf{e}_{2}t}\right)
\\ \nonumber
&&+\mathrm{det}\binom{\textbf{n}+\textbf{i}}{\textbf{m}}\left(n_{1}\left(\frac{1}{2}\right)^{n_{2}}c_{\textbf{e}_{1}t}+\left(1+i_{2}\right)\left(1-\left(\frac{1}{2}\right)^{n_{2}}\right) c_{\textbf{e}_{2}t}\right).\nonumber
\end{eqnarray}

Now we need to consider the two subcases: $i_{2}= -1$ and $i_{2}\ne -1$.

\textbf{Subcase (i).} $i_{2}= -1$. 

By Eq.\eqref{eq:9.27}, we have
\begin{equation}\label{eq:9.28}
(2\mathrm{det}\binom{\textbf{n}}{\textbf{m}}\left(m_{1}+n_{1}\right)\left(\frac{1}{2}\right)^{m_{2}+n_{2}}-
\mathrm{det}\binom{\textbf{n}}{\textbf{m}+\textbf{i}}m_{1}\left(\frac{1}{2}\right)^{m_{2}}-
\mathrm{det}\binom{\textbf{n}+\textbf{i}}{\textbf{m}}n_{1}\left(\frac{1}{2}\right)^{n_{2}})c_{\textbf{e}_{1}t}=0.\end{equation}
Since $m_{1} \not= 0,$ $n_{1} \not= 0$ and  $m_{1}+n_{1} \not= 0,$ so by setting $(m_{1},m_{2})=(2,2)$, $(n_{1},n_{2})=(1,1)$ in Eq\eqref{eq:9.28} , we have
$-\frac{1}{2}c_{\textbf{e}_{1}t}=0,$
then
$c_{\textbf{e}_{1}t}=0.$
By taking $i_{2}= -1$ in Eq.\eqref{eq:9.26} and then substituting $c_{\textbf{e}_{1}t}=0$ into Eq.\eqref{eq:9.26}, we get
$$c_{\textbf{m}}=0,~\forall  ~\textbf{m}\in(t\mathbb{Z})^{*}\times t\mathbb{Z}.$$
By taking $m_{1}= 1$, $(n_{1},n_{2})=(-1,0)$ in Eq\eqref{eq:9.20}, we get
$$-2m_{2}c_{(0,m_{2})t}=0,~\forall  ~m_{2}\in\mathbb{Z}$$
then
$$c_{(0,m_{2})t}=0,~\forall  ~m_{2}\in\mathbb{Z}^{*}.$$
Thus
$$c_{\textbf{m}}=0,~\forall~ \textbf{m}\in\Gamma_{1}.$$

\textbf{Subcase (ii).}  $i_{2}\not= -1$. 

By taking $(m_{1},m_{2})=(1,1)$, $(n_{1},n_{2})=(2,2)$ and $(m_{1},m_{2})=(1,1)$, $(n_{1},n_{2})=(-2,-2)$ into Eq\eqref{eq:9.27}, respectively, we have
$$\left\{\begin{matrix}
2c_{\textbf{e}_{1}t}+\left(1+i_{2}\right)c_{\textbf{e}_{2}t}=0,
 \\\frac{7}{2}c_{\textbf{e}_{1}t}+\left(1+i_{2}\right)c_{\textbf{e}_{2}t}=0.
\end{matrix}\right.$$
Solving the system of equations above, we get
\begin{equation*}c_{\textbf{e}_{1}t}=c_{\textbf{e}_{2}t}=0.\label{eq:9.93}\end{equation*}
By substituting $c_{\textbf{e}_{1}t}=0$ and $c_{\textbf{e}_{2}t}=0$ into Eq\eqref{eq:9.26}, we get
$$c_{\textbf{m}}=0,~\forall~ \textbf{m}\in (t\mathbb{Z})^{*}\times t\mathbb{Z}.$$
Then, by taking $m_{1} = 1$, $(n_{1},n_{2})=(-1,0)$ in Eq\eqref{eq:9.20}, we get
$$-2m_{2}c_{(0,m_{2})t}=0,~\forall  ~m_{2}\in\mathbb{Z},$$
then
$$c_{(0,m_{2})t}=0,~\forall  ~m_{2}\in\mathbb{Z}^{*}.$$
Thus
$$c_{\textbf{m}}=0,~\forall~ \textbf{m}\in\Gamma_{1}.$$

\textbf{2.} $(i_{1},i_{2})\in \mathbb{Z}^{*}\times \mathbb{Z}^{*}$. 

On the one hand, by setting $(n_{1},n_{2})=(1,0)$, $m_{2}= 0$ in Eq\eqref{eq:9.20}, we have
$$i_{2}\left(c_{(m_{1},0)t}-m_{1}c_{\textbf{e}_{1}t}\right)=0,~\forall  ~m_{1}\in\mathbb{Z}^{*}.$$
Since $i_{2}\not=0$, then we have
\begin{equation}c_{(m_{1},0)t}=m_{1}c_{\textbf{e}_{1}t},~ \forall  ~m_{1}\in\mathbb{Z}^{*}\label{eq:9.33}.\end{equation}
On the other hand, by taking $(n_{1},n_{2})=(0,1)$, $m_{1}= 0$ in Eq\eqref{eq:9.20}, we have
$$i_{1}\left(c_{(0,m_{2})t}-m_{2}c_{\textbf{e}_{2}t}\right)=0,~\forall  ~m_{2}\in\mathbb{Z}^{*}$$
Since $i_{1}\not=0$, then we have
\begin{equation}c_{(0,m_{2})t}=m_{2}c_{\textbf{e}_{2}t},~\forall  ~m_{2}\in\mathbb{Z}^{*}\label{eq:9.34}.\end{equation}
By taking  $n_{1} = 0$, $m_{2} = 0$ in Eq\eqref{eq:9.20}, we get
\begin{equation}2m_{1}n_{2}c_{(m_{1},n_{2})t}=n_{2}\left(m_{1}+i_{1}\right)c_{(m_{1},0)t}+m_{1} \left(n_{2}+i_{2} \right)c_{(0,n_{2})t},~\forall  ~m_{1},n_{2}\in\mathbb{Z}^{*}\label{eq:9.35}.\end{equation}
By substituting Eq.\eqref{eq:9.33} and Eq.\eqref{eq:9.34} into Eq.\eqref{eq:9.35}, we have
\begin{equation}2m_{1}n_{2}c_{(m_{1},n_{2})t}=n_{2}\left(m_{1}+i_{1}\right)m_{1}c_{\textbf{e}_{1}t}+m_{1}\left(n_2+i_{2}\right)n_{2}c_{\textbf{e}_{2}t},~\forall  ~m_{1},n_{2}\in\mathbb{Z}^{*}.\end{equation}
For all $m_{1}\in\mathbb{Z}^{*}$, $n_{2}\in\mathbb{Z}^{*}$, since $m_{1}n_{2}\not=0$, then
\begin{equation}2c_{(m_{1},n_{2})t}=\left(m_{1}+i_{1}\right)c_{\textbf{e}_{1}t}+\left(n_{2}+i_{2}\right)c_{\textbf{e}_{2}t},~\forall  ~m_{1},n_{2}\in\mathbb{Z}^{*}.\label{eq:9.36}\end{equation}
By substituting Eq.\eqref{eq:9.36} into Eq.\eqref{eq:9.20}, for those $\textbf{m},\textbf{n}\in(t\mathbb{Z})^{*}\times(t\mathbb{Z})^{*}$ such that ${\bf m+n}\in (t\mathbb{Z})^{*}\times(t\mathbb{Z})^{*}$, we have
\begin{eqnarray*}
&&2\mathrm{det}\binom{\textbf{n}}{\textbf{m}}\left(\left(m_{1}+n_{1}+i_{1}\right) c_{\textbf{e}_{1}t}+\left(m_{2}+n_{2}+i_{2}\right)c_{\textbf{e}_{2}t}\right)
\\
&=&\mathrm{det}\binom{\textbf{n}}{\textbf{m}+\textbf{i}}\left(\left(m_{1}+i_{1}\right)c_{\textbf{e}_{1}t}+\left(m_{2}+i_{2}\right)c_{\textbf{e}_{2}t}\right)
\\
&&+\mathrm{det}\binom{\textbf{n}+\textbf{i}}{\textbf{m}}\left(\left(n_{1}+i_{1}\right)c_{\textbf{e}_{1}t}+\left(n_{2}+i_{2}\right)c_{\textbf{e}_{2}t}\right),\end{eqnarray*}
Particularly, by setting $(m_{1},m_{2})=(-2i_{1},-i_{2})$, $(n_{1},n_{2})=(-i_{1},-i_{2})$ and $(m_{1},m_{2})=(-3i_{1},-i_{2})$, $(n_{1},n_{2})=(-i_{1},-i_{2})$ respectively
into the above equation, we have
$$\left\{\begin{matrix}
3i_{1}c_{\textbf{e}_{1}t}+2i_{2}c_{\textbf{e}_{2}t}=0, \\
4i_{1}c_{\textbf{e}_{1}t}+2i_{2}c_{\textbf{e}_{2}t}=0.
\end{matrix}\right.$$
Solving the system of equations above yields
$$c_{\textbf{e}_{1}t}=c_{\textbf{e}_{2}t}=0.$$
By substituting $c_{\textbf{e}_{1}t}=0$ and $c_{\textbf{e}_{2}t}=0$ into Eq.\eqref{eq:9.36}, we get
$$c_{(m_{1},n_{2})t}=0,~\forall ~(m_{1},n_{2})\in\mathbb{Z}^{*}\times \mathbb{Z}^{*}.$$
i.e.
\begin{equation*}c_{\textbf{m}}=0,~\forall~ \textbf{m}\in(t\mathbb{Z})^{*}\times (t\mathbb{Z})^{*}.\end{equation*}
By substituting $c_{\textbf{e}_{1}t}=0$ into Eq.\eqref{eq:9.33}, we get
\begin{equation*}c_{\bf m}=0,~\forall~ {\bf m}\in(t\mathbb{Z})^*\times \{0\}.\end{equation*}
By substituting $c_{\textbf{e}_{2}t}=0$ into Eq.\eqref{eq:9.34}, we get
\begin{equation*}
c_{\bf m}=0,~\forall~ {\bf m}\in\{0\}\times(t\mathbb{Z})^*.\end{equation*}
Thus,
$$c_{\textbf{m}}=0,~\forall~\textbf{m}\in \Gamma_{1}.$$

By substituting Eq.\eqref{eq:5.013} and Eq.\eqref{eq:5.014} into Eq.\eqref{eq:11.5}, we have
$$2\lambda\left(\textbf{m},\textbf{n}\right)a_{\textbf{m}+\textbf{n}}=\lambda\left(\textbf{m},\textbf{n}\right)a_{\textbf{n}},~\forall~\textbf{m},\textbf{n}\in\Gamma _{2}.$$
We know that for all $\textbf{n}\in \Gamma _{2}$, there exists $\textbf{m}\in \Gamma _{2}$ such that $\textbf{m}+\textbf{n}\in \Gamma _{2}$, then $\lambda\left(\textbf{m},\textbf{n}\right)\not=0$. In addition, for all $\textbf{n}\in \Gamma _{2}$, $a_{\textbf{n}}=a=0$, it follows that $a=0$. Thus
$$a_{\textbf{m}}=0,~\forall~\textbf{m}\in\Gamma _{2}.$$

Now, by Eq.\eqref{eq:11.5}, we have
$$2\mathrm{det}\binom{\textbf{n}}{\textbf{m}}
a_{\textbf{m}+\textbf{n}}=\mathrm{det}\binom{\textbf{n}+\textbf{i}}{\textbf{m}}
a_{\textbf{n}},~\forall~\textbf{m},\textbf{n}\in\Gamma_{1}.$$
We write $\textbf{m}=(m_1, m_2)t$, $\textbf{n}=(n_1, n_2)t$, $\textbf{i}=(i_1,i_2)t$ with $(m_1, m_2), (n_1, n_2)\in\mathbb{Z}^{2} \setminus \{{\bf 0}\}$, then we have
\begin{equation}\label{eq:101}
2\left(m_{2}n_{1}-m_{1}n_{2}\right)a_{\textbf{m+n}}=
\left(m_{2}\left(n_{1}+i_{1}\right)-m_{1}\left(n_{2}+i_{2}\right)\right)a_{\textbf{n}},~\forall~(m_1, m_2), (n_1, n_2)\in\mathbb Z^2\setminus\{{\bf 0}\}.\end{equation}
Since $\left(i_{1},i_{2}\right)\not=\left(0,0\right)$, without loss of generality, we suppose $i_{2}\not=0$. By taking $(m_1, m_2)=(1,0)$ and $m_2=0$ in Eq\eqref{eq:101}, respectively, we have
\begin{equation}\label{eq:102}
-2n_{2}a_{(n_{1}+1,n_{2})t}=-\left ( n_{2}+i_{2} \right ) a_{(n_{1},n_{2})t}, ~\forall ~(n_1, n_2)\in \mathbb Z^2\setminus\{{\bf 0}\}\end{equation}
\begin{equation}\label{eq:103}
-2n_{2}a_{(n_{1}+m_1,n_{2})t}=-\left ( n_{2}+i_{2} \right ) a_{(n_{1},n_{2})t}, ~\forall ~(n_1, n_2)\in \mathbb Z^2\setminus\{{\bf 0}\}, \forall~ m_1\in \mathbb Z^*.\end{equation}
Then, by Eq.\eqref{eq:102}, we have 
 \begin{equation}\label{eq:4.41}
     a_{(n_{1},0)t}=0, ~\forall ~n_1\in\mathbb Z^*.
 \end{equation}
By Eq.\eqref{eq:102} and Eq.\eqref{eq:103}, we have
\begin{equation}\label{eq:104}
a_{(n_{1}+1,n_{2})t}=a_{(n_{1}+m_1,n_{2})t}, ~\forall ~(n_1,n_2)\in \mathbb Z\times \mathbb Z^*, \forall ~m_1\in \mathbb Z^*.\end{equation}
By Eq.\eqref{eq:102} and Eq.\eqref{eq:104}, we have
$$\left ( n_{2}-i_{2} \right )a_{(n_1,n_{2})t}=0, ~\forall (n_1, n_2)\in \mathbb Z\times\mathbb Z^*.$$
It follows that \begin{equation}\label{eq:4.43}
a_{(n_1, n_{2})t}=0, ~\forall ~(n_1, n_2)\in \mathbb Z\times (\mathbb Z^*\setminus\{i_2\}).\end{equation}
Particularly,  $$\forall n_1\in\mathbb Z,~ a_{(n_1, i_2+1)t}=0.$$ 
By taking $(m_1, m_2)=(0,1), n_2=i_2$ and $m_1=0, n_2=i_2$ in Eq.\eqref{eq:101}, respectively,
 we have
\begin{equation*}
2n_{1}a_{(n_{1}, i_{2}+1)t}=\left ( n_{1}+i_{1} \right ) a_{(n_{1}, i_{2})t}, ~\forall~n_1\in\mathbb Z.
\end{equation*}
\begin{equation*}
    2n_1a_{(n_1, m_2+i_2)t}=(n_1+i_1)a_{(n_1, i_2)t}, ~\forall ~ m_2\in\mathbb Z^*, ~\forall~ n_1\in\mathbb Z
\end{equation*}
It follows that \begin{equation}\label{eq:4.44}
    a_{(n_1, i_2)t}=0, ~\forall ~n_1\in\mathbb Z\setminus\{-i_1\}~ \mbox{and} ~ a_{(-i_{1}, i_{2})t}=a_{(-i_{1}, i_{2}+1)t}=0.
\end{equation}
Combining Eq.\eqref{eq:4.41}, Eq.\eqref{eq:4.43} and Eq.\eqref{eq:4.44}, we obtain $a_{\bf n}=0, ~\forall~ n\in\Gamma_1.$

In summary,
$$a_{\textbf{n}}=0,~\forall~\textbf{n}\in \Gamma.$$

Combining all the results above, we proved that for all $\textbf{i}\in \Gamma _{1}$, $\varphi_{\textbf{i}}=0$.

\textbf{Case 3.} $\textbf{i}\in \Gamma _{2}$. 

By Eq.\eqref{eq:11.6}, we have
$$2\lambda\left(\textbf{m},\textbf{n}\right)b_{\textbf{m}+\textbf{n}}=\lambda\left(\textbf{m},\textbf{n}+\textbf{i}\right)b_{\textbf{n}},~\forall~\textbf{n}\in \Gamma _{1},\textbf{m}\in \Gamma _{2}.$$
Since for ${\bf m}\in \Gamma_1, {\bf n}\in \Gamma_2,$ $\textbf{m}+\textbf{n}\in \Gamma _{2}$, then by Eq.\eqref{eq:650}, we know $b_{\textbf{m+n}}=0.$ It follows that for all $\textbf{n}\in \Gamma _{1}$, there exists $\textbf{m}\in \Gamma _{2}$ such that $\textbf{m}+\textbf{n}+\textbf{i}\in \Gamma _{2}$, so that $\lambda\left(\textbf{m},\textbf{n}+\textbf{i}\right)\not=0.$ This shows that
$$b_{\textbf{n}}=0,~\forall~\textbf{n}\in \Gamma _{1}.$$
Combining with Eq.\eqref{eq:650}, we have
$$b_{\textbf{m}}=0,~\forall~\textbf{m}\in \Gamma\setminus\left\{-\textbf{i}\right\}.$$

Substituting $b_{\textbf{m}}=0$ for all ${\bf m}\in \Gamma\setminus\{{\bf -i}\}$ into Eq.\eqref{eq:11.3}, we have
$$2\lambda\left(\textbf{m},\textbf{n}\right)a_{\textbf{m}+\textbf{n}}=
\lambda\left(\textbf{m}+\textbf{i},\textbf{n}\right)a_{\textbf{m}}+\lambda\left(\textbf{m},\textbf{n}+\textbf{i}\right)a_{\textbf{n}},
~\forall~\textbf{m},\textbf{n}\in\Gamma .$$
Then by the proof of Case 2 and Case 3 of Theorem \ref{F}, we get
$$a_{\textbf{m}}=0,~\forall~\textbf{m}\in \Gamma.$$

By Eq.\eqref{eq:11.7}, we have
$$2\lambda\left(\textbf{m},\textbf{n}\right)c_{\textbf{m}+\textbf{n}}=
\lambda\left(\textbf{m}+\textbf{i},\textbf{n}\right)c_{\textbf{m}}+
\lambda\left(\textbf{m},\textbf{n}+\textbf{i}\right)c_{\textbf{n}},~\forall~\textbf{m},\textbf{n}\in \Gamma _{2},$$
Then by a similar argument as in Case 2 and Case 3 of Theorem \ref{F}, we can prove
$$c_{\textbf{m}}=0,~\forall~\textbf{m}\in \Gamma_{2}.$$
Substituting $a_{\textbf{m}}=0,~\forall~\textbf{m}\in \Gamma$ and $c_{\textbf{m}}=0,~\forall~\textbf{m}\in \Gamma_{2}$ into Eq.\eqref{eq:11.5}, we have
$$d_{\textbf{m}}h\left(\textbf{m}+\textbf{i},\textbf{n}\right)=0, ~\forall~{\bf (m, n)}\in \Gamma_2\times\Gamma.$$
There exist an $\textbf{n}\in \Gamma$ such that $h\left(\textbf{m}+\textbf{i}, {\bf n}\right)\not=0$, then
$$d_{\textbf{m}}=0,~\forall~\textbf{m}\in \Gamma_{2}.$$

By Eq.\eqref{eq:11.7}, we have
$$2\mathrm{det} \binom{\textbf{n}}{\textbf{m}}c_{\textbf{m}+\textbf{n}}=
\mathrm{det} \binom{\textbf{n}}{\textbf{m}+\textbf{i}}c_{\textbf{m}}+
\mathrm{det} \binom{\textbf{n}+\textbf{i}}{\textbf{m}}c_{\textbf{n}},\forall\textbf{m},\textbf{n}\in \Gamma _{1},$$
Write $\textbf{m}=(m_{1},m_{2})t$, $\textbf{n}=(n_{1},n_{2})t$, $\textbf{i}=(i_1, i_2)$ with $(m_{1},m_{2}),(n_{1},n_{2})\in\mathbb{Z}^{2} \setminus\{{\bf 0}\}$, then we have
\begin{equation}\label{eq:9.2011}
2(m_{2}n_{1}-m_{1}n_{2})c_{\textbf{m}+\textbf{n}}=((m_{2}+\frac{i_{2}}{t})n_{1}-(m_{1}+\frac{i_{1}}{t})n_{2})
c_{\textbf{m}}+(m_{2}(n_{1}+\frac{i_{1}}{t})-m_{1}(n_{2}+\frac{i_{2}}{t}))
c_{\textbf{n}}
.\end{equation}
Then by a similar argument as $c_{\bf m}=0$ for ${\bf m}\in\Gamma_1$ in Case 2, we can prove $c_{\bf m}=0, ~\forall~ {\bf m}\in\Gamma_1.$  
By \eqref{eq:11.8}, we have
$$2\mathrm{det} \binom{\textbf{n}}{\textbf{m}}d_{\textbf{m}+\textbf{n}}=
\mathrm{det} \binom{\textbf{n}}{\textbf{m}+\textbf{i}}d_{\textbf{m}}+
\mathrm{det} \binom{\textbf{n}+\textbf{i}}{\textbf{m}}d_{\textbf{n}},~\forall~\textbf{m},\textbf{n}\in \Gamma _{1}.$$
Thus, by a similar argument as $c_{\textbf{m}}=0$ for ${\bf m}\in\Gamma_1$, we get
$$d_{\textbf{m}}=0,~\forall~\textbf{m}\in \Gamma_{1}.$$

Combining all the results above, we proved that for all $\textbf{i}\in \Gamma _{2}$, $\varphi_{\textbf{i}}=0$.

Hence combining the analysis of the three cases above, we obtain the desired result.
\end{proof}

\begin{theorem}
There are no non-trivial transposed Poisson algebra structures defined on the algebra $L(q)$.
\end{theorem}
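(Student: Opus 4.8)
The strategy is to mimic exactly the argument just used for $\tilde{L(q)}$, now powered by the stronger structural input of Theorem \ref{H}. Let $\left(L(q),\cdot,[\cdot,\cdot]\right)$ be a transposed Poisson algebra. By Lemma \ref{Q}, for each homogeneous basis element the left multiplication operator is a $\frac12$-derivation of $L(q)$: write $\varphi_{x^{\bf m}}$ for multiplication by $x^{\bf m}$ and $\varphi_{D({\bf m})}$ for multiplication by $D({\bf m})$. By Theorem \ref{H}, each of these $\frac12$-derivations has the special diagonal-plus-shift form controlled by two scalars; so there are scalars $a_{x^{\bf m}}, c_{x^{\bf m}}$ and $a_{D({\bf m})}, c_{D({\bf m})}$ such that, for every ${\bf n}\in\Gamma$,
\begin{align*}
x^{\bf m}\cdot x^{\bf n} &= a_{x^{\bf m}}\,x^{\bf n}, &
x^{\bf m}\cdot D({\bf n}) &= \begin{cases} c_{x^{\bf m}}x^{\bf n}+a_{x^{\bf m}}D({\bf n}), & {\bf n}\in\Gamma_1,\\ a_{x^{\bf m}}D({\bf n}), & {\bf n}\in\Gamma_2,\end{cases}\\
D({\bf m})\cdot x^{\bf n} &= a_{D({\bf m})}\,x^{\bf n}, &
D({\bf m})\cdot D({\bf n}) &= \begin{cases} c_{D({\bf m})}x^{\bf n}+a_{D({\bf m})}D({\bf n}), & {\bf n}\in\Gamma_1,\\ a_{D({\bf m})}D({\bf n}), & {\bf n}\in\Gamma_2.\end{cases}
\end{align*}

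Next I would exploit commutativity of $\cdot$ by comparing the two expressions for the same product with ${\bf m}\neq{\bf n}$. Comparing $x^{\bf m}\cdot x^{\bf n}=a_{x^{\bf m}}x^{\bf n}$ with $x^{\bf n}\cdot x^{\bf m}=a_{x^{\bf n}}x^{\bf m}$ and using linear independence of $x^{\bf m},x^{\bf n}$ forces $a_{x^{\bf m}}=a_{x^{\bf n}}=0$ for all ${\bf m}\neq{\bf n}$; since $\Gamma$ has more than one element, $a_{x^{\bf m}}=0$ for all ${\bf m}$. The same comparison for $D({\bf m})\cdot x^{\bf n}$ versus $x^{\bf n}\cdot D({\bf m})$ (picking ${\bf n}\in\Gamma_2$, ${\bf m}$ arbitrary, ${\bf m}\neq{\bf n}$) yields $a_{D({\bf m})}=c_{x^{\bf n}}=0$; running the same kind of comparison with the roles of $x$ and $D$ swapped and, when needed, choosing the free index in $\Gamma_1$ versus $\Gamma_2$, kills $c_{D({\bf m})}$ and $c_{x^{\bf m}}$ as well. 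Thus every structure constant vanishes, so $u\cdot v=0$ for all $u,v\in L(q)$, and the transposed Poisson structure is trivial.

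The only mildly delicate point is bookkeeping: Theorem \ref{H} gives different formulas depending on whether the argument of the map lies in $\Gamma_1$ or $\Gamma_2$, so when I invoke commutativity I must be careful to pick the free index ${\bf n}$ (and the fixed one ${\bf m}$) in the right pieces of $\Gamma$ so that the relevant basis vectors are genuinely linearly independent and not accidentally proportional (e.g. avoiding the degenerate case ${\bf m}={\bf n}$, and being mindful that $L({\bf m})$-type elements with ${\bf m}\in\Gamma_1$ are central). Since $\Gamma_1$ and $\Gamma_2$ are both infinite, such choices always exist, so this causes no real obstruction; the argument is essentially a shorter replay of the generic case $\tilde{L(q)}$, with the extra bracket type $[D,D]$ handled in exactly the same way. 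Hence the result follows.
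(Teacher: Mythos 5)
Your proposal is correct and follows essentially the same route as the paper: invoke Lemma \ref{Q} to view each left multiplication as a $\frac{1}{2}$-derivation, apply Theorem \ref{H} to write the products via the scalars $a$ and $c$ (with the $\Gamma_{1}$/$\Gamma_{2}$ case split), and then use commutativity with ${\bf m}\neq{\bf n}$ plus linear independence of the basis elements to force all coefficients to vanish. The paper's proof is just the explicit tabulation of the very comparisons you outline, so no gap remains.
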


\begin{proof}
Let $\left( L(q),\cdot,\left[\cdot ,\cdot \right]\right)$ be a transposed Poisson algebra, then $\left( L(q),\cdot\right)$ is a commutative and associative algebra which satisfies Eq.\eqref{eq: 2.1}. For all ${\bf m}\in \Gamma$, we denote the left multiplication by $x^{\textbf{m}}$ or $D({\textbf m})$ in $\left( L(q),\cdot\right)$ as $\varphi _{x^{\textbf{m}}}$ or $\varphi _{D({\textbf m})}$.

Now we consider the commutativity:

 According to Theorem \ref{H}, for  all $\textbf{m},\textbf{n}\in\Gamma$,
$$x^{\textbf{n}}\cdot x^{\textbf{m}}=\varphi _{x^{\textbf{n}}}(x^{\textbf{m}})=a_{x^{\textbf{n}}}x^{\textbf{m}}, $$
$$x^{\textbf{m}}\cdot x^{\textbf{n}}=\varphi _{x^{\textbf{m}}}(x^{\textbf{n}})=a_{x^{\textbf{m}}}x^{\textbf{n}}.$$
For all $\textbf{m},\textbf{n}\in\Gamma_{1}$:
$$D\left(\textbf{n}\right)\cdot D\left(\textbf{m}\right)=\varphi _{D\left(\textbf{n}\right)}(D\left(\textbf{m}\right))=c_{D\left(\textbf{n}\right)}x^{\textbf{m}}+a_{D\left(\textbf{n}\right)}D\left(\textbf{m}\right), $$
$$D\left(\textbf{m}\right)\cdot D\left(\textbf{n}\right)=\varphi _{D\left(\textbf{m}\right)}(D\left(\textbf{n}\right))=c_{D\left(\textbf{m}\right)}x^{\textbf{n}}+a_{D\left(\textbf{m}\right)}D\left(\textbf{n}\right).$$
For all $\textbf{m},\textbf{n}\in\Gamma_{2}$:
$$D\left(\textbf{n}\right)\cdot D\left(\textbf{m}\right)=\varphi _{D\left(\textbf{n}\right)}(D\left(\textbf{m}\right))=a_{D\left(\textbf{n}\right)}D\left(\textbf{m}\right), $$
$$D\left(\textbf{m}\right)\cdot D\left(\textbf{n}\right)=\varphi _{D\left(\textbf{m}\right)}(D\left(\textbf{n}\right))=a_{D\left(\textbf{m}\right)}D\left(\textbf{n}\right).$$
For all $\textbf{m}\in\Gamma_{1},\textbf{n}\in\Gamma_{2}$:
$$D\left(\textbf{n}\right)\cdot D\left(\textbf{m}\right)=\varphi _{D\left(\textbf{n}\right)}(D\left(\textbf{m}\right))=c_{D\left(\textbf{n}\right)}x^{\textbf{m}}+a_{D\left(\textbf{n}\right)}D\left(\textbf{m}\right), $$
$$D\left(\textbf{m}\right)\cdot D\left(\textbf{n}\right)=\varphi _{D\left(\textbf{m}\right)}(D\left(\textbf{n}\right))=a_{D\left(\textbf{m}\right)}D\left(\textbf{n}\right).$$
For all $\textbf{m}\in\Gamma,\textbf{n}\in\Gamma_{1}$:
$$D\left(\textbf{n}\right)\cdot x^{\textbf{m}}=\varphi _{D\left(\textbf{n}\right)}(x^{\textbf{m}})=a_{D\left(\textbf{n}\right)}x^{\textbf{m}} ,$$
$$ x^{\textbf{m}}\cdot D\left(\textbf{n}\right)=\varphi _{ x^{\textbf{m}}}(D\left(\textbf{n}\right))=c_{ x^{\textbf{m}}}x^{\textbf{n}}+a_{ x^{\textbf{m}}}D\left(\textbf{n}\right).$$
For all $\textbf{m}\in\Gamma,\textbf{n}\in\Gamma_{2}$:
$$D\left(\textbf{n}\right)\cdot x^{\textbf{m}}=\varphi _{D\left(\textbf{n}\right)}(x^{\textbf{m}})=a_{D\left(\textbf{n}\right)}x^{\textbf{m}} ,$$
$$ x^{\textbf{m}}\cdot D\left(\textbf{n}\right)=\varphi _{ x^{\textbf{m}}}(D\left(\textbf{n}\right))=a_{ x^{\textbf{m}}}D\left(\textbf{n}\right).$$

Choosing $\textbf{m}\not=\textbf{n}$ in the above equations, we can deduce
$a_{x^{\textbf{m}}}=a_{ x^{\textbf{n}}}=a_{D\left(\textbf{m}\right)}=a_{D\left(\textbf{n}\right)}=c_{D\left(\textbf{m}\right)}=c_{D\left(\textbf{n}\right)}=c_{ x^{\textbf{m}}}=0$. This leads to $x\cdot y =\varphi _{x}(y)=0,$  for all $x,y\in L(q)$, implying that the transposed Poisson structure on $L(q)$ is trivial.
\end{proof}

\backmatter





\bmhead{Acknowledgements}

The work is supported by the NSFC (12201624).

\bibliography{sn-bibliography}

\end{document}